\newcommand{\version}{}
\theoremstyle{plain}
\newtheorem{thm}{Theorem}[section]
\newtheorem*{thm*}{Theorem}
\newaliascnt{prop}{thm}
\newaliascnt{cor}{thm}
\newaliascnt{lem}{thm}
\newaliascnt{claim}{thm}
\newaliascnt{defn}{thm}
\newaliascnt{ques}{thm}
\newaliascnt{conj}{thm}
\newaliascnt{fact}{thm}
\newaliascnt{rem}{thm}
\newaliascnt{ex}{thm}
\newtheorem{prop}[prop]{Proposition}
\newtheorem{cor}[cor]{Corollary}
\newtheorem{lem}[lem]{Lemma}
\newtheorem*{prop*}{Proposition}
\newtheorem*{cor*}{Corollary}
\newtheorem*{lem*}{Lemma}
\newtheorem*{claim*}{Claim}
\theoremstyle{definition}
\newtheorem{defn}[defn]{Definition}
\newtheorem*{defn*}{Definition}
\newtheorem*{ques*}{Question}
\newtheorem*{conj*}{Conjecture}
\newtheorem*{prob*}{Problem}
\newtheorem{rem}[rem]{Remark}
\newtheorem{ex}[ex]{Example}
\newtheorem*{fact*}{Fact}
\newtheorem*{rem*}{Remark}
\newtheorem*{ex*}{Example}
\def\textsectionN~{\textsection{}}
\renewcommand\phi{\varphi}
\renewcommand\epsilon{\varepsilon}
\renewcommand\leq{\leqslant}
\renewcommand\geq{\geqslant}
\newcommand{\set}{%
  \@ifstar{\@setstar}{\@set}%
}%
\newcommand{\@setstar}[2]{\{\, #1 \mid #2 \,\}}
\newcommand{\@set}[1]{\{\, #1 \,\}}
\newcommand{\lin}[1]{\langle\, #1 \,\rangle}
\newcommand{\trans}[1][1]{\raisebox{#1ex}{\scriptsize\kern0.1em$t$\kern-0.1em}}
\newcommand{\PP}{\mathbb{P}}
\newcommand{\cP}{{\PP_{\!\! *}}}
\newcommand{\TT}{\mathbb{T}}
\newcommand{\PN}{\PP^N}
\newcommand{\Pv}[1][N]{(\PP^{#1})\spcheck}
\newcommand{\sO}{\mathscr{O}}
\newcommand{\X}{\mathcal{X}}
\newcommand{\Y}{\mathcal{Y}}
\newcommand{\bpi}{\bar\pi}
\newcommand{\Mm}{M^{-}}
\newcommand{\mpl}{m^{+}}
\newcommand{\mm}{m^{-}}
\newcommand{\Qp}{Q_{+}}
\newcommand{\Sp}{S_{+}}
\newcommand{\Gr}{\mathbb{G}}
\newcommand{\Gm}{\Gr(m,\PN)}
\newcommand{\Gmz}{\Gr(m_0,\PN)}
\newcommand{\Gmp}{\Gr(\mpl,\PN)}
\newcommand{\GM}{\Gr(M,\PN)}
\newcommand{\GMm}{\Gr(\Mm,\PN)}
\newcommand{\Go}{\mathbb{G}\spcirc}
\newcommand{\Gom}{\Go_{m}}
\newcommand{\Gomp}{\Go_{\mpl}}
\newcommand{\Xo}{\X\spcirc}
\newcommand\xo{x_o}
\newcommand{\sQ}{\mathscr{Q}}
\newcommand{\sS}{\mathscr{S}}
\newcommand{\sHom}{\mathscr{H}om}
\newcommand\spcirc{^\circ}
\newcommand\RNi[1][i]{0 \leq #1 \leq m}
\newcommand\RNj{m+1 \leq j \leq N}
\newcommand\RNij{\RNi,\RNj}
\newcommand\RNe{1 \leq e \leq \dim(\X)}
\newcommand\RNmu[1][\mu]{\mpl+1 \leq #1 \leq N}
\newcommand\RNnu{m+1 \leq \nu \leq \mpl}
\newcommand\RNlambda{0 \leq \lambda \leq \mpl}
\newcommand\Step[1]{\par\emph{Step #1.}}
\newcommand{\textgene}[1]{\ \ \text{#1}\ \,}
\newcommand{\textand}{\textgene{and}}
\DeclareMathOperator{\rk}{rk}%
\DeclareMathOperator{\codim}{codim}%
\DeclareMathOperator{\Hom}{Hom}%
\DeclareMathOperator{\im}{im}%
\DeclareMathOperator{\Proj}{Proj}
\DeclareMathOperator{\Spec}{Spec}
\DeclareMathOperator{\Tan}{Tan}
\title[Duality with expanding maps and shrinking maps\version]%
{Duality with \\ expanding maps and shrinking maps, and \\ its applications to Gauss maps\version}
\email{katu@toki.waseda.jp}
\author[K.~Furukawa]{Katsuhisa~FURUKAWA}
\urladdr{\url{http://www.aoni.waseda.jp/katu/index.html}}
\address{
  Department of Mathematics,
  School of Fundamental Science and Engineering,
  Waseda~University,
  Ohkubo~3-4-1, Shinjuku, Tokyo, 169-8555, Japan
}
\subjclass[2000]{Primary 14N05; Secondary 14M15}
\keywords{Gauss map, separable, birational}
\date{April 8, 2013}
\begin{document}

\maketitle

\begin{abstract}
  We study {expanding maps} and {shrinking maps} of subvarieties of Grassmann varieties
  in arbitrary characteristic.
  The shrinking map
  was studied independently
  by Landsberg and Piontkowski in order to characterize Gauss images.
  To develop their method, we introduce
  the expanding map,
  which is a dual notion of the shrinking map
  and is a generalization of the Gauss map.
  Then we give a characterization of separable Gauss maps and their images,
  which yields results for the following topics:
  (1) Linearity of general fibers of separable Gauss maps;
  (2) Generalization of the characterization of Gauss images;
  (3) Duality on one-dimensional parameter spaces of linear subvarieties lying in developable varieties.
\end{abstract}

\section{Introduction}
\label{sec:introduction}

For a projective variety $X \subset \PN$
over an algebraically closed field of arbitrary characteristic,
the \emph{Gauss map} $\gamma = \gamma_X$ of $X$
is defined to be the rational map
$X \dashrightarrow \Gr(\dim X, \PN)$
which sends each smooth point $x$ to the embedded tangent space $\TT_xX$ at $x$ in $\PN$.
The \emph{shrinking map} of a subvariety of a Grassmann variety
was studied independently by Landsberg and Piontkowski
in order to characterize \emph{Gauss images} in characteristic zero,
around 1996 according to \cite[p. 93]{IL}
(see \cite[2.4.7]{FP} and \cite[Theorem~3.4.8]{IL} for details of their results).
To develop their method,
we introduce the \emph{expanding map} of a subvariety of a Grassmann variety,
which is a generalization of the Gauss map
and is a dual notion of the shrinking map
(see \autoref{sec:expa-map-subv} for precise definitions of these maps).
Then we have the main theorem, \autoref{thm:mainthm-2},
which is a characterization of separable Gauss maps and their images in arbitrary characteristic,
and which yields results for the following topics.

\subsection{Linearity of general fibers of separable Gauss maps}
\begin{thm}[= \autoref{cor-mainthm}]
  \label{mainthm}
  Let $\gamma$ be a separable Gauss map of a projective variety $X \subset \PN$.
  Then the closure of
  a general fiber of $\gamma$ is a linear subvariety of $\PN$
  \footnote{  In the first version of this paper (arXiv:1110.4841v1),
    we proved the linearity
    by using duality on subspaces of tangent spaces
    with linear projection techniques.
    After that, we have examined what is essential in the proof.
    In the second version, considering expanding maps and shrinking maps,
    we give a more evident proof of the result.
  }.
\end{thm}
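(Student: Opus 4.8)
The plan is to deduce the statement from the main characterization theorem, \autoref{thm:mainthm-2}, which describes a separable Gauss map $\gamma$ of $X \subset \PN$ in terms of the expanding map of the image $Y = \gamma(X)$ and the shrinking map dual to it. The elementary starting point is that, for a general $[T] \in Y$, the closure $F$ of the fibre $\gamma^{-1}([T])$ is contained in the fixed linear subvariety $T \subset \PN$: each point lies in its own embedded tangent space, and $\TT_x X = T$ is constant along the fibre by definition. The whole problem is therefore to show that this subvariety $F$ of the fixed $T$ is itself linear, and the natural candidate for $F$ is the focal subvariety of the tautological family $\set*{T_y}{y \in Y}$ of tangent spaces, equivalently the linear subvariety that the shrinking map attaches to $[T]$.

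To make this concrete I would set up a local moving frame for the tautological bundle over $Y$, writing $T_s = \lin{v_0(s),\dots,v_n(s)}$ for local parameters $s$ on $Y$, and consider the map $u\colon (s,a) \mapsto \sum_i a_i v_i(s)$ onto $\Tan(X)$. A point $p = u(s,a)$ of $T_s$ lies in $F$ exactly when $\TT_p X = T_s$. Differentiating the frame relations in the $s$-directions turns this tangency condition into a system of \emph{linear} equations in the fibre coordinates $a$, whose coefficients are the entries of the (prolonged) second fundamental form of $X$; the fibre directions are precisely those annihilated by this form. Hence $F$ is a linear section of $T$, and the closure of a general fibre is a linear subvariety, as claimed. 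Phrased in the language of the paper, this is the assertion that $F$ coincides with the linear output of the shrinking map at $[T]$, which is the form in which I expect \autoref{thm:mainthm-2} to deliver it.

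The separability hypothesis enters decisively at the differentiation step, and that is where I expect the main obstacle to lie. In positive characteristic the derivatives of the moving frame along the fibre directions can vanish for reasons unrelated to genuine flatness, so the linear equations need not describe the fibre scheme-theoretically, and the fibre can genuinely fail to be linear for an inseparable Gauss map. The real work is thus to show that, under separability, the set-theoretic fibre coincides with the focal/shrinking-map subvariety rather than being a proper or thickened subset of it. I would use separability to guarantee that $d\gamma$ has the expected rank on a dense open set and that the fibre directions truly lie in the kernel of the prolonged second fundamental form — precisely the infinitesimal rigidity encoded in the expanding map — so that the reconstruction of $X$ from $Y$ furnished by \autoref{thm:mainthm-2} exhibits the general fibre as the intended linear subvariety.
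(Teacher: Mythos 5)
Your high-level plan --- reduce to \autoref{thm:mainthm-2} and identify the closure $F$ of the fibre over $[T]$ with the plane $\sigma([T])$ produced by the shrinking map --- is indeed the paper's route (\autoref{cor-mainthm} is extracted from the implication (a) $\Rightarrow$ (b), i.e.\ \autoref{thm:mainthm-2a}). But the argument you actually supply for that identification has a genuine gap, and it sits exactly where you sense trouble. Your moving-frame computation, even granted verbatim, proves only one inclusion: a smooth point $p \in X \cap T$ with $\TT_p X = T$ satisfies the linearized tangency equations, so $F$ is \emph{contained} in a linear subspace $L \subset T$. It does not show that $L \subset X$, nor that a general point of $L$ is a smooth point of $X$ with tangent space $T$; without that reverse inclusion, ``$F$ is a linear section of $T$'' does not follow --- $F$ is merely a subvariety of a linear space, which you already knew from $F \subset T$. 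In characteristic zero the reverse inclusion is precisely where one integrates the distribution $\ker II$; in positive characteristic that tool is unavailable, and your proposed substitute does not do the job: fibre directions lie in $\ker II$ automatically in \emph{every} characteristic (since $\gamma$ is constant on the fibre, $d\gamma$ kills them), and ``$d\gamma$ has the expected rank'' is just a restatement of separability. Neither bridges the infinitesimal statement to the set-theoretic one, so the key step is deferred rather than proved.

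The paper's bridge is different and is the actual content of \autoref{sec:line-gener-fibers}. \autoref{thm:two-kernels} identifies $\ker \Psi$ with $\ker (d\gamma(-1) \circ \xi)$; separability then gives $\ker \gamma^*\Phi = \ker \Psi$ (\autoref{thm:kernel-rem}), from which follow (i) the rank computation $m_0 = \dim X - \dim \Y$, and (ii) the global containment $\Gamma(X) \subset \sigma^* U_{\X_0}$ (\autoref{thm:im-tgamma-sigma-U0}). By (i) both sides of (ii) are irreducible of the same dimension $\dim X$, so the containment is an \emph{equality}; projecting the fibre of $\sigma^* U_{\X_0} \rightarrow \Y$ over $y$ into $\PN$ then shows simultaneously that $\sigma(y) \subset X$ and that the closure of $\gamma^{-1}(y)$ is the $m_0$-plane $\sigma(y)$ (\autoref{cor-mainthm}). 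This containment-plus-dimension-count is what replaces integration in arbitrary characteristic, and it is absent from your proposal. To repair your write-up you would either have to supply this mechanism (or an equivalent one), or drop the moving-frame material entirely and instead quote statement (b) of \autoref{thm:mainthm-2} as a black box together with the short extraction argument above.
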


According to a theorem of
Zak \cite[I, 2.8. Corollary]{Zak},
the Gauss map is finite if $X$ is smooth (and is not a linear subvariety of $\PN$).
Combining with \autoref{mainthm}, we have that,
if the projective variety $X \subset \PN$ is smooth and the Gauss map $\gamma$ is separable,
then $\gamma$ is in fact birational
(\autoref{thm:cor-Gamma-birat}).
Geometrically, the birationality of $\gamma$ means that
a general embedded tangent space is tangent to $X$
at a unique point.

In characteristic zero,
it was well known that
the closure $F \subset X$ of a general fiber of the Gauss map $\gamma$
is a linear subvariety of $\PN$ (Griffiths and Harris \cite[(2.10)]{GH}, Zak \cite[I, 2.3. Theorem (c)]{Zak}).
In positive characteristic, 
$\gamma$ can be \emph{inseparable},
and then $F$ can be \emph{non-linear}
(see \autoref{thm:ch-p-non-lin});
this leads us to a natural question:
Is $F$ a linear subvariety if $\gamma$ is \emph{separable}?
(Kaji asked, for example, in \cite[Question 2]{Kaji2003-2} \cite[Problem 3.11]{Kaji2009}.)
The curve case was classically known (see \autoref{thm:dimX-1-gamma-bir}).
Kleiman and Piene \cite[pp.~108--109]{KP} proved that,
if $X \subset \PN$ is \textit{reflexive},
then a general fiber of the Gauss map $\gamma$
is scheme-theoretically (an open subscheme of)
a linear subvariety of $\PN$.
In characteristic zero,
their result
gives a reasonable
proof of the linearity of $F$,
since 
every $X$ is reflexive.
In arbitrary characteristic,
in terms of reflexivity,
the linearity of a general fiber $F$ of a separable $\gamma$ follows
if $\codim_{\PN} (X) = 1$ or  $\dim X \leq 2$,
since separability of $\gamma$ implies reflexivity of $X$
if $\codim_{\PN} (X) = 1$
(due to the Monge-Segre-Wallace criterion \cite[(2.4)]{HK}, \cite[I-1(4)]{Kleiman1986}),
$\dim X = 1$
(Voloch \cite{Voloch}, Kaji \cite{Kaji1992}),
or $\dim X = 2$ (Fukasawa and Kaji \cite{FK2007}).
On the other hand, for $\dim X \geq 3$,
Kaji \cite{Kaji2003} and Fukasawa \cite{Fukasawa2006-3} \cite{Fukasawa2007}
showed that separability of $\gamma$ does \emph{not}
imply reflexivity of $X$ in general.
For any $X$,
by \autoref{mainthm},
we finally answer the question affirmatively.

\subsection{Generalization of the characterization of Gauss images}

We generalize
the characterization of Gauss images given by Landsberg and Piontkowski
to the arbitrary characteristic case, as follows:

\begin{thm}[= \autoref{thm:cor-sep-gauss-image}]
  Let $\sigma$ be the shrinking map from a closed subvariety $\Y \subset \GM$
  to $\GMm$ with integers $M, \Mm$ ($M \geq \Mm$),
  and let $U_{\GMm} \subset \GMm \times \PN$ be the universal family of $\GMm$.
  Then
  $\Y$ is the closure of a image of a separable Gauss map if and only if
  $\Mm = M-\dim \Y$ holds and the projection
  $\sigma^*U_{\GMm} \rightarrow \PN$ is separable and generically finite onto its image.
\end{thm}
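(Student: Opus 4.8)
The plan is to prove both directions of the biconditional by working out explicitly what the shrinking map $\sigma$ and its "pullback" universal family encode, and to relate these to the Gauss map via the expanding-map duality established in \autoref{sec:expa-map-subv}.

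First I would analyze the geometric meaning of the data $(\Y, \sigma, U_{\GMm})$. Recall that a point of $\GM$ is an $M$-dimensional linear subspace of $\PN$, and $\Y \subset \GM$ is a family of such subspaces. The shrinking map $\sigma$ sends a general point of $\Y$ to a smaller linear subspace of dimension $\Mm$ contained in the corresponding $M$-plane; by the definition of the shrinking map, this $\Mm$-plane records the locus on the $M$-plane along which the family $\Y$ fails to move infinitesimally (the "fixed" or degenerate directions). The object $\sigma^{*}U_{\GMm} \subset \Y \times \PN$ is the incidence variety whose fiber over $y \in \Y$ is the $\Mm$-plane $\sigma(y)$, and the projection $\sigma^{*}U_{\GMm} \to \PN$ sweeps out a subvariety of $\PN$ which I will call $X$. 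The crux of the argument is that $X$ should be exactly the variety whose Gauss image is $\Y$.

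For the forward direction, suppose $\Y = \overline{\gamma(X)}$ for a separable Gauss map $\gamma$ of some $X \subset \PN$. Here $\Y \subset \GM$ with $M = \dim X$, and each point of $\Y$ is a tangent space $\TT_{x}X$. I would invoke \autoref{mainthm}: since $\gamma$ is separable, the closure of a general fiber is a linear subvariety of $\PN$, of dimension $M - \dim\Y$. The key claim is that the shrinking map $\sigma$ applied to a general tangent space $\TT_{x}X$ recovers precisely this linear fiber through $x$; this identifies $\Mm$ with $M - \dim\Y$ and realizes $\sigma^{*}U_{\GMm} \to \PN$ as (the closure of) the union of the general fibers of $\gamma$, which is $X$ itself. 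Separability and generic finiteness of this projection onto $X$ then follow from the separability of $\gamma$ together with the fact that a general point of $X$ lies on a single general fiber (so the incidence projection is generically one-to-one over $X$). Here I expect to use the Monge--Segre--Wallace type reflexivity input and the expanding-map/shrinking-map duality to match the scheme structures.

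For the converse, I assume $\Mm = M - \dim\Y$ and that $p : \sigma^{*}U_{\GMm} \to \PN$ is separable and generically finite onto its image $X := \overline{p(\sigma^{*}U_{\GMm})}$. I would show that $\dim X = \Mm + \dim\Y = M$ fails in general but that the correct count makes $\Y$ the Gauss image of $X$: a dimension count using generic finiteness gives $\dim X$, and the expanding map of $\Y$ (the dual notion introduced in the excerpt) should be shown to coincide with the Gauss map of $X$. Concretely, I would verify that at a general smooth point $x \in X$, the tangent space $\TT_{x}X$ equals the $M$-plane parametrized by the point $y \in \Y$ lying over $x$, using separability to guarantee that the tangent space is spanned as expected; this exhibits $\gamma_{X} = \gamma$ with image $\Y$ and $\gamma$ separable because $p$ is. The main obstacle will be the converse: controlling the scheme-theoretic tangent behavior in positive characteristic, where separability is exactly the hypothesis that lets one conclude that the $M$-plane $y$ is the honest embedded tangent space $\TT_{x}X$ rather than merely containing it; pinning down this equality — equivalently, that the expanding map of $\Y$ is generically an isomorphism onto $\Y$ and agrees with $\gamma_{X}$ — is where the duality between expanding and shrinking maps, together with the separability and generic-finiteness hypotheses, must be deployed most carefully.
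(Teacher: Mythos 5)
There is a genuine gap, and it sits exactly at what you yourself call the ``key claim'' of the forward direction: that the shrinking map applied to a general tangent plane $\TT_xX$ recovers precisely the closure of the Gauss fiber through $x$ (and, with it, the equality $\Mm = M-\dim\Y$, which is part of what must be proved). This claim is the entire content of that implication, and the tools you propose to close it with are not available: you defer to a ``Monge--Segre--Wallace type reflexivity input,'' but the classical criterion requires $X$ to be reflexive, and separability of the Gauss map does \emph{not} imply reflexivity once $\dim X\geq 3$ (the paper stresses this in the introduction, citing Kaji and Fukasawa). That failure is the very reason the paper builds a substitute: it proves the key claim via the kernel identification $\ker\Psi|_{X\spcirc}=\ker\bigl(d\gamma(-1)\circ\xi\bigr)|_{X\spcirc}$ (\autoref{thm:two-kernels}), established by the explicit local parametrization of \autoref{sec:param-expa-maps}; combined with \autoref{thm:kernel-rem}, \autoref{thm:QR-subset-psi} and \autoref{thm:im-tgamma-sigma-U0}, this yields both $m_0=\dim X-\dim\Y$ and $\Gamma(X)\subset\sigma^*U_{\X_0}$, with equality by a dimension count (\autoref{thm:mainthm-2a}); your ``generically one-to-one'' observation is then \autoref{thm:b-ac}. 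Note also that invoking \autoref{mainthm} inverts the paper's logic: linearity of general fibers is itself deduced (\autoref{cor-mainthm}) from this same implication, and even granting linearity you would still owe an argument that the fiber equals $\sigma(y)$, rather than being a proper linear subspace of $\sigma(y)$ --- soft arguments give at best the inclusion $\overline{\gamma^{-1}(y)}\subset\sigma(y)$, i.e.\ $\Mm\geq M-\dim\Y$, while the reverse inequality is again the kernel computation.

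Your converse direction is much closer to the mark: the outline (generic finiteness plus $\Mm=M-\dim\Y$ gives $\dim X=M$; separability of $\sigma^*U_{\GMm}\to\PN$ forces $\TT_xX\subset y$ over a general point; equal dimensions upgrade this to $\TT_xX=y$; hence $\gamma_X$ is dominant onto $\Y$ and separable) is essentially the undualized form of the paper's \autoref{thm:c-to-a-dual}, which is converted into ``(c') $\Rightarrow$ (a)'' by the duality of \autoref{thm:dual-exp-shr-def} (see \autoref{thm:c-to-a-dual:rem}). But here too the crucial step --- that separability of the projection traps the tangent space of the image inside the ambient plane --- is asserted rather than proved; in the paper it is \autoref{thm:rk-ineq}\ref{item:rk-ineq:b}, a coordinate computation with the parametrization \ref{eq:para-dgammapi-0}, and it is precisely the \emph{generalized} Monge--Segre--Wallace criterion that replaces the reflexivity you wanted to use. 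Finally, your aside that ``$\dim X=\Mm+\dim\Y=M$ fails in general'' is mistaken: under the stated hypotheses this count holds, and it is exactly what the argument uses.
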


Here
the \emph{generalized conormal morphism},
induced from a expanding map,
plays an essential role;
indeed, we give a generalization of the Monge-Segre-Wallace criterion to the morphism
(\autoref{thm:rk-ineq}).

\subsection{Duality on one-dimensional developable parameter spaces}
\label{sec:dual-one-dimens}

Later in the paper,
instead of the subvariety $X \subset \PN$,
we focus on $\X \subset \Gm$, a parameter space of $m$-planes lying in $X$,
and study \emph{developability} of $(\X,X)$
(see \autoref{thm:def-developable}).
It is classically known that, in characterize zero,
a projective variety having a one-parameter developable uniruling (by $m$-planes)
is obtained as a cone over an \emph{osculating scroll} of a curve
(\cite[2.2.8]{FP}, \cite[Theorem.~3.12.5]{IL};
the arbitrary characteristic case was investigated by Fukasawa~\cite{Fukasawa2005}).
Applying our main theorem,
we find duality on
one-dimensional developable parameter spaces via expanding maps and shrinking maps, in arbitrary characteristic,
as follows.
Here
$\gamma^{i} = \gamma^{i}_{\X}$ is defined inductively by
$\gamma^1 := \gamma, \gamma^{i} := {\gamma_{\gamma^{i-1}\X} \circ \gamma^{i-1}}$,
with
the closure $\gamma^i\X$ of the image of $\X$ under $\gamma^i$.
In a similar way, $\sigma^i$ is defined.

\begin{thm}[= \autoref{thm:curve-gamma-sigma}]\label{thm:curve-gamma-sigma:1}
  Let $\X \subset \Gm$ and ${\X'} \subset \Gr(m', \PN)$ be projective curves.
  Then, for an integer $\epsilon \geq 0$, the following are equivalent:
  \begin{enumerate}
  \item
    ${\X'}$ is developable, 
    the map $\gamma^{\epsilon}= \gamma^{\epsilon}_{\X'}$ is separable and generically finite,
    and $\gamma^{\epsilon}{\X'} = \X$.
  \item\label{item:curve-gamma-sigma:b}
    ${\X}$ is developable,
    the map $\sigma^{\epsilon} = \sigma^{\epsilon}_{\X}$ is separable and generically finite,
    and $\sigma^{\epsilon}\X = {\X'}$.
  \end{enumerate}
  In this case, $m = m'+\epsilon$.
\end{thm}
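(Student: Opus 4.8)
The plan is to prove the equivalence by induction on $\epsilon$, reducing it to a single one-step duality between the expanding map $\gamma$ and the shrinking map $\sigma$ for developable curves. For $\epsilon = 0$ both $\gamma^{0}$ and $\sigma^{0}$ are the identity, so each of the two conditions asserts precisely that $\X = \X'$ is developable; the statements coincide and $m = m'$. The inductive step will be powered by the following one-step duality: for a developable projective curve $\mathcal{Z} \subset \Gr(\ell,\PN)$, the map $\gamma_{\mathcal{Z}}$ is separable and generically finite if and only if $\mathcal{W} := \gamma_{\mathcal{Z}}\mathcal{Z} \subset \Gr(\ell+1,\PN)$ is developable, $\sigma_{\mathcal{W}}$ is separable and generically finite, and $\sigma_{\mathcal{W}}\mathcal{W} = \mathcal{Z}$; in that case the parameter dimension rises by exactly one under $\gamma$ and drops by one under $\sigma$. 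By symmetry the same statement read from the $\sigma$-side gives the converse reconstruction.

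To prove the one-step duality I would first pass from the curve $\mathcal{Z} \subset \Gr(\ell,\PN)$ to the uniruled variety $Z \subset \PN$ swept out by its $\ell$-planes, and use developability (\autoref{thm:def-developable}) to identify $\gamma_{\mathcal{Z}}$ with the Gauss-type expanding map of $Z$, whose target is $\Gr(\ell+1,\PN)$ and whose image is again a curve. Now \autoref{thm:mainthm-2} and the characterization of Gauss images \autoref{thm:cor-sep-gauss-image}, applied with $\dim \mathcal{W} = 1$ so that the shift $\Mm = M - \dim \mathcal{W}$ becomes $\ell = (\ell+1) - 1$, translate separability and generic finiteness of $\gamma_{\mathcal{Z}}$ into separability and generic finiteness of the projection from the pullback of the universal family along $\sigma_{\mathcal{W}}$ to $\PN$ — which is exactly the statement that $\sigma_{\mathcal{W}}$ reconstructs $\mathcal{Z}$. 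The generalized Monge-Segre-Wallace criterion for the generalized conormal morphism (\autoref{thm:rk-ineq}) is the tool that matches the rank data, hence the separability data, on the $\gamma$-side with that on the $\sigma$-side, yielding the equivalence in both directions together with the preservation of developability.

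For the inductive step I would unwind the recursions $\gamma^{\epsilon} = \gamma_{\gamma^{\epsilon-1}\X'} \circ \gamma^{\epsilon-1}$ and $\sigma^{\epsilon} = \sigma_{\sigma^{\epsilon-1}\X} \circ \sigma^{\epsilon-1}$, which by associativity may equally be peeled from the first step. Assuming (1), put $\mathcal{Z} := \gamma^{\epsilon-1}\X'$. Since $\gamma^{\epsilon}$ is generically finite, all maps in sight are generically finite, so the function-field extensions form a finite tower; because a composite of dominant generically finite maps is separable exactly when each factor is, both $\gamma^{\epsilon-1}$ and the last step $\gamma_{\mathcal{Z}}$ are separable and generically finite, and inductively each $\gamma^{i}\X'$ is developable by the one-step duality. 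Applying the one-step duality to $\mathcal{Z}$ gives that $\X = \mathcal{W}$ is developable, $\sigma_{\X}$ is separable and generically finite, and $\sigma_{\X}\X = \mathcal{Z} = \gamma^{\epsilon-1}\X'$. The induction hypothesis applied to the pair $(\X', \sigma_{\X}\X)$, for which (1) holds at level $\epsilon-1$, converts the remaining chain of expanding maps into a chain of shrinking maps, giving $\sigma^{\epsilon-1}_{\sigma_{\X}\X}(\sigma_{\X}\X) = \X'$ with all steps separable and generically finite; composing with $\sigma_{\X}$ yields $\sigma^{\epsilon}_{\X}\X = \X'$ and establishes (2). The implication (2) $\Rightarrow$ (1) is symmetric, and the count $m = m'+\epsilon$ follows since each expanding step raises the parameter dimension by one.

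The main obstacle is the one-step duality, and specifically the verification that $\sigma_{\mathcal{W}}$ reconstructs $\mathcal{Z}$ exactly rather than a proper degeneration of it, together with the transfer of separability across the duality in positive characteristic. This is the precise point where the separability hypothesis cannot be dropped: in its absence the expanding and shrinking maps need not be mutually inverse and the reconstruction $\sigma_{\mathcal{W}}\mathcal{W} = \mathcal{Z}$ can fail. Once \autoref{thm:mainthm-2}, \autoref{thm:cor-sep-gauss-image} and \autoref{thm:rk-ineq} are available, they supply exactly this reconstruction and the matching of separability, after which the remaining induction is bookkeeping.
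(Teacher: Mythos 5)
Your proposal is correct and runs on the same engine as the paper's own proof --- reduce to the one-step case $\epsilon=1$ and power that step by \autoref{thm:mainthm-2} together with \autoref{thm:mainthm-2-cor} --- but you enter that engine from the opposite side, which changes the supporting lemmas. The paper proves (b) $\Rightarrow$ (a) directly: it gets $m'=m-1$ from \autoref{thm:curve-developability}, generic finiteness of $\sigma_{\X}$ from \autoref{thm:nondege-notcone} (in the body version, where ``not a cone'' replaces your assumed generic finiteness), separability of $\sigma_{\X}^*\pi$ from \autoref{thm:sigma-iff-sigmapi}, and then feeds condition (c') into the main theorem. You instead prove (a) $\Rightarrow$ (b) directly: you use \autoref{thm:curve-sep-uniruling} and the developability diagram of \autoref{thm:expand-develop} to identify the expanding map of $\mathcal{Z}$ with the Gauss map of its swept variety, feed condition (a) into the main theorem, and read the shrinking-side data off \autoref{thm:mainthm-2-cor}. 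Your route avoids \autoref{thm:sigma-iff-sigmapi} but needs the (routine) transitivity argument making the Gauss map of the swept variety separable; the paper's route avoids that. Both then dispatch the converse implication by symmetry, just as the paper does (``follows in the same way'').

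Two steps you gloss should be made explicit, though both are fillable from results you already cite, so I regard them as presentational rather than mathematical gaps. First, separability and generic finiteness of the projection $\sigma^*U_{\X_0}\rightarrow\PN$ (condition (c)) is \emph{not} ``exactly'' the reconstruction $\sigma_{\mathcal{W}}\mathcal{W}=\mathcal{Z}$; the reconstruction comes from \autoref{cor-mainthm}/\autoref{thm:mainthm-2-cor}: the closure of a general fiber of the Gauss map of the swept variety is a linear space of dimension $m_0=\ell$, it contains the $\ell$-plane of $\mathcal{Z}$ through a general point by developability, hence equals it, so the maximal parameter space $\X_0$ coincides with $\mathcal{Z}$ (this is \autoref{thm:X-sep-ruled}). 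Second, your appeal to ``symmetry'' for the converse rests on identifying $\sigma$ with the expanding map of the dual family (\autoref{thm:dual-exp-shr-def}); for this to close the loop you must know that developability of a \emph{curve} is self-dual under that identification, which is precisely what \autoref{thm:curve-2m-lem} and \autoref{thm:curve-developability} supply (alternatively, run the mirrored argument directly, which is exactly the paper's (b) $\Rightarrow$ (a)). With those two points spelled out, your induction and the count $m=m'+\epsilon$ go through as written.
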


As a corollary,
if $\sigma^m$ is separable and $X$ is not a cone,
then $C := \sigma^{m}\X$ is a projective curve in $\PN$
such that $\gamma^{m}$ is separable and $\X = \gamma^{m} C$;
in particular,
$X$
is equal to the osculating scroll of order $m$ of $C$
(\autoref{thm:one-para=oscu}).
On the other hand,
if $\gamma^2_{\X}$ is separable,
then an equality $T((TX)^*) = X^*$ in $\Pv$ holds
(\autoref{thm:one-dim-dual-tan};
cf. for osculating scrolls of curves,
this equality was deduced from Piene's work in characterization zero~\cite{Piene1983},
and 
was shown by Homma under some conditions on the characteristic~\cite{Homma}
(see \cite[Remark~4.3]{Homma})).
\vspace{1em}

This paper is organized as follows:
In \autoref{sec:expa-map-subv}
we fix our notation and give a local parametrization of a expanding map
$\gamma: \X \dashrightarrow \Gmp$ of
a subvariety $\X \subset \Gm$.
In addition, setting $\Y$ to be the closure of the image of $\X$,
we investigate properties of composition of the expanding map $\gamma$ of $\X$ and
the shrinking map $\sigma$ of $\Y$.
Then, in \autoref{sec:compo-expa-shr-maps} we prove the main theorem, \autoref{thm:mainthm-2}.
In \autoref{sec:developability} we regard $\X$ as a parameter space
of $m$-planes lying in $X \subset \PN$,
and study developability of $\X$ in terms of $\gamma$.

\section{Expanding maps of subvarieties of Grassmann varieties}
\label{sec:expa-map-subv}

In this section, we denote by $\gamma: \X \dashrightarrow \Gmp$
the expanding map of a subvariety $\X \subset \Gm$
with integer $m, \mpl$ ($m \leq \mpl$), which is defined as follows:

\begin{defn}
  Let $\sQ_{\Gm}$ and $\sS_{\Gm}$
  be the universal quotient bundle and subbundle of rank $m+1$ and $N-m$ on $\Gm$
  with the exact sequence
  $0 \rightarrow \sS_{\Gm} \rightarrow H^0(\PN, \sO(1)) \otimes \sO_{\Gm}\rightarrow \sQ_{\Gm} \rightarrow 0$.
  We set $\sQ_{\X} := \sQ_{\Gm}|_{\X}$
  and call this
  the universal quotient bundle on $\X$, and so on.
  We denote by $\X^{sm}$ the smooth locus of $\X$.
  A homomorphism $\phi$ is defined by the composition:
  \begin{equation*}
    \phi: \sS_{\X^{sm}} \rightarrow \sHom(\sHom(\sS_{\X^{sm}}, \sQ_{\X^{sm}}), \sQ_{\X^{sm}}) \rightarrow \sHom(T_{\X^{sm}}, \sQ_{\X^{sm}}),
  \end{equation*}
  where
  the first homomorphism
  is induced from the dual of
  $\sQ_{\X} \otimes \sQ_{\X}\spcheck \rightarrow \sO_{\X}$,
  and the second one
  is induced from
  $T_{\X^{sm}} \hookrightarrow T_{\Gm}|_{\X^{sm}} = \sHom(\sS_{\X^{sm}}, \sQ_{\X^{sm}})$.
  We can take an integer $\mpl = \mpl_{\gamma}$ with $m \leq \mpl \leq N$ such that
  a general point $x \in \X$ satisfies
  \begin{equation*}    \dim (\ker \phi \otimes k(x)) = N-\mpl.
  \end{equation*}
  Let $\Pv: = \Gr(N-1, \PN)$, the space of hyperplanes.
  Then $\ker \phi|_{\Xo}$ is a subbundle of
  $H^0(\PN, \sO(1)) \otimes \sO_{\Xo} \simeq H^0(\Pv, \sO(1))\spcheck \otimes \sO_{\Xo}$
  of rank $N-\mpl$ 
  for a certain open subset $\Xo \subset \X$.
  By the universality of the Grassmann variety, 
  under the identification $\Gr(N-\mpl-1, \Pv) \simeq \Gmp$,
  we have an induced morphism,
  \[
  \gamma = \gamma_{\X/\Gm}: \Xo \rightarrow  \Gmp.
  \]
  We call $\gamma$ the \emph{expanding map} of $\X$.
  Here $\ker \phi|_{\Xo} \simeq \gamma|_{\Xo}^*(\sS_{\Gmp})$.
\end{defn}

\begin{rem}\label{thm:expanding-map-Gauss-map}
  Suppose that $m=0$ and $X \subset \PN = \Gr(0, \PN)$.
  Then $\gamma = \gamma_{X/\PN}$ coincides with the Gauss map $X \dashrightarrow \Gr(\dim(X), \PN)$;
  in other words, $\gamma(x) = \TT_xX$
  for each smooth point $x \in X$.
  The reason is as follows:
  In this setting, it follows that $\sS_{\PN} = \Omega^1_{\PN}(1)$ and $\sQ_{\PN} = \sO_{\PN}(1)$,
  and that $\phi$ is the homomorphism $\Omega^1_{\PN}(1)|_X \rightarrow \Omega^1_X(1)$.
  Therefore $\ker\phi|_{\X^{sm}} = N_{X/\PN}\spcheck(1)|_{\X^{sm}}$, which implies the assertion.
\end{rem}

The shrinking map $\sigma: \Y \dashrightarrow \GMm$
of a subvariety $\Y \subset \GM$ with integers $M, \Mm$ ($M \geq \Mm$) is defined similarly, as follows:

\begin{defn}\label{thm:def-shr-map}
  Let $\sQ_{\Y}$ and $\sS_{\Y}$ be the universal quotient bundle and subbundle
  of rank $M+1$ and $N-M$ on $\Y$.
  A homomorphism $\Phi$ is defined by the composition:
  \[
  \Phi:  \sQ_{\Y^{sm}}\spcheck
  \rightarrow  \sHom(\sHom (\sQ_{\Y^{sm}}\spcheck, \sS_{\Y^{sm}}\spcheck), \sS_{\Y^{sm}}\spcheck)
  \rightarrow \sHom(T_{\Y^{sm}}, \sS_{\Y^{sm}}\spcheck),
  \]
  where the second homomorphism is induced from
  $T_{\Y^{sm}} \hookrightarrow T_{\GM}|_{\Y^{sm}} = \sHom(\sQ_{\Y^{sm}} \spcheck, \sS_{\Y^{sm}}\spcheck)$.
  We can take an integer $\Mm = \Mm_{\sigma}$
  with $-1 \leq \Mm \leq M$ such that a general point $y \in \Y$
  satisfies
  \[
  \dim (\ker \Phi \otimes k(y)) = \Mm + 1.
  \]
  Since $\ker \Phi|_{\Y\spcirc}$
  is a subbundle of $H^0(\PN, \sO(1))\spcheck \otimes \sO_{\Y\spcirc}$ of rank $\Mm+1$
  for a certain open subset $\Y\spcirc \subset \Y$,
  we have
  an induced morphism, called the \emph{shrinking map} of $\Y$,
  \[
  \sigma = \sigma_{\Y/\GM}: \Y\spcirc \rightarrow \GMm.
  \]
  Here we have $\ker \Phi|_{\Y\spcirc} = \sigma|_{\Y\spcirc}^*(\sQ_{\GMm}\spcheck)$.
\end{defn}

\begin{rem}\label{thm:dual-exp-shr-def}
  Let $\bar \X \subset \Gr(N-m-1,\Pv)$ be
  the subvariety corresponding to $\X$
  under the identification $\Gm \simeq \Gr(N-m-1,\Pv)$, and so on.
  Then $\gamma_{\X/\Gm}$ is identified with the shrinking map
  \[
  \sigma_{\bar\X/\Gr(N-m-1,\Pv)}: \bar\X \dashrightarrow \Gr(N-\mpl-1,\Pv)
  \]
  under $\Gmp \simeq \Gr(N-\mpl-1, \Pv)$.
  In a similar way,
  $\sigma_{\Y/\GM}$ is identified with the expanding map $\gamma_{\bar \Y/\Gr(N-M-1,\Pv)}$.
\end{rem}

Let $U_{\Gm} \subset \Gm \times \PN$ be the universal family of $\Gm$.
We denote by $U_{\X} := U_{\Gm}|_{\X} \subset \X \times \PN$
the universal family of $\X$, 
and by $\pi_{\X}: U_{\X} \rightarrow \PN$ the projection, and so on.
(Recall that, for each $x \in \X$,
the $m$-plane $x \subset \PN$ is equal to $\pi_{\X}(L_x)$
for the fiber $L_x$ of $U_{\X} \rightarrow \X$ at $x$.)

\begin{rem}\label{thm:basic-inclusion}
  A general point $x \in \X$ gives an inclusion
  $x \subset \gamma(x)$ of linear varieties in $\PN$, and
  a general point $y \in \Y$ gives an inclusion
  $\sigma(y) \subset y$ of linear varieties in $\PN$.
\end{rem}

\begin{lem}\label{thm:nondege-notcone}
  Let $\X, \Y, U_{\X}, U_{\Y}$ be as above. Then the following holds:
  \begin{enumerate}
  \item If $\mpl \leq N-1$ and the image of $\gamma$ is a point $L \in \Gmp$,
    then $\pi_{\X}(U_\X) \subset \PN$
    is contained in the $\mpl$-plane $L$.
  \item
    If $\Mm \geq 0$ and the image of $\sigma$ is a point $L \in \GMm$,
    then $\pi_{\Y}(U_{\Y})$ is a cone in $\PN$ such that the $\Mm$-plane $L$ is a vertex of the cone.
  \end{enumerate}
\end{lem}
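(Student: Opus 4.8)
The plan is to read both statements directly off the inclusion relations recorded in \autoref{thm:basic-inclusion}, using only that an incidence condition between linear subvarieties cuts out a closed locus in the relevant Grassmann variety. In each case the hypothesis forces the map in question to be constant, so \autoref{thm:basic-inclusion} pins every fibre $m$-plane (resp.\ $M$-plane) into a fixed position relative to $L$; propagating this from a dense open subset to all of $\X$ (resp.\ $\Y$) by closedness then yields the conclusion.

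For (1), the hypothesis that the image of $\gamma$ is the single point $L$ means $\gamma(x) = L$ for every $x$ in the domain of $\gamma$. By \autoref{thm:basic-inclusion} a general $x \in \X$ satisfies $x \subset \gamma(x) = L$, so the $m$-plane $x$ lies in the $\mpl$-plane $L$. Now $\set*{x \in \Gm}{x \subset L}$ is the sub-Grassmann variety $\Gr(m,L)$, which is closed in $\Gm$; since it contains a dense subset of $\X$, it contains $\X$ entirely. Hence every $m$-plane parametrized by $\X$ lies in $L$, and taking the union $\pi_{\X}(U_{\X}) = \bigcup_{x \in \X} x \subset L$, as claimed. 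The hypothesis $\mpl \leq N-1$ serves only to make $L$ a proper linear subvariety of $\PN$.

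For (2), the hypothesis gives $\sigma(y) = L$ for a general $y \in \Y$, and \autoref{thm:basic-inclusion} yields $L = \sigma(y) \subset y$, so the $\Mm$-plane $L$ lies in the $M$-plane $y$. Again $\set*{y \in \GM}{L \subset y}$ is closed (a Schubert condition), so the same density argument shows that every $M$-plane $y$ parametrized by $\Y$ contains $L$. Then for any point $p \in \pi_{\Y}(U_{\Y})$ there is some such $y$ with $p \in y$; since $y$ is linear and contains both $p$ and $L$, the span $\lin{p, L}$ is contained in $y$, hence in $\pi_{\Y}(U_{\Y})$. This is exactly the assertion that $\pi_{\Y}(U_{\Y})$ is a cone with vertex the $\Mm$-plane $L$, the hypothesis $\Mm \geq 0$ guaranteeing that $L$ is nonempty.

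There is essentially no hard step here: the only point requiring care is the passage from the generic inclusion of \autoref{thm:basic-inclusion} to an inclusion valid on all of $\X$ (resp.\ $\Y$), which is handled by the closedness of the incidence loci together with the density of the domain of $\gamma$ (resp.\ $\sigma$). I note finally that, by the duality of \autoref{thm:dual-exp-shr-def}, parts (1) and (2) are interchanged under the identification $\Gm \simeq \Gr(N-m-1, \Pv)$: the condition that all $m$-planes lie in a fixed plane $L$ dualizes to the condition that all the dual $(N-m-1)$-planes contain the annihilator of $L$, that is, to the cone condition. Thus either part could alternatively be deduced formally from the other.
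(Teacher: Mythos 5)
Your proof is correct and follows essentially the same route as the paper: both parts are read off from the generic inclusions $x \subset \gamma(x)$ and $\sigma(y) \subset y$ of \autoref{thm:basic-inclusion} applied to a constant map, followed in (2) by the span argument $\lin{p,L} \subset y$ giving the cone structure. The only difference is that you make explicit the passage from general points to all of $\X$ (resp.\ $\Y$) via closedness of the incidence loci in the Grassmann variety, a step the paper leaves implicit.
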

\begin{proof}
  \begin{inparaenum}
  \item     For general $x \in \X$, we have $x \subset \gamma(x) = L$ as in \autoref{thm:basic-inclusion}.
    It follows that $\pi_{\X}(U_\X)$ is contained in the $\mpl$-plane $L$.

  \item         For general $y \in \Y$, we have $\sigma(y) = L \subset y$.
    We set $Y' := \pi_{\Y}(\Y) \subset \PN$.
    Then a general point $y' \in Y'$ is contained in some $M$-plane $y$,
    so that also $\lin{y', L}$
    is contained in $y$,
    where $\lin{y', L}$ is the linear subvariety of $\PN$ spanned by $y'$ and $L$.
    Hence $Y'$ is a cone with vertex $L$.
  \end{inparaenum}
\end{proof}

We denote by $\cP(A) := \Proj (\bigoplus \mathrm{Sym}^{d} A \spcheck)$
the projectivization of a locally free sheaf or a vector space $A$.
\begin{defn}\label{thm:defn-conormal}
  Let
  \[
  V_{\GM}:= \cP(\sS_{\GM}),
  \]
  which
  is contained in $ \GM \times \Pv = \cP(H^0(\PN, \sO(1)) \otimes \sO_{\GM})$
  and is regarded as the universal family of $\Gr(N-M-1, \Pv)$.
  We set $V_{\Y} := V_{\GM}|_{\Y}$ and set
  $\bpi = \bpi_{\Y}: V_{\Y} \rightarrow \Pv$ to be the projection.

  In the case where $\Y$
  is the closure of the image of $\X$ under the expanding map $\gamma$,
  the following commutative diagram is obtained:
  \begin{equation*}
    \xymatrix{      \gamma^*V_{\Y} \ar@{-->}[r] \ar@/^1pc/@<0.3em>[rr]^{\gamma^*\bpi} \ar[d]& V_{\Y} \ar[r]_{\bpi} \ar[d]& \Pv
      \\
      \X \ar@{-->}[r]_{\gamma} & \Y \makebox[0pt]{\,,}
    }  \end{equation*}
  where
  we call the projection $\gamma^*\bpi: \gamma^*V_{\Y} \rightarrow \Pv$
  the \emph{generalized conormal morphism},
  and where
  $\gamma^*V_{\Y} \subset \X \times \Pv$ is the closure of the pull-back
  $(\gamma|_{\Xo})^*V_{\Y}$.
  Note that we have $(\gamma|_{\Xo})^*V_{\Y} = \cP(\ker\phi|_{\Xo})$,
  because of $\gamma|_{\Xo}^*(\sS_{\Y}) \simeq \ker \phi|_{\Xo}$.
\end{defn}

\subsection{Standard open subset of the Grassmann variety}
\label{thm:desc-Go}
Let us denote by
\[
(Z^0: Z^1: \dots: Z^N)
\]
the homogeneous coordinates on $\PN$.
To fix our notation,
we will prepare a description of a standard open subset $\Gom \subset \Gm$
which is the set of $m$-planes not intersecting the $(N-m-1)$-plane
$(Z^0 = Z^1 = \dots = Z^{m} = 0)$.
Let us denote by
\[
Z_0, Z_1, \dots, Z_{N} \in H^0(\PN, \sO(1))\spcheck 
\]
the dual basis of $Z^0, Z^1, \dots, Z^{N} \in H^0(\PN, \sO(1))$, and so on.

\begin{inparaenum}[(A)]
\item \label{thm:desc-Go:Go}
  The sheaves $\sQ_{\Gom}$ and $\sS_{\Gom} \spcheck$
  are free on $\Gom$, and are equal to
  $Q \otimes \sO_{\Gom}$ and $S\spcheck \otimes \sO_{\Gom}$,
  for the vector spaces
  \[
  Q := \bigoplus_{\RNi} K \cdot \eta^i \textand
  S\spcheck := \bigoplus_{\RNj} K \cdot \zeta_j,
  \]
  where $K$ is the ground field,
  $\eta^i$ is the image of $Z^i$ under
  $H^0(\PN, \sO(1)) \otimes \sO \rightarrow \sQ_{\Gm}$,
  and $\zeta_j$ is the image of $Z_j$ under
  $H^0(\PN, \sO(1))\spcheck \otimes \sO \rightarrow \sS_{\Gm}\spcheck$.
  
  We have a standard isomorphism
  \begin{equation}\label{eq:HomQvSv}
    \Gom \simeq \Hom(Q\spcheck , S\spcheck):
    x \mapsto \sum_{\RNij}
    a^j_i \cdot \eta^i \otimes \zeta_j = (a^j_i)_{i,j},
  \end{equation}
  as follows.
  We take an element $x \in \Gom$.
  Under the surjection $H^0(\PN, \sO(1))\spcheck \rightarrow \sS_{\Gm} \spcheck \otimes k(x)$,
  for each $\RNi$,
  we have $Z_i \mapsto - \sum_{\RNj} a_i^j \cdot \zeta_j$
  with some $a_i^j = a_i^j(x) \in K$.
  This induces a linear map
  $Q\spcheck  \rightarrow S\spcheck: \eta_i \mapsto \sum a_i^j \cdot \zeta_j$,
  which is regarded as a tensor $\sum a_i^j \cdot \eta^i \otimes \zeta_j$
  under the identification $\Hom(Q\spcheck , S\spcheck) \simeq Q \otimes S\spcheck$.
  This gives the homomorphism~\ref{eq:HomQvSv}.

  In this setting, the linear map
  $\sQ_{\Gom} \spcheck \otimes k(x) \rightarrow H^0(\PN, \sO(1))\spcheck$
  is given by
  $\eta_i \mapsto Z_i  + \sum_j a_i^j \cdot Z_j$,
  and hence, for each $x \in \Gm$, the $m$-plane $x \subset \PN$ is spanned by
  the points of $\PN$ corresponding to the row vectors of the $(m+1)\times (N+1)$ matrix,
  \[
  \begin{bmatrix}
    1 &  &  & \bm 0 & a^{m+1}_{0}  & a^{m+2}_{0} & \cdots & a^{N}_{0} \\
    & 1 &  &        & a^{m+1}_{1}  & a^{m+2}_{1} & \cdots & a^{N}_{1} \\
    && \ddots      && \vdots       & \vdots      && \vdots \\
    \bm 0 &  &  & 1 & a^{m+1}_{m}  & a^{m+2}_{m} & \cdots & a^{N}_{m}
  \end{bmatrix}.
  \]

\item\label{thm:desc-Go:UGm}
  Let $U_{\Gm} := \cP(\sQ_{\Gm}\spcheck)$ in $\Gm \times \PN$,
  which is the universal family of $\Gm$.
  Then we have an identification
  \[
  U_{\Gom} \simeq \Gom \times \PP^{m} \simeq \Hom(Q\spcheck, S\spcheck) \times \PP^{m}.
  \]
  Regarding $(\eta^0: \dots: \eta^{m})$ as the homogeneous coordinates on $\PP^{m} = \cP(Q\spcheck)$,
  under the identification~\ref{eq:HomQvSv},
  we can parametrize the projection $U_{\Gom} \rightarrow \PN$ by sending 
  $((a_i^j)_{i,j}, (\eta^0: \dots: \eta^{m}))$ to the point
  \[
  (\eta^0: \dots: \eta^{m}: \sum_i \eta^i a_i^{m+1}: \dots: \sum_i \eta^i a_i^{N}) \in \PN.
  \]
  This is also expressed as
  \begin{equation}\label{eq:desc-Go:UGm}
    \sum_{\RNi} \eta^i Z_i  + \sum_{\RNij} \eta^i a_i^j \cdot Z_j \in \PN.
  \end{equation}

\item\label{thm:desc-Go:VGm}
  The $m$-plane $x\in \Gom$, which is expressed as
  $(a^j_i)_{i,j}$ under~\ref{eq:HomQvSv}, is also given by
  the set of points $(Z^0: Z^1: \dots : Z^{N}) \in \PN$ such that

  \[
  \begin{bmatrix}
    Z^0 & Z^1 & \cdots & Z^{N}
  \end{bmatrix}
  \begin{bmatrix}
    a^{m+1}_{0} & a^{m+2}_{0} & \cdots & a^{N}_{0} \\
    a^{m+1}_{1} & a^{m+2}_{1} & \cdots & a^{N}_{1} \\
    \vdots      & \vdots      && \vdots \\
    a^{m+1}_{m} & a^{m+2}_{m} & \cdots & a^{N}_{m} \\
    -1 &  &  & \bm 0  \\ 
    & -1 &  &  \\       
    && \ddots\\
    \bm 0 &  &  & -1 \\    
  \end{bmatrix}
  = 0.
  \]

  Let $V_{\Gm} := \cP(\sS_{\Gm})$ in $\Gm \times \Pv$,
  which is the universal family of $\Gr(N-m-1, \Pv)$.
  Then we have an identification
  \[
  V_{\Gom} \simeq \Gom \times \PP^{N-m-1} \simeq \Hom(Q\spcheck, S\spcheck) \times \PP^{N-m-1}.
  \]
  Regarding $(\zeta_{m+1}: \dots: \zeta_{N})$ as homogeneous coordinates on $\PP^{N-m-1} = \cP(S)$,
  we can parametrize $V_{\Gom} \rightarrow \Pv$
  by sending $((a_i^j)_{i,j}, (\zeta_{m+1}: \dots: \zeta_{N}))$ to
  the hyperplane defined by the homogeneous polynomial
  \begin{equation}\label{eq:hyp-pl-defpl}
    \sum_{\RNij} \zeta_j a^j_i \cdot Z^i + \sum_{\RNj} -\zeta_j \cdot Z^j.
  \end{equation}
\end{inparaenum}

\subsection{Parametrization of expanding maps}
\label{sec:param-expa-maps}

Let $\X \subset \Gm$ be a subvariety with $m \geq 0$.
We will give a local parametrization of the expanding map $\gamma: \X \dashrightarrow \Gmp$
around a general point $\xo \in \X$
in the following two steps.

\Step{1}
Changing the homogeneous coordinates $(Z^0: \dots: Z^{N})$ on $\PN$,
we can assume that $\xo \in \Gm$ and $\gamma(\xo) \in \Gmp$ are linear subvarieties of
dimensions $m$ and $\mpl$ such that
\begin{gather}
  \label{eq:xo}
  \xo = (Z^{m+1} = \dots = Z^{N} = 0), \\
  \label{eq:gxo}
  \gamma(\xo) = (Z^{\mpl+1} = \dots = Z^{N} = 0),
\end{gather}
in $\PN$.
As in \autoref{thm:desc-Go},
let us consider the open subset  $\Gom \subset \Gm$,
and
take a system of regular parameters
$z^1, \dots, z^{\dim (\X)}$ of the regular local ring $\sO_{\X,\xo}$.
Then, under the identification~\ref{eq:HomQvSv},
$\X \cap \Gom$ is parametrized around $\xo$
by
\[
\sum_{\RNij} f^{j}_{i} \cdot \eta^i \otimes \zeta_j = (f^j_i)_{i,j}
\]
with regular functions $f^{j}_{i}$'s.
From \ref{eq:xo}, we have $f^{j}_{i}(\xo) = 0$.
For a general point $x \in \X$ near $\xo$,
we identify $Q$ with $\sQ_{\X} \otimes k(x)$, and $S$ with $\sS_{\X} \otimes k(x)$.
Then the linear map $T_x\X \hookrightarrow T_{x}\Gm = \Hom(Q\spcheck, S\spcheck)$
is represented by
\begin{align}\label{eq:repre-TxX}
  \frac{\partial}{\partial z^e} &\mapsto \sum_{\RNij} f^j_{i, z^e}(x) \cdot \eta^i \otimes \zeta_j
  & (\RNe),
\end{align}
where $f^j_{i, z^e} := \partial f^j_{i}/ \partial z^e$.
Therefore
$\Hom(\Hom(S, Q), Q) \rightarrow \Hom(T_x\X, Q)$
is represented by
\begin{align*}
  \zeta^j \otimes \eta_i \otimes \eta^{i'} &\mapsto
  \sum_{\RNe} f^j_{i, z^e}(x) \cdot d z^e \otimes \eta^{i'}
  & (\RNi[i,i'], \RNj).
\end{align*}
Since $S \rightarrow \Hom(\Hom(S, Q), Q)$ is given by
$\zeta^j \mapsto \zeta^j \otimes \sum_i (\eta_i \otimes \eta^i)$,
it follows that $\phi_x: S \rightarrow \Hom(T_x\X, Q)$ is represented by
\begin{align}\label{eq:repre-phi_x-zeta}
  \phi_x(\zeta^j) &= \sum_{\RNi, \RNe} f^j_{i, z^e}(x) \cdot d z^e \otimes \eta^i
  &(\RNj).
\end{align}
Recall that $\mpl$ is the integer such that
$N-\mpl = \dim (\ker \phi_x)$, which implies that $\dim(\phi_x(S)) = \mpl-m$.
By \ref{eq:gxo},
we have $\phi_{\xo}(\zeta^{\mpl+1}) = \dots = \phi_{\xo}(\zeta^{N}) = 0$.
It follows that
$\phi_x(\zeta^{m+1}), \dots, \phi_x(\zeta^{\mpl})$ give a basis of the vector space $\phi_x(S)$,
and that
\begin{align}\label{eq:zeta-lin-dep}
  \phi_x(\zeta^{\mu}) &= \sum_{\RNnu} g^\mu_{\nu}(x) \cdot \phi_x(\zeta^{\nu})
  &(\RNmu)
\end{align}
with regular functions $g^{\mu}_{\nu}$'s. As a result, we have
\begin{align}\label{eq:f-mu-i-ze}
  f^{\mu}_{i,z^e} &= \sum_{\RNnu} g^{\mu}_{\nu} f^{\nu}_{i,z^e} & (\RNi, \RNe)
\end{align}
for $\RNmu$.
Since $\phi_{\xo}(\zeta^{\mu}) = 0$ for $\RNmu$, 
in this setting, it follows that $g^{\mu}_{\nu}(\xo) = 0$ and that
\begin{align}\label{eq:f-mu-i-z-o}
  f^{\mu}_{i, z^e}(\xo) &= 0 & (\RNi, \RNe)
\end{align}
for $\RNmu$.

\begin{lem}
  Let $\X \subset \Gm$ be of dimension $> 0$ with $m < N$.
  For the integer $\mpl$ given with $\gamma: \X \dashrightarrow \Gmp$,
  we have $\mpl > m$.
\end{lem}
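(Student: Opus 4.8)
The plan is to deduce $\mpl > m$ from the nonvanishing of $\phi_x$ at a general point $x \in \X$. Since $\dim(\phi_x(S)) = \mpl - m$ for general $x$ (as recalled just before the statement), the inequality $\mpl > m$ is equivalent to $\phi_x \neq 0$ there. Note that the hypothesis $m < N$ guarantees that $S = \sS_{\X}\otimes k(x)$ has rank $N - m \geq 1$, so the index range $\RNj$ is nonempty and the elements $\zeta^j$ on which $\phi_x$ acts actually exist; thus it is enough to find, for general $x$, an index $j$ with $\phi_x(\zeta^j) \neq 0$.

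First I would read off the vanishing of $\phi_x$ from the parametrizing functions. By \ref{eq:repre-phi_x-zeta} we have $\phi_x(\zeta^j) = \sum_{\RNi, \RNe} f^j_{i,z^e}(x)\, dz^e \otimes \eta^i$, and since the $dz^e \otimes \eta^i$ constitute a basis of $\Hom(T_x\X, Q)$ at a general (hence smooth) point $x$, the map $\phi_x$ vanishes precisely when $f^j_{i,z^e}(x) = 0$ for all $\RNi$, $\RNj$, and $\RNe$. In other words, $\phi_x = 0$ is exactly the vanishing of the Jacobian matrix $\bigl(f^j_{i,z^e}(x)\bigr)$.

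Next I would invoke positive-dimensionality through the differential of the embedding. By \ref{eq:repre-TxX} the inclusion $T_x\X \hookrightarrow T_x\Gm = \Hom(Q\spcheck, S\spcheck)$ is given by $\partial/\partial z^e \mapsto \sum_{\RNij} f^j_{i,z^e}(x)\, \eta^i \otimes \zeta_j$, whose coordinate matrix is the very same Jacobian $\bigl(f^j_{i,z^e}(x)\bigr)$. At a general point the $z^e$ are regular parameters, so the $\partial/\partial z^e$ form a basis of $T_x\X$, and this inclusion is injective with $\dim T_x\X = \dim\X \geq 1$. Hence the Jacobian is nonzero, so some $f^j_{i,z^e}(x) \neq 0$; by the previous step $\phi_x \neq 0$, and therefore $\mpl - m = \dim(\phi_x(S)) \geq 1$.

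I do not anticipate a genuine obstacle: once the parametrization is in hand, the proof reduces to the observation that a single matrix $\bigl(f^j_{i,z^e}\bigr)$ controls both $\phi_x$ and the differential of $\X \hookrightarrow \Gm$. The only points demanding care are the identification of the two bases used in \ref{eq:repre-phi_x-zeta} and \ref{eq:repre-TxX}, and the choice of a general $x$ at which $\X$ is smooth and $\phi_x$ attains its generic rank simultaneously, so that $\dim(\phi_x(S)) = \mpl - m$ may be combined directly with the injectivity coming from $\dim\X > 0$.
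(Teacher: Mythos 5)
Your proof is correct and is essentially the paper's own argument: both rest on the formula $\phi_x(\zeta^j) = \sum f^j_{i,z^e}(x)\, dz^e \otimes \eta^i$, so that $\phi_x = 0$ forces the Jacobian $(f^j_{i,z^e})$ to vanish, contradicting (via \ref{eq:repre-TxX}) that the $f^j_i$ parametrize the positive-dimensional embedding $\X \hookrightarrow \Gm$. The only difference is presentational — the paper argues by contradiction from $\mpl = m$, while you argue the contrapositive directly and spell out the identification of the two matrix representations.
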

\begin{proof}
  Assume $\mpl=m$. Then, as above, we have $\dim(\phi_x(S)) = 0$, which means that
  $\phi_x(\zeta^{j}) = 0$ for any $\RNj$. Then
  we have $f_{i,z^e}^j = 0$ for any $i,j,e$. This contradicts that $f_i^j$'s are regular functions parametrizing the embedding $\X \hookrightarrow \Gm$
  around the point $\xo$.
\end{proof}

\Step{2}
We set $\Y \subset \Gmp$ to be the closure of $\X$ under $\gamma$.
As in \autoref{thm:desc-Go}\ref{thm:desc-Go:VGm},
we set $V_{\Gm} := \cP(\sS_{\Gm})$ and consider the generalized conormal morphism
\[
\gamma^*\bpi|_{\Xo}: \cP(\ker\phi|_{\Xo}) \subset V_{\Gm} \rightarrow \Pv
\]
given in \autoref{thm:defn-conormal}.
Let $\ell_x$ be the fiber of $\cP(\ker\phi|_{\Xo}) \rightarrow \Xo$ at $x$,
and let $v \in \ell_x$ be a point.
Here $v$ is expressed as $((f^{j}_{i}(x))_{i,j}, (s_{m+1}: \dots: s_{N}))$.
Since $\sum_{\RNj} s_j \cdot \phi_x(\zeta^j) = \phi_x(\sum_{\RNj} s_j \zeta^j) = 0$,
the equality~\ref{eq:zeta-lin-dep} implies
\begin{align*}
  \sum_{\RNnu} s_{\nu} \cdot \phi_x(\zeta^{\nu})
  &= - \sum_{\RNmu} s_{\mu} \cdot \phi_x(\zeta^{\mu})
  \\
  &= \sum_{\RNnu, \RNmu} -s_{\mu} g^{\mu}_{\nu}(x) \cdot \phi_x(\zeta^{\nu}).
\end{align*}
Thus $s_{\nu} = \sum_{\RNmu} -s_{\mu} g^{\mu}_{\nu}$ for $\RNnu$.
Then it follows from \ref{eq:hyp-pl-defpl} that
each point $v \in \ell_x$ is sent to the hyperplane $\gamma^*\bpi(v) \in \Pv$
which is defined by the homogeneous polynomial
\begin{multline}\label{eq:hyp-pl-defpl-2}
  \sum_{\substack{\RNi\\ \RNnu}}
  \sum_{\RNmu} -s_{\mu} g^{\mu}_{\nu} f^{\nu}_i(x) \cdot Z^i
  +
  \sum_{\substack{\RNi\\ \RNmu}} s_{\mu} f^{\mu}_i(x) \cdot Z^i
  \\
  + \sum_{\substack{\RNnu\\ \RNmu}}  s_{\mu} g^{\mu}_{\nu}(x) \cdot Z^{\nu}
  + \sum_{\RNmu} - s_{\mu} \cdot Z^{\mu}.
\end{multline}
Note that, for the $\mpl$-plane $\gamma(x) \subset \PN$,
the image $\gamma^*\bpi(\ell_x)$ is equal to
$\gamma(x)^* \subset \Pv$, the set of hyperplanes containing $\gamma(x)$.

Now, the parametrization of the expanding map $\gamma: \X \dashrightarrow \Y$
is obtained, as follows:
Let $\sQ_{\Gmp}$ and $\sS_{\Gmp}$ be the universal quotient bundle and subbundle of rank $\mpl+1$ and $N-\mpl$
on $\Gmp$.
In a similar way to \autoref{thm:desc-Go}\ref{thm:desc-Go:Go},
we take a standard open subset $\Gomp$ as the set of $\mpl$-planes
not intersecting the $(N-\mpl-1)$-plane $(Z^0 = \dots = Z^{\mpl} = 0)$.
Then $\sQ_{\Gomp}$ and $\sS_{\Gomp}\spcheck$
are equal to $\Qp \otimes \sO_{\Gomp}$ and $\Sp\spcheck \otimes \sO_{\Gomp}$, for vector spaces
\[
\Qp = \bigoplus_{\RNlambda} K\cdot q^{\lambda} \textand
\Sp\spcheck = \bigoplus_{\RNmu} K\cdot s_{\mu},
\]
where $q^{\lambda}$ and $s_{\mu}$ correspond to $Z^{\lambda}$ and $Z_{\mu}$.

In this setting, by \ref{eq:hyp-pl-defpl} and \ref{eq:hyp-pl-defpl-2},
$\gamma(x) \in \Gomp = \Hom(\Qp\spcheck, \Sp\spcheck)$ is expressed as
\begin{equation}\label{eq:desc-expanding-map}
  \sum_{\substack{\RNi\\ \RNmu}}
  (f^{\mu}_i +   \sum_{\RNnu} - g^{\mu}_{\nu} f^{\nu}_i)(x) \cdot q^i \otimes s_{\mu}
  + \sum_{\substack{\RNnu\\ \RNmu}}  g^{\mu}_{\nu}(x) \cdot q^{\nu} \otimes s_{\mu}
\end{equation}
for a point $x \in \X$ near $\xo$.

\begin{ex}\label{thm:exa-expshr}
  \mbox{}
  \begin{inparaenum}[(i)]
  \item \label{thm:exa-expshr-0}
    We set $\X \subset \Gr(1,\PP^4)$ to be the surface which is the closure of the image of
    a morphism $\Spec K[z^1,z^2] \rightarrow \Go_1$ defined by
    \begin{equation}\label{eq:exa-para-X}
      (z^1,z^2) \mapsto
      (f_i^j)_{0 \leq i \leq 1, 2 \leq j \leq 4} = 
      \begin{bmatrix}
        f_0^2 & f_0^3 & f_0^4
        \\
        f_1^2 & f_1^3 & f_1^4
      \end{bmatrix}
      =
      \begin{bmatrix}
        z^1 & z^2 & 2z^1z^2
        \\
        0 & z^1 & (z^1)^2
      \end{bmatrix},
    \end{equation}
    where $(z^e)^a$ means that the $a$-th power of the parameter $z^e$, and so on.
    Assume that the characteristic is not $2$.

  \item \label{thm:exa-expshr-a}
    Let $\gamma_{\X}: \X \dashrightarrow \Gr(\mpl, \PP^4)$ be the expanding map of $\X$.
    Then $\mpl = 3$ and $\gamma_{\X}$ is expressed on $\Go_{3}$ by
    \begin{equation}\label{eq:exa-desc-exp}
      \trans
      \begin{bmatrix}
        -2z^1z^2 &  -(z^1)^2 & 2z^2 & 2z^1
      \end{bmatrix}.
    \end{equation}

    This is calculated as follows:
    First, we take the following $4 \times 3$ matrix $A$:
    \[
    A =
    \begin{bmatrix}
      (f_{i,z^1}^j)
      \\
      (f_{i,z^2}^j)
    \end{bmatrix}
    \ \text{ with }\ \,
    (f_{i,z^1}^j) =
    \begin{bmatrix}
      1 & 0 & 2z^2
      \\
      0 & 1 & 2z^1
    \end{bmatrix},\
    (f_{i,z^2}^j) =
    \begin{bmatrix}
      0 & 1 & 2z^1
      \\
      0 & 0 & 0
    \end{bmatrix}.
    \]
    By \ref{eq:repre-phi_x-zeta}, the rank of $\phi_x$ coincides with that of $A$,
    which is equal to $2$.
    Since $\dim(\phi_x(S)) = \mpl-m$, and since $m = 1$ in the setting,
    we have $\mpl = 3$.
    Moreover, the following equality holds in the matrix $A$:
    \[
    2z^2
    \begin{bmatrix}
      1\\0\\0\\0
    \end{bmatrix}
    + 2z^1
    \begin{bmatrix}
      0\\1\\1\\0
    \end{bmatrix}
    =
    \begin{bmatrix}
      2z^2\\2z^1\\2z^1\\0
    \end{bmatrix}.
    \]
    It follows from \ref{eq:f-mu-i-ze} that
    $g_2^4 = 2z^2, g_3^4 = 2z^1$.
    Now, \ref{eq:desc-expanding-map} yields the expression \ref{eq:exa-desc-exp} of $\gamma_{\X}$.
    More precisely, the calculation of \ref{eq:hyp-pl-defpl-2} is given by
    \[
    -g_2^4
    \begin{bmatrix}
      f_0^2
      \\
      f_1^2
      \\
      -1
      \\
      0
      \\
      0
    \end{bmatrix}
    -g_3^4
    \begin{bmatrix}
      f_0^3
      \\
      f_1^3
      \\
      0
      \\
      -1
      \\
      0
    \end{bmatrix}
    +
    \begin{bmatrix}
      f_0^4
      \\
      f_1^4
      \\
      0
      \\
      0
      \\
      -1
    \end{bmatrix}
    =
    \begin{bmatrix}
      -2z^1z^2
      \\
      -(z^1)^2
      \\
      2z^2
      \\
      2z^1
      \\
      -1
    \end{bmatrix}.
    \]

  \item\label{thm:exa-expshr-b}
    Let $\Y \subset \Gr(3, \PP^4)$ be the surface which is the closure of
    the image of $\X$ under $\gamma_{\X}$.
    Then the shrinking map $\sigma$ of $\Y$
    is a map from $\Y$ to $\Gr(1, \PP^4)$ and is indeed expressed on $\Go_1$ by
    \ref{eq:exa-para-X}.
    Hence the closure of the image of $\Y$ under $\sigma$ is equal to $\X$,
    and $\sigma \circ \gamma_{\X}$ coincides with the identity map
    on an open subset of $\X$.

    We note that
    one can calculate the expression of $\sigma$
    in a similar way to \ref{thm:exa-expshr-a},
    since $\sigma$ is identified with
    $\gamma_{\bar\Y/\Gr(0, \Pv[4])}$
    as in \autoref{thm:dual-exp-shr-def}.
    Here $\bar\Y \subset \Gr(0, \Pv[4]) = \Pv[4]$
    is the subvariety corresponding to $\Y \subset \Gr(3, \PP^4)$,
    and is parametrized by
    \[
    (1: -2z^1 : -2z^2 : (z^1)^2 : 2z^1z^2) = (-1: 2z^1 : 2z^2 : -(z^1)^2 : -2z^1z^2),
    \]
    where  the right hand side is given by
    transposing and reversing \ref{eq:exa-desc-exp}.
    
  \item\label{thm:exa-expshr-c}
    For later explanations,
    we consider a $3$-fold $X \subset \PP^4$ which is 
    the image of the projection $\pi_{\X}: U_{\X} \rightarrow \PP^4$,
    where
    $U_{\X} \subset \X \times \PP^4$ is the universal family of $\X$.
    As in \autoref{thm:desc-Go}\ref{thm:desc-Go:UGm},
    it follows from \ref{eq:exa-para-X} that
    $X$ is the closure of the image of a morphism
    $\Spec K[z^1,z^2, \eta^1] \rightarrow \PP^4$ defined by
    \[
    (1: \eta^1: z^1 : z^2 + \eta^1z^1 : 2z^1z^2 + \eta^1(z^1)^2).
    \]

  \item This $3$-fold $X \subset \PP^4$ is called a \emph{twisted plane} (see \cite[2.2.9]{FP}), and is defined by a homogeneous polynomial of degree $3$,
    \[
    (Z^0)^2Z^4+Z^1(Z^2)^2-2Z^0Z^2Z^3,
    \]
    where $(Z^0:Z^1: \dots: Z^4)$ is the homogeneous coordinates on $\PP^4$.

  \item\label{thm:exa-expshr-e}
    Let $\gamma_X: X \dashrightarrow \Gr(3, \PP^4)$ be the Gauss map of $X$.
    Then, in a similar way to \ref{thm:exa-expshr-a}, one can obtain the expression of $\gamma_X$,
    which indeed coincides with \ref{eq:exa-desc-exp}.
    In particular, the closure of the image of $X$ under $\gamma_X$
    is equal to $\Y$. 
  \end{inparaenum}
\end{ex}

\begin{ex}\label{thm:exa2-expshr}
  \mbox{}
  We set $\X \subset \Gr(1,\PP^5)$ to be the surface which is the closure of the image of
  a morphism $\Spec K[z^1,z^2] \rightarrow \Go_1$ defined by
  \begin{equation*}
    (z^1,z^2) \mapsto
    \begin{bmatrix}
      f_0^2 & f_0^3 & f_0^4 & f_0^5
      \\
      f_1^2 & f_1^3 & f_1^4 & f_1^5
    \end{bmatrix}
    =
    \begin{bmatrix}
      z^1 & z^2 & a\cdot (z^1)^{a-1}z^2 & h
      \\
      0 & z^1 & (z^1)^a & 0
    \end{bmatrix},
  \end{equation*}
  where $a$ is an integer greater than $1$, and where $h \in K[z^1]$.
  Assume that $a(a-1)$ is not divided by the characteristic.
  Then one can calculate several maps in a similar way to \autoref{thm:exa-expshr}.
  For instance,
  the expanding map $\gamma_{\X}$ is a map from $\X$ to $\Gr(3, \PP^5)$
  and is expressed on $\Go_{3}$ by
  \begin{equation*}
    \trans[2]
    \begin{bmatrix}
      -a(a-1) \cdot (z^1)^{a-1}z^2
      &
      -(a-1) \cdot (z^1)^a
      &
      a(a-1) \cdot (z^1)^{a-2}z^2
      &
      a\cdot (z^1)^{a-1}
      \\
      h-z^1h_{z^1}
      &
      0
      &
      h_{z^1}
      &
      0
    \end{bmatrix}.
  \end{equation*}
\end{ex}

\subsection{Composition of expanding maps and shrinking maps}
\label{sec:comp-expand-maps}

We set
$\X \subset \Gm$ to be a quasi-projective smooth variety,
and set $\Y \subset \Gmp$ to be the closure of 
the expanding map $\gamma : \X \dashrightarrow \Gmp$.
For this $\Y$, we will investigate the homomorphism $\Phi$
given in \autoref{thm:def-shr-map},
by considering
the pull-back of $\Phi$ via $\gamma$.
Now we have the following commutative diagram:
\begin{equation}\label{eq:Psi-factor}
  \xymatrix{    \gamma^*\sQ_{\Y}\spcheck
    \ar[r]
    \ar@/_1.7pc/@<0ex>[rrd]_{\Psi}
    \ar@/^1.6pc/@<0.5ex>[rr]^{\gamma^*\Phi}
    & 
    \sHom(\sHom(\gamma^*\sQ_{\Y}\spcheck, \gamma^*\sS_{\Y}\spcheck), \gamma^*\sS_{\Y}\spcheck)
    \ar[r]
    \ar[rd]_{- \circ d\gamma}
    & \sHom(\gamma^*T_\Y, \gamma^*\sS_{\Y}\spcheck) \ar[d] 
    \\
    && \sHom(T_\X, \gamma^*\sS_{\Y}\spcheck),
  }\end{equation}
where
$\Psi: \gamma^*\sQ_{\Y}\spcheck 
\xrightarrow{\gamma^*\Phi} \sHom(\gamma^*T_\Y, \gamma^*\sS_{\Y}\spcheck)
\rightarrow \sHom(T_\X, \gamma^*\sS_{\Y}\spcheck)$
is the composite homomorphism,
and
\[
d\gamma : T_\X \rightarrow \gamma^*T_{\Gmp} \simeq \sHom(\gamma^*\sQ_{\Y}\spcheck, \gamma^*\sS_{\Y}\spcheck)
\]
is the homomorphism of tangent bundles induced by $\gamma$.

We recall that a rational map $f: A \dashrightarrow B$ of varieties is said to be \textit{separable}
if the field extension $K(A)/K(f(A))$ is separably generated.
Here, the following three conditions are equivalent:
(i) $f$ is separable;
(ii) the linear map $d_x f : T_x A \rightarrow T_{f(x)} f(A)$ of Zariski tangent spaces
is surjective for general $x \in A$;
(iii) a general fiber of $f$ is reduced.
In characteristic zero, every rational map must be separable.
A rational map is said to be \textit{inseparable} if it is not separable.

\begin{rem}\label{thm:kernel-rem}
  If $\gamma$ is separable, then we have
  \begin{equation*}
    \ker \gamma^*\Phi|_{\Xo} = \ker \Psi|_{\Xo}
  \end{equation*}
  for a certain open subset $\Xo \subset \X$.
  This is because
  the vertical arrow in \ref{eq:Psi-factor},
  $\Hom(T_{\gamma(x)}\Y, \sS_{\Y}\spcheck \otimes \gamma(x))
  \rightarrow 
  \sHom(T_x\X, \sS_{\Y} \spcheck \otimes \gamma(x))$, 
  is injective
  at a general point $x \in \X$.
\end{rem}

Let $\xo \in \X$ be a general point.
In the setting of \autoref{sec:param-expa-maps}, for a point $x \in \X$ near $\xo$,
it follows from \ref{eq:desc-expanding-map} that
$d_x\gamma: T_x\X \rightarrow T_{\gamma(x)}\Gmp$ is represented by
\begin{equation}\label{eq:expanding-map-repre-dgamma}
  \frac{\partial}{\partial z^{e}}
  \mapsto
  \sum_{\substack{\RNi\\ \RNmu}}
  \sum_{\RNnu} - g^{\mu}_{\nu, z^e} f^{\nu}_i(x) \cdot q^i \otimes s_{\mu} 
  + \sum_{\substack{\RNnu\\ \RNmu}} g^{\mu}_{\nu, z^e}(x) \cdot q^{\nu} \otimes s_{\mu},
\end{equation}
where we apply the following equality obtained by \ref{eq:f-mu-i-ze}:
\[
(f^{\mu}_i +   \sum_{\RNnu} - g^{\mu}_{\nu} f^{\nu}_i)_{z^{e}}
= \sum_{\RNnu} - g^{\mu}_{\nu, z^{e}} f^{\nu}_i.
\]
Then, from the diagram~\ref{eq:Psi-factor},
the linear map
$\Psi_x: \Qp\spcheck \rightarrow \Hom(T_x\X, \Sp\spcheck)$
is represented by
\begin{equation}\label{eq:expanding-map-repre-phi-x}
  \begin{aligned}
    \Psi_x(q_i) &=
    \sum_{\substack{\RNe\\ \RNmu}}
    \sum_{\RNnu} - g^{\mu}_{\nu, z^e} f^{\nu}_i(x) \cdot dz^e \otimes s_{\mu}
    & (\RNi),
    \\
    \Psi_x(q_{\nu}) &=
    \sum_{\substack{\RNe\\ \RNmu}} g^{\mu}_{\nu, z^e}(x) \cdot dz^e \otimes s_{\mu}
    & (\RNnu).
  \end{aligned}
\end{equation}

\begin{lem}\label{thm:rk-d-gamma-and-psi}
  The ranks of the above linear maps are obtained as follows.
  \begin{enumerate}
  \item \label{thm:rk-d-gamma-and-psi:a}
    $\rk d_{x}\gamma$ is equal to the rank of the $\dim(\X) \times (N-\mpl)\cdot (\mpl-m)$ matrix
    \[
    \begin{bmatrix}
      g^{\mpl+1}_{m+1, z^{1}}(x) & \cdots & g^{\mu}_{\nu, z^{1}}(x)
      & \cdots & g^{N}_{\mpl, z^{1}}(x)
      \\
      \vdots && \vdots && \vdots
      \\
      g^{\mpl+1}_{m+1, z^{\dim(\X)}}(x) & \cdots & g^{\mu}_{\nu, z^{\dim(\X)}}(x)
      & \cdots & g^{N}_{\mpl, z^{\dim(\X)}}(x)
    \end{bmatrix}.
    \]

  \item \label{thm:rk-d-gamma-and-psi:b}
    $\rk \Psi_x$ is equal to the rank of the $(\mpl-m) \times (N-\mpl)\cdot \dim(\X)$ matrix
    \[
    \begin{bmatrix}
      g^{\mpl+1}_{m+1, z^{1}}(x) & \cdots & g^{\mu}_{m+1, z^{e}}(x) & \cdots & g^{N}_{m+1, z^{\dim(\X)}}(x)
      \\
      \vdots && \vdots && \vdots
      \\
      g^{\mpl+1}_{\mpl, z^{1}}(x) & \cdots & g^{\mu}_{\mpl, z^{e}}(x) & \cdots & g^{N}_{\mpl, z^{\dim(\X)}}(x)
    \end{bmatrix}.
    \]
  \end{enumerate}
\end{lem}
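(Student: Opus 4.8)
The plan is to write down the coordinate matrices of $d_x\gamma$ and $\Psi_x$ directly from \ref{eq:expanding-map-repre-dgamma} and \ref{eq:expanding-map-repre-phi-x} in the bases fixed in \autoref{sec:param-expa-maps}, and then to observe that in each case one block of columns (resp.\ rows) is a scalar combination of the complementary block; the surviving block is exactly the matrix appearing in the statement.

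For \ref{thm:rk-d-gamma-and-psi:a}, I regard $d_x\gamma(\partial/\partial z^e)$ as a vector in $T_{\gamma(x)}\Gmp = \Hom(\Qp\spcheck, \Sp\spcheck)$ with coordinates in the basis $q^{\lambda}\otimes s_{\mu}$ ($\RNlambda$, $\RNmu$). By \ref{eq:expanding-map-repre-dgamma}, for $\RNnu$ the $(\nu,\mu)$-coordinate is $g^{\mu}_{\nu,z^e}(x)$, and these assemble into the matrix of \ref{thm:rk-d-gamma-and-psi:a}, whereas for $\RNi$ the $(i,\mu)$-coordinate is $\sum_{\RNnu} -g^{\mu}_{\nu,z^e}(x) f^{\nu}_i(x)$. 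The key point is that, for each fixed $\mu$, the column of the full matrix indexed by $(i,\mu)$ is the linear combination, with coefficients $-f^{\nu}_i(x)$, of the columns indexed by $(\nu,\mu)$ ($\RNnu$); crucially these coefficients do not depend on the row index $e$. Hence every column with $\RNi$ lies in the span of the columns with $\RNnu$, so the column space, and thus the rank, is unchanged upon deleting them, and equals the rank of the matrix in \ref{thm:rk-d-gamma-and-psi:a}.

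For \ref{thm:rk-d-gamma-and-psi:b} the argument is the transpose of this. From \ref{eq:expanding-map-repre-phi-x} I write the matrix of $\Psi_x\colon \Qp\spcheck \to \Hom(T_x\X, \Sp\spcheck)$ with rows indexed by the basis $q_{\lambda}$ ($\RNlambda$) of $\Qp\spcheck$ and columns indexed by the target basis $dz^e \otimes s_{\mu}$. The rows with $\RNnu$ carry the entries $g^{\mu}_{\nu,z^e}(x)$ and form the matrix of \ref{thm:rk-d-gamma-and-psi:b}, while each row with $\RNi$ is the linear combination, with coefficients $-f^{\nu}_i(x)$, of the rows indexed by $\nu$, the coefficients again being independent of both $e$ and $\mu$. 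Thus these rows are redundant, and the rank equals that of the displayed submatrix.

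The computation involves no genuine obstacle: the only care needed is the bookkeeping of indices and signs. The decisive feature, apparent directly in \ref{eq:expanding-map-repre-dgamma} and \ref{eq:expanding-map-repre-phi-x}, is that the scalars $f^{\nu}_i(x)$ relating the $\RNi$ block to the $\RNnu$ block are independent of the summation indices $e$ and $\mu$, so that the reduction to the essential block is a single honest linear dependence rather than a more delicate rank inequality.
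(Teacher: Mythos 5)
Your proof is correct and is essentially the same as the paper's: both write out the coordinate matrices from \ref{eq:expanding-map-repre-dgamma} and \ref{eq:expanding-map-repre-phi-x} and observe that the columns (resp.\ rows) indexed by $\RNi$ are linear combinations, with coefficients $-f^{\nu}_i(x)$ independent of the remaining indices, of those indexed by $\RNnu$, so deleting them leaves the rank unchanged. Your write-up just makes explicit the dependence structure that the paper's proof states more tersely.
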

\begin{proof}
  Considering the matrix description of \ref{eq:expanding-map-repre-dgamma},
  we find that, each column vector not belonging to the matrix of \ref{thm:rk-d-gamma-and-psi:a}
  is expressed as
  \[
  \begin{bmatrix}
    \sum_{\RNnu} -g^{\mu}_{\nu, z^1} f^{\nu}_i(x)\\
    \vdots\\
    \sum_{\RNnu} -g^{\mu}_{\nu, z^{\dim(X)}} f^{\nu}_i(x)
  \end{bmatrix}.
  \]
  This vector is linearly dependent on column vectors of
  the matrix of \ref{thm:rk-d-gamma-and-psi:a};
  hence the assertion of \ref{thm:rk-d-gamma-and-psi:a} follows.
  In the same way, considering the matrix description of
  \ref{eq:expanding-map-repre-phi-x}, we have the assertion of
  \ref{thm:rk-d-gamma-and-psi:b}.
\end{proof}

\begin{prop}\label{thm:QR-subset-psi}
  $\sQ_{\Xo} \spcheck \subset \ker \Psi|_{\Xo}$
  for a certain open subset $\Xo \subset \X$
\end{prop}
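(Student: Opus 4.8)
The plan is to realize $\sQ_{\Xo}\spcheck$ explicitly as a subbundle of the source $\gamma^*\sQ_{\Y}\spcheck$ of $\Psi$, and then to verify in the local coordinates of \autoref{sec:param-expa-maps} that $\Psi$ annihilates it. For the inclusion $\sQ_{\Xo}\spcheck \hookrightarrow \gamma^*\sQ_{\Y}\spcheck$ to even make sense, I first record the following: by \autoref{thm:basic-inclusion}, a general point $x \in \X$ satisfies $x \subset \gamma(x)$ as linear subvarieties of $\PN$. Since a linear form vanishing on $\gamma(x)$ also vanishes on $x$, the inclusion of planes induces a surjection $\gamma^*\sQ_{\Y} \twoheadrightarrow \sQ_{\X}$ of universal quotient bundles over $\Xo$, and dualizing yields the inclusion of subbundles $\sQ_{\Xo}\spcheck \hookrightarrow \gamma^*\sQ_{\Y}\spcheck$ (of ranks $m+1$ and $\mpl+1$) used in the statement.

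Next I would compute this inclusion in the standard charts $\Gom$ and $\Gomp$. With the bases $\eta_i$ ($\RNi$) of $\sQ_{\X}\spcheck$ and $q_\lambda$ ($\RNlambda$) of $\gamma^*\sQ_{\Y}\spcheck$ as in \autoref{thm:desc-Go} and \autoref{sec:param-expa-maps}, the description in \autoref{thm:desc-Go}\ref{thm:desc-Go:Go} shows that the coordinate $Z^\nu$ restricts on the $m$-plane $x$ to $\sum_{\RNi} f^\nu_i(x)\,Z^i$ for $\RNnu$. Hence the surjection $\gamma^*\sQ_{\Y} \to \sQ_{\X}$ sends $q^i \mapsto \eta^i$ for $\RNi$ and $q^\nu \mapsto \sum_i f^\nu_i\, \eta^i$ for $\RNnu$. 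Transposing, the inclusion $\sQ_{\Xo}\spcheck \hookrightarrow \gamma^*\sQ_{\Y}\spcheck$ is
\[
\eta_i \mapsto q_i + \sum_{\RNnu} f^\nu_i \cdot q_\nu \qquad (\RNi).
\]

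Finally I would substitute into the explicit description \ref{eq:expanding-map-repre-phi-x} of $\Psi_x$ and observe the cancellation. Since $\Psi_x(q_i) = \sum_{e,\mu}\sum_{\nu} -g^{\mu}_{\nu, z^e} f^{\nu}_i(x)\, dz^e \otimes s_\mu$ for $\RNi$ and $\Psi_x(q_\nu) = \sum_{e,\mu} g^{\mu}_{\nu, z^e}(x)\, dz^e \otimes s_\mu$ for $\RNnu$, the image of $\eta_i$ is $\Psi_x(q_i) + \sum_{\RNnu} f^\nu_i(x)\,\Psi_x(q_\nu)$, whose coefficient of each $dz^e \otimes s_\mu$ is $-\sum_{\nu} g^{\mu}_{\nu, z^e} f^{\nu}_i + \sum_{\nu} f^{\nu}_i g^{\mu}_{\nu, z^e} = 0$. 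Thus $\Psi_x$ kills $\sQ_{\X}\spcheck \otimes k(x)$ at every general $x$, which gives $\sQ_{\Xo}\spcheck \subset \ker \Psi|_{\Xo}$ on the open set where $\gamma$ and the parametrization are defined.

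I expect the only delicate point to be the middle step, namely correctly matching the two standard charts on $\Gm$ and $\Gmp$ and reading off the coordinate form $\eta_i \mapsto q_i + \sum_{\nu} f^\nu_i q_\nu$ of the inclusion; once this is pinned down, the concluding cancellation is immediate from \ref{eq:expanding-map-repre-phi-x} and does not even require invoking \ref{eq:f-mu-i-ze}.
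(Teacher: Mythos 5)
Your proposal is correct and follows essentially the same route as the paper: both arguments come down to the local parametrization of \autoref{sec:param-expa-maps} and the explicit formula \ref{eq:expanding-map-repre-phi-x} for $\Psi_x$. The only difference is where the evaluation happens: the paper shortcuts your middle step by working at the normalized point $\xo$, where $f^j_i(\xo)=0$ makes $\sQ_{\X}\spcheck \otimes k(\xo) = \lin{q_0, \dots, q_m}$ and $\Psi_{\xo}(q_0) = \cdots = \Psi_{\xo}(q_m) = 0$ immediate from \ref{eq:expanding-map-repre-phi-x}, whereas you verify the same cancellation identically over the whole chart via the inclusion $\eta_i \mapsto q_i + \sum_{\RNnu} f^{\nu}_i \cdot q_{\nu}$, which you compute correctly.
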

\begin{proof} It is sufficient to show that
  $\sQ_{\X}\spcheck \otimes k(\xo) \subset \ker \Psi_{\xo}$ for a general point $\xo \in \X$.
  In the setting of \autoref{sec:param-expa-maps},
  since $f^j_i(\xo) = 0$, it follows from \ref{eq:expanding-map-repre-phi-x} that
  \[
  \Psi_{\xo}(q_0) = \cdots = \Psi_{\xo}(q_{m}) = 0.
  \]
  This implies that
  $\sQ_{\X}\spcheck \otimes k(\xo) \subset \ker \Psi_{\xo}$
  in $H^0(\PN, \sO(1))$.
\end{proof}

Let $\sigma = \sigma_{\Y/\Gmp}: \Y \dashrightarrow \Gmz$ be the shrinking map of $\Y \subset \Gmp$,
where we set
\[
m_0 := (\mpl)^-.
\]

\begin{cor}\label{thm:QR-subset-psi:cor}
  Assume that $\gamma$ is separable and
  assume that $\Psi_x$ is of rank $\mpl-m$ for general $x \in \X$.
  Then $m_0 = m$ and $\sQ|_{\Xo}\spcheck = \ker \gamma^*\Phi|_{\Xo}$ for a certain open subset $\Xo$;
  hence $\sigma \circ \gamma|_{\Xo}$ is an identity map of $\Xo \subset \X$.
\end{cor}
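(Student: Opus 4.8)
The plan is to pin down the kernel bundle $\ker\gamma^*\Phi$ explicitly and then read off both $m_0=m$ and the identity statement from it. The three inputs are separability of $\gamma$, which by \autoref{thm:kernel-rem} gives $\ker\gamma^*\Phi|_{\Xo}=\ker\Psi|_{\Xo}$ on a suitable open $\Xo$; the inclusion $\sQ_{\Xo}\spcheck\subseteq\ker\Psi|_{\Xo}$ of \autoref{thm:QR-subset-psi}; and the rank hypothesis $\rk\Psi_x=\mpl-m$.

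First I would compute the fibre dimension of $\ker\Psi$. The domain $\gamma^*\sQ_{\Y}\spcheck$ is, on the standard chart, $\Qp\spcheck\otimes\sO$ of rank $\mpl+1$, so the hypothesis $\rk\Psi_x=\mpl-m$ forces $\dim(\ker\Psi_x)=(\mpl+1)-(\mpl-m)=m+1$ at general $x$. Since $\sQ_{\X}\spcheck$ has rank $m+1$ and is contained in $\ker\Psi$ by \autoref{thm:QR-subset-psi}, the two agree on a general fibre, hence on a possibly smaller open $\Xo$; thus $\sQ|_{\Xo}\spcheck=\ker\Psi|_{\Xo}=\ker\gamma^*\Phi|_{\Xo}$, the last equality by separability.

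Next I would extract $m_0$. Shrinking $\Xo$ so that $\gamma(\Xo)\subset\Y\spcirc$ (possible since $\gamma$ is dominant and separable, so general points map to general points), the kernel $\ker\Phi|_{\Y\spcirc}=\sigma^*(\sQ_{\Gmz}\spcheck)$ is a subbundle of rank $m_0+1$, and $\ker\gamma^*\Phi|_{\Xo}=\gamma^*(\ker\Phi)$ then has rank $m_0+1$ as well. Comparing with the rank $m+1$ just obtained yields $m_0=m$, so the target $\Gmz$ of $\sigma$ is exactly the ambient $\Gm$ of $\X$.

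Finally, the identity statement follows by reading the subbundle equality as an equality of points of $\Gm$. The point $x\in\Gm$ is the $m$-plane cut out by the $(m+1)$-dimensional subspace $\sQ_{\X}\spcheck\otimes k(x)\subset H^0(\PN,\sO(1))\spcheck$, while $\sigma(\gamma(x))\in\Gmz=\Gm$ is cut out by $\ker\Phi\otimes k(\gamma(x))=\ker\gamma^*\Phi\otimes k(x)$. As these two $(m+1)$-dimensional subspaces coincide by the previous step, $\sigma(\gamma(x))=x$ for general $x$, i.e.\ $\sigma\circ\gamma|_{\Xo}$ is the identity on $\Xo$. The only delicate points are the bookkeeping of open sets, so that every sheaf in play is genuinely a subbundle of locally constant rank and $\gamma$ lands inside $\Y\spcirc$; the conceptual heart is the dimension count forcing $\ker\Psi=\sQ_{\X}\spcheck$, after which the remaining deductions are formal.
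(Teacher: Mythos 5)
Your proof is correct and follows essentially the same route as the paper: the inclusion $\sQ_{\Xo}\spcheck \subset \ker\Psi|_{\Xo}$ from \autoref{thm:QR-subset-psi}, the rank count $(\mpl+1)-(\mpl-m)=m+1$ to upgrade it to an equality, \autoref{thm:kernel-rem} to identify $\ker\Psi$ with $\ker\gamma^*\Phi$, and the universality of the Grassmannian to read the subbundle equality as $\sigma\circ\gamma=\mathrm{id}$. You merely make explicit two points the paper leaves implicit (the derivation of $m_0=m$ by comparing ranks of $\gamma^*(\ker\Phi)$, and the pointwise identification of $x$ with $\sigma(\gamma(x))$), which is fine.
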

\begin{proof}
  From \autoref{thm:QR-subset-psi}, we have $\sQ|_{\Xo}\spcheck \subset \ker \Psi|_{\Xo}$.
  Since $(\mpl+1) - \rk \Psi_x = (\mpl+1) - (\mpl-m) = m+1$,
  we have $\sQ|_{\Xo}\spcheck = \ker \Psi|_{\Xo}$.
  It follows from \autoref{thm:kernel-rem}
  that
  \[
  \sQ|_{\Xo}\spcheck = \ker \gamma^*\Phi|_{\Xo}.
  \]
  We recall that, by universality,
  the morphism $\sigma \circ \gamma: \Xo \rightarrow \Gmz$
  is induced from
  $\ker \gamma^*\Phi|_{\Xo} \subset H^0(\PN, \sO(1))\spcheck \otimes \sO_{\Xo}$. Therefore
  $\sigma \circ \gamma$ coincides with the original embedding $\X \hookrightarrow \Gm$.
\end{proof}

For the universal family $U_{\X} \subset \X \times \PN$,
we define a rational map
\[
\tilde\gamma: U_\X \dashrightarrow \Y \times \PN,
\]
by sending $(x, x') \in U_{\X}$ with $x \in \X$ and $x' \in \PN$ to $(\gamma(x), x') \in \Y \times \PN$.
Let $\sigma^*U_{\Gmz} \subset \Y \times \PN$ be the closure of the pull-back of $U_{\Gmz}$ under $\sigma$.

\begin{cor}\label{thm:im-tgamma-sigma-U0}
  Assume that $\gamma$ is separable.
  Then the image of $U_\X$ under $\tilde\gamma$ is contained in $\sigma^*U_{\Gmz}$,
  and hence we have the following inclusion of linear varieties of $\PN$:
  \[
  x \subset \sigma\circ\gamma (x) \subset \gamma(x).
  \]
\end{cor}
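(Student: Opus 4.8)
The plan is to reduce the asserted geometric inclusion to a comparison of subbundles of $H^0(\PN,\sO(1))\spcheck\otimes\sO_{\Xo}$ that is essentially already in hand, and then to projectivize. First I would unwind the definitions. By the description of $\sigma^*U_{\Gmz}$, a pair $(\gamma(x),x')$ lies in $\sigma^*U_{\Gmz}$ exactly when $x'$ belongs to the $m_0$-plane $\sigma(\gamma(x))$; and every point $(x,x')\in U_\X$ has $x'$ on the $m$-plane $x$. Hence the containment $\tilde\gamma(U_\X)\subset\sigma^*U_{\Gmz}$ is equivalent to the inclusion of linear varieties $x\subset\sigma(\gamma(x))$ for general $x\in\X$. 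So it suffices to establish this first inclusion; the second inclusion $\sigma(\gamma(x))\subset\gamma(x)$ is immediate from \autoref{thm:basic-inclusion} applied to the general point $\gamma(x)\in\Y$.

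To obtain $x\subset\sigma(\gamma(x))$, I would work with the dual quotient bundles. By \autoref{thm:QR-subset-psi} we have $\sQ_{\Xo}\spcheck\subset\ker\Psi|_{\Xo}$ on a suitable open $\Xo\subset\X$, and since $\gamma$ is separable, \autoref{thm:kernel-rem} gives $\ker\Psi|_{\Xo}=\ker\gamma^*\Phi|_{\Xo}$; hence $\sQ_{\Xo}\spcheck\subset\ker\gamma^*\Phi|_{\Xo}$. On the other hand, \autoref{thm:def-shr-map} yields $\ker\Phi|_{\Y\spcirc}=\sigma^*(\sQ_{\Gmz}\spcheck)$. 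Because the image of $\gamma$ is dense in $\Y$, I would shrink $\Xo$ so that $\gamma(\Xo)\subset\Y\spcirc$ and all the relevant kernels have constant rank; then pulling back the exact sequence defining $\ker\Phi$ along $\gamma$ identifies $\ker\gamma^*\Phi|_{\Xo}=(\sigma\circ\gamma)^*(\sQ_{\Gmz}\spcheck)$. Combining the two containments gives the inclusion of subbundles
\[
\sQ_{\Xo}\spcheck\subset(\sigma\circ\gamma)^*(\sQ_{\Gmz}\spcheck)
\]
inside $H^0(\PN,\sO(1))\spcheck\otimes\sO_{\Xo}$.

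Finally I would projectivize. Since $U_{\Xo}=\cP(\sQ_{\Xo}\spcheck)$ and $(\sigma\circ\gamma)^*U_{\Gmz}=\cP((\sigma\circ\gamma)^*\sQ_{\Gmz}\spcheck)$ are the corresponding universal families in $\Xo\times\PN$, the subbundle inclusion above induces a closed immersion $U_{\Xo}\subset(\sigma\circ\gamma)^*U_{\Gmz}$ over $\Xo$ (a surjection on duals induces a surjection of symmetric algebras, hence an inclusion of the associated $\Proj$'s). Reading this fiberwise over a general $x$ gives $x\subset\sigma(\gamma(x))$ as linear subvarieties of $\PN$, which is precisely what was needed. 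Passing to closures, and using that $\sigma^*U_{\Gmz}$ is closed, then upgrades the generic statement to $\tilde\gamma(U_\X)\subset\sigma^*U_{\Gmz}$, and the chain $x\subset\sigma\circ\gamma(x)\subset\gamma(x)$ follows together with the second paragraph.

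I expect the main obstacle to be the bookkeeping in the second step: one must verify that $\ker(\gamma^*\Phi)$ genuinely coincides with the pullback $\gamma^*(\ker\Phi)$, which requires restricting to a locus where $\Phi$ (equivalently $\sigma$) has constant-rank kernel and where $\gamma$ maps into that locus, and then checking that the several open subsets of $\X$ involved can be taken to agree. This is the one place where separability is essential, entering through \autoref{thm:kernel-rem}; once the subbundle inclusion is secured, the projectivization step is formal.
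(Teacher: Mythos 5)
Your proposal is correct and follows essentially the same route as the paper: both arguments combine \autoref{thm:QR-subset-psi} with \autoref{thm:kernel-rem} (where separability enters) to obtain $\sQ_{\Xo}\spcheck \subset \ker\gamma^*\Phi|_{\Xo}$, and then projectivize to conclude $U_{\Xo} \subset \cP(\ker\gamma^*\Phi|_{\Xo}) = \gamma^*\sigma^*U_{\Gmz}$, hence $\tilde\gamma(U_{\Xo}) \subset \sigma^*U_{\Gmz}$. Your extra care about identifying $\ker\gamma^*\Phi|_{\Xo}$ with $(\sigma\circ\gamma)^*(\sQ_{\Gmz}\spcheck)$ on a common open locus is exactly what the paper compresses into the single equality $\gamma^*\sigma^*U_{\Gmz} = \cP\ker(\gamma^*\Phi|_{\Xo})$.
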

\begin{proof}
  As in \autoref{thm:kernel-rem},
  $\ker \gamma^*\Phi|_{\Xo} = \ker \Psi|_{\Xo}$.
  Then \autoref{thm:QR-subset-psi}
  implies that $U_{\Xo}$ is contained in $\gamma^*\sigma^*U_{\Gmz} = \cP \ker(\gamma^*\Phi|_{\Xo})$.
  Hence we have $\tilde\gamma(U_{\Xo}) \subset \sigma^*U_{\Gmz}$.
\end{proof}

It is known that, in characteristic two, the Gauss map of every curve is inseparable.
This also happens to the expanding map, as follows.
\begin{lem}
  Assume that $\X \subset \Gm$ is a curve, and assume that
  $\gamma$ is generically finite.
  If the characteristic is two, then
  $\gamma$ is inseparable.
\end{lem}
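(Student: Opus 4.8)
The plan is to prove that the differential $d_x\gamma$ vanishes at a general point $x\in\X$ whenever the characteristic is two; granting this, separability fails by the criterion recalled just before \autoref{thm:kernel-rem}, because generic finiteness forces the image $\Y=\gamma\X$ to be a curve, so $T_{\gamma(x)}\Y$ is one-dimensional at a general smooth point and $d_x\gamma=0$ cannot surject onto it. Since $\dim\X=1$, \autoref{thm:rk-d-gamma-and-psi}\ref{thm:rk-d-gamma-and-psi:a} identifies $\rk d_x\gamma$ with the rank of the single-row matrix $\bigl[g^{\mu}_{\nu,z}(x)\bigr]$, so it suffices to show that every entry $g^{\mu}_{\nu,z}$ vanishes at a general point.

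The key input is the characteristic-two vanishing of second derivatives. Passing to the completion $\widehat{\sO}_{\X,\xo}\cong K[[z]]$ at a general $\xo\in\X$, every element $h$ is a power series in the regular parameter $z$, and $\partial^2(z^n)/\partial z^2 = n(n-1)z^{n-2}$ vanishes because $n(n-1)$ is even, hence zero in characteristic two; therefore $h_{zz}=0$ for all $h$. In particular $f^{j}_{i,zz}=0$ for all $\RNi$ and $\RNj$.

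With this in hand I would differentiate the relation \ref{eq:f-mu-i-ze}, which in the present curve case (a single parameter $z$) reads $f^{\mu}_{i,z}=\sum_{\RNnu} g^{\mu}_{\nu}\,f^{\nu}_{i,z}$. Differentiating once more and discarding the terms $f^{j}_{i,zz}=0$ leaves $0=\sum_{\RNnu} g^{\mu}_{\nu,z}(x)\,f^{\nu}_{i,z}(x)$ for every $\RNi$ and every $\RNmu$. As observed in Step 1 of \autoref{sec:param-expa-maps}, the vectors $\bigl(f^{\nu}_{i,z}(x)\bigr)_{i}$ for $\RNnu$ represent the basis $\phi_x(\zeta^{\nu})$ of $\phi_x(S)$ and are linearly independent at a general $x$; hence all $g^{\mu}_{\nu,z}(x)$ vanish. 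Thus the matrix of \autoref{thm:rk-d-gamma-and-psi}\ref{thm:rk-d-gamma-and-psi:a} is the zero matrix and $d_x\gamma=0$, which is what was needed.

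The entire argument is essentially the single observation that $\partial_z^2$ annihilates every function in characteristic two (the classical reason the Gauss map of a curve is then inseparable, here recovered through the coefficients $g^{\mu}_{\nu}$); the remainder is bookkeeping with the parametrization of \autoref{sec:param-expa-maps}. I foresee no genuine obstacle, but I would be careful to invoke the regularity of the $g^{\mu}_{\nu}$ and the linear independence of the $\bigl(f^{\nu}_{i,z}\bigr)_i$ only at a general point where the parametrization is valid, and to note that it is precisely generic finiteness that makes the conclusion \emph{inseparable} meaningful rather than vacuous.
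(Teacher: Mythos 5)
Your proposal is correct and follows essentially the same route as the paper: both arguments rest on the observation that second derivatives vanish identically in characteristic two, then differentiate the relation \ref{eq:f-mu-i-ze}, use the linear independence of the $\phi_x(\zeta^{\nu})$ (equivalently of the vectors $(f^{\nu}_{i,z})_i$) to force $g^{\mu}_{\nu,z}=0$, and conclude $\rk d_x\gamma=0$ via \autoref{thm:rk-d-gamma-and-psi}\ref{thm:rk-d-gamma-and-psi:a}. The only cosmetic difference is that you kill the term $\sum_{\RNnu} g^{\mu}_{\nu}f^{\nu}_{i,zz}$ by the vanishing of all second derivatives, whereas the paper evaluates at $\xo$ and uses $g^{\mu}_{\nu}(\xo)=0$; your write-up also makes explicit (correctly) why generic finiteness turns $d_x\gamma=0$ into inseparability.
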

\begin{proof}
  Let $\xo \in \X$ be a general point.
  In the setting of \autoref{sec:param-expa-maps},
  since $\X$ is a curve, it is locally parametrized around $\xo$ by one parameter $z$.
  It follows from
  \ref{eq:f-mu-i-ze} and $g^{\mu}_{\nu}(\xo) = 0$ that
  \[
  f^{\mu}_{i,z,z}(\xo) = \sum_{\RNnu} g^{\mu}_{\nu,z}f^{\nu}_{i,z}(\xo).
  \]
  In characteristic two, we have $f^{\mu}_{i,z,z}(\xo) = 0$.
  Since the above formula vanishes for each $\mu$ and $i$,
  it follows from \ref{eq:repre-phi_x-zeta} that 
  $\sum_{\RNnu} g^\mu_{\nu,z}(\xo) \cdot \phi_{\xo}(\zeta^{\nu}) = 0$.
  Since $\phi_{\xo}(\zeta^{\nu})$'s are linearly independent, $g^\mu_{\nu,z}(\xo) = 0$.
  By \autoref{thm:rk-d-gamma-and-psi}, $\rk d_{\xo} \gamma = 0$, that is to say,
  $\gamma$ is inseparable.
\end{proof}

\section{Duality with expanding maps and shrinking maps}
\label{sec:compo-expa-shr-maps}

Let $\sigma$ be the shrinking map
from a subvariety $\Y \subset \GM$ to $\Gmz$
with integers $M, m_0$ ($M \geq m_0$), as in \autoref{thm:def-shr-map} (we set $m_0:= \Mm$).
Let $\X_0 \subset \Gmz$ be the closure of the image of the map $\sigma$,
and let $\pi_0$ be the projection from the universal family
$U_{\X_0} \subset \X_0 \times \PN$ of $\X_0$
to $\PN$.
We set
\[
\sigma^*U_{\X_0} \subset \Y \times \PN
\]
to be the closure of
the pull-back of $U_{\X_0}$ under $\sigma$,
and set $\sigma^*\pi_0$ to be the projection from $\sigma^*U_{\X_0}$
to $\PN$.
Note that these constructions of $\sigma$ and $\sigma^*\pi_0$ depend only on $\Y$.
For a subvariety $X \subset \PN$,
we set
\[
\Gamma(X) := \overline{\set*{(\TT_xX, x)\in \GM \times \PN}{x \in X^{sm}}},
\]
the incidence variety
of embedded tangent spaces and their contact points,
where $X^{sm}$ is the smooth locus of $X$.

\begin{thm}[Main theorem]
  \label{thm:mainthm-2}

  Let $N, M$ be integers with $0 < M < N$,
  let $X \subset \PN$ be an $M$-dimension closed subvariety, and let
  $\Y \subset \GM$ be a closed subvariety.
  We set $\sigma$ as above, and so on.
  Then the following are equivalent:

  \begin{enumerate}
  \item
    The Gauss map
    $\gamma = \gamma_X: X \dashrightarrow \GM$ is separable, 
    and the closure of its image
    is equal to $\Y$.

  \item $\Gamma(X) = \sigma^*U_{\X_0}$ in $\GM \times \PN$.

  \item $\sigma^*\pi_0: \sigma^*U_{\X_0} \rightarrow \PN$
    is separable and generically finite,
    and its image is equal to $X$
    (in particular, the image is of dimension $M$.)
    
  \item[\textnormal{(c')}] $\sigma^*\pi_0: \sigma^*U_{\X_0} \rightarrow \PN$
    is separable
    and its image is equal to $X$,
    and $\sigma$ is separable.

  \end{enumerate}
\end{thm}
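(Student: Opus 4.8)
The plan is to realize the Gauss map $\gamma_X$ as the expanding map $\gamma_{X/\PN}$ with $m=0$ and $\mpl=M$ (\autoref{thm:expanding-map-Gauss-map}), to set $\Y=\overline{\gamma_X(X)}$, and to apply the constructions of \autoref{sec:comp-expand-maps}. Since $m=0$, the universal family $U_\X$ is the graph of the inclusion $X\hookrightarrow\PN$, so that $\overline{\tilde\gamma(U_\X)}=\Gamma(X)$ and the second projection $\Gamma(X)\to\PN$ is birational onto $X$. I would prove the cycle (a)$\Rightarrow$(b)$\Rightarrow$(c)$\Rightarrow$(a), and treat (c)$\Leftrightarrow$(c') separately.

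For (a)$\Rightarrow$(b), separability together with \autoref{thm:im-tgamma-sigma-U0} yields $\Gamma(X)=\overline{\tilde\gamma(U_\X)}\subseteq\sigma^{*}U_{\X_0}$, along with $x\subset\sigma\gamma(x)\subset\gamma(x)$. As both varieties are irreducible it suffices to compare dimensions: $\sigma^{*}U_{\X_0}\to\Y$ is a $\PP^{m_0}$-bundle, so $\dim\sigma^{*}U_{\X_0}=\dim\Y+m_0$, while $\dim\Gamma(X)=M$; thus equality reduces to the numerical identity $m_0=M-\dim\Y=:f$, which is the crux. On one side, separability and \autoref{thm:kernel-rem} give $\ker\Psi_x=\ker\gamma^{*}\Phi_x=\ker\Phi_{\gamma(x)}$, whence $\rk\Psi_x=(M+1)-(m_0+1)=M-m_0$; on the other, separability of $\gamma$ gives $\rk d_x\gamma=\dim\Y=M-f$. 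It then remains to see $\rk d_x\gamma=\rk\Psi_x$. By \autoref{thm:rk-d-gamma-and-psi} both are flattenings of the single array $g^{\mu}_{\nu,z^e}(\xo)$ --- the first distinguishing the parameter index $e$, the second the lower index $\nu$ --- and for $m=0$ both $\nu$ and $e$ run over $\{1,\dots,M\}$. Differentiating \ref{eq:f-mu-i-ze} at $\xo$ (where $g^{\mu}_{\nu}(\xo)=0$) and using symmetry of the mixed partials $f^{\mu}_{0,z^ez^{e'}}(\xo)$ shows, after normalising $f^{\nu}_{0,z^e}(\xo)=\delta^{\nu}_e$, that $g^{\mu}_{\nu,z^e}(\xo)$ is symmetric in $\nu\leftrightarrow e$ (the symmetry of the second fundamental form). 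The two flattenings therefore have equal rank, so $M-f=M-m_0$ and $m_0=f$, giving (b).

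The step (b)$\Rightarrow$(c) is immediate: under $\sigma^{*}U_{\X_0}=\Gamma(X)$ the morphism $\sigma^{*}\pi_0$ is the second projection $\Gamma(X)\to\PN$, which is birational onto $X$, hence separable, generically finite, with image $X$. The difficult direction is (c)$\Rightarrow$(a), where the Gauss data must be reconstructed from $\sigma$ alone. Here I would prove and invoke a generalized Monge--Segre--Wallace criterion for the generalized conormal morphism of \autoref{thm:defn-conormal}, to the effect that separability of $\sigma^{*}\pi_0$ is equivalent to each general $M$-plane $T\in\Y$ being tangent to $X$ along general points of $\sigma(T)\subset T$. Granting this, for general $z\in X$ choose $(T,z)\in\sigma^{*}U_{\X_0}$, so $z\in\sigma(T)\subset T$ by \autoref{thm:basic-inclusion}; tangency gives $\TT_zX\subseteq T$, and $\dim\TT_zX=M=\dim T$ forces $\TT_zX=T$. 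Thus $z\mapsto T$ is the Gauss map, its image is $\Y$, and reading the rank computation above backwards turns generic finiteness of $\sigma^{*}\pi_0$ into surjectivity of $d_z\gamma_X$, i.e.\ separability of $\gamma_X$. Establishing this tangency--separability dictionary is the main obstacle, being the one step that genuinely exceeds the formal manipulations of \autoref{sec:expa-map-subv}.

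Finally, (c)$\Leftrightarrow$(c') is a separability comparison around the factorisation $\sigma^{*}U_{\X_0}\to U_{\X_0}$ induced by $\sigma$, which is an isomorphism on each $\PP^{m_0}$-fibre and through which $\sigma^{*}\pi_0=\pi_0\circ(\,\cdot\,)$. Both (c) and (c') assume $\sigma^{*}\pi_0$ separable with image $X$, so the content is to trade ``$\sigma^{*}\pi_0$ generically finite'', i.e.\ $\dim\Y+m_0=M$, for ``$\sigma$ separable''. The forward direction is elementary: generic finiteness forces $\dim\X_0=\dim\Y$, and separability of $\sigma$ then follows since a subextension of the separable extension attached to $\sigma^{*}\pi_0$ is separable. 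For the converse I would again appeal to the tangency--separability dictionary, so that separability of $\sigma$ together with that of $\sigma^{*}\pi_0$ pins the general fibre of $\sigma^{*}\pi_0$ to a point. I expect this last equivalence to be routine once (c)$\Rightarrow$(a) is in hand.
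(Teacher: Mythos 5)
Your reductions (a)$\Rightarrow$(b)$\Rightarrow$(c) and (c)$\Rightarrow$(c') are sound and essentially reproduce the paper's own: the comparison of the two flattenings of the array $g^{\mu}_{\nu,z^e}(\xo)$ via symmetry of mixed partials (after normalizing $f^{\nu}_{0,z^e}=\delta^{\nu}_{e}$) is exactly the content of \autoref{thm:two-kernels}, where the paper phrases the same computation as the identification $\Psi = d\gamma(-1)\circ\xi$ using $g^{\mu}_{\nu}=f^{\mu}_{0,z^{\nu}}$, and your dimension count then matches the proof of \autoref{thm:mainthm-2a}; the step (b)$\Rightarrow$(c) is \autoref{thm:b-ac}.

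The genuine gap is in (c)$\Rightarrow$(a). Everything there hangs on your ``tangency--separability dictionary'' (separability of $\sigma^{*}\pi_0$ implies that a general $M$-plane $y\in\Y$ is tangent to $X$ along general points of $\sigma(y)$), which you state but explicitly defer, acknowledging it as ``the main obstacle.'' That dictionary is precisely the dual form (\autoref{thm:c-to-a-dual:rem}) of the paper's generalized Monge--Segre--Wallace criterion, \autoref{thm:rk-ineq}\ref{item:rk-ineq:b}, and proving it is the technical heart of the hard direction: the paper does it through the explicit local description \ref{eq:para-dgammapi-0} of the generalized conormal morphism, showing that $\im(d_{v}\gamma^*\bpi)$ lies in the span of $\partial/\partial\bar Z_{m+1},\dots,\partial/\partial\bar Z_{N-1}$ because $f^{j}_{i}(\xo)=0$. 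Nothing in your outline supplies this, so the crux of the theorem remains unproved. A second, fixable flaw: your claim that ``reading the rank computation backwards turns generic finiteness of $\sigma^{*}\pi_0$ into surjectivity of $d_z\gamma_X$'' is not a valid deduction, since the forward computation used separability of $\gamma_X$ as an input (through \autoref{thm:kernel-rem}) and so cannot be reversed. Once tangency is available, the correct argument is the paper's (proof of \autoref{thm:c-to-a-dual}): tangency identifies $\gamma_X\circ\sigma^{*}\pi_0$ generically with the projection $\sigma^{*}U_{\X_0}\rightarrow\Y$, which is separable; hence $K(X)/K(\Y)$ is a subextension of a separable extension and $\gamma_X$ is separable --- the same subextension trick you correctly invoke for (c)$\Rightarrow$(c'), applied on the other side of the diagram.
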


\begin{cor}\label{thm:mainthm-2-cor}
  Assume that one of the conditions (a-c') holds.
  Then $m_0 = M - \dim (\Y)$,
  and the diagram
  \begin{equation}\label{eq:maximal-developable-ruling}
    \begin{split}
      \xymatrix{        U_{\X_0} \ar[r]^{\pi_0} \ar[d] & X \ar@{-->}[d]^{\gamma_X}
        \\
        \X_0 \ar@{-->}[r]_{\gamma_{\X_0}} & \Y
      }    \end{split}
  \end{equation}
  is commutative,
  where ${\gamma_{\X_0}}$ is
  the expanding map of $\X_0$ and
  is indeed a birational map whose inverse is the shrinking map $\sigma$,
  and where $\sigma (y) \in \X_0$ corresponds to
  the closure of the fiber $\gamma_X^{-1}(y) \subset \PN$ for a general point $y \in \Y$.
\end{cor}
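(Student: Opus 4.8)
The plan is to exploit the equivalence in \autoref{thm:mainthm-2}: since conditions (a)--(c') are all equivalent, I may assume that they hold simultaneously and then read off the three assertions of the corollary one by one.

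First I would extract $m_0 = M-\dim\Y$ together with the description of $\sigma(y)$ directly from condition (b). The projection $\Gamma(X)\to X$ is birational (a general embedded tangent space determines its contact point), so $\dim\Gamma(X)=M$; on the other hand the projection $\sigma^*U_{\X_0}\to\Y$ has the $m_0$-plane $\sigma(y)$ as its fibre over a general $y$, whence $\dim\sigma^*U_{\X_0}=\dim\Y+m_0$. The identity $\Gamma(X)=\sigma^*U_{\X_0}$ then forces $m_0=M-\dim\Y$. Comparing the two sides of (b) fibrewise over a general $y\in\Y$ yields the last assertion at the same time: the fibre of $\Gamma(X)\to\Y$ maps onto $\overline{\gamma_X^{-1}(y)}$ in $\PN$, while that of $\sigma^*U_{\X_0}\to\Y$ is the plane $\sigma(y)$, so $\sigma(y)=\overline{\gamma_X^{-1}(y)}$.

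The heart of the corollary is the identification of $\gamma_{\X_0}$ with the inverse of $\sigma$, and for this it suffices to prove $\gamma_{\X_0}(\sigma(y))=y$ for general $y$. I would fix a general $x'\in\sigma(y)$; since $\sigma(y)=\overline{\gamma_X^{-1}(y)}$ we have $\TT_{x'}X=y$, so it is enough to sandwich $\gamma_{\X_0}(\sigma(y))$ between two copies of $\TT_{x'}X$. The inclusion $\gamma_{\X_0}(\sigma(y))\subseteq y$ I would obtain from the dual picture: by \autoref{thm:dual-exp-shr-def} the map $\sigma$ is the expanding map of $\bar\Y$ and $\gamma_{\X_0}$ the shrinking map of $\bar\X_0$, so applying \autoref{thm:im-tgamma-sigma-U0} to $\bar\Y$ (legitimate because $\sigma$ is separable by (c')) and reversing inclusions gives $\sigma(y)\subseteq\gamma_{\X_0}(\sigma(y))\subseteq y$. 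For the reverse inclusion $\TT_{x'}X\subseteq\gamma_{\X_0}(\sigma(y))$ I would use the local parametrisation of \autoref{sec:param-expa-maps}: writing $X=\pi_0(U_{\X_0})$ in the coordinates of \autoref{thm:desc-Go}, the tangent directions of $X$ transverse to $\sigma(y)$ at the point $[\eta]$ are the vectors $\sum_i\eta^i f^j_{i,z^e}Z_j$, each of which is annihilated by $\ker\phi_{\sigma(y)}$ and therefore lies in the plane $\gamma_{\X_0}(\sigma(y))$; since $\sigma(y)\subseteq\gamma_{\X_0}(\sigma(y))$ by \autoref{thm:basic-inclusion}, and since the separability packaged in (c)--(c') makes the parametrisation $U_{\X_0}\to X$ separable (so that these directions span all of $\TT_{x'}X$), I conclude $\TT_{x'}X\subseteq\gamma_{\X_0}(\sigma(y))$. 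The two inclusions collapse to $\gamma_{\X_0}(\sigma(y))=y$.

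Finally I would assemble the diagram. The identity $\gamma_{\X_0}\circ\sigma=\mathrm{id}_\Y$ forces $\sigma^*\colon K(\X_0)\to K(\Y)$ to be an isomorphism---injective because $\sigma$ is dominant, surjective because $\gamma_{\X_0}^*$ is a section---so $\sigma$ is birational and $\gamma_{\X_0}=\sigma^{-1}$. Commutativity of \ref{eq:maximal-developable-ruling} is then a point-chase: a general $(z_0,x')\in U_{\X_0}$ has $z_0=\sigma(y)$ with $x'\in\sigma(y)=\overline{\gamma_X^{-1}(y)}$, so the route through $\pi_0$ sends it to $\gamma_X(x')=y$ while the route through $\gamma_{\X_0}$ sends it to $\gamma_{\X_0}(z_0)=y$. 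I expect the main obstacle to be precisely the equality $\gamma_{\X_0}(\sigma(y))=y$, equivalently $(m_0)^+=M$: the dual application of \autoref{thm:im-tgamma-sigma-U0} is the easy half, whereas showing that the expanding plane of $\X_0$ is already large enough to contain $\TT_{x'}X$ is where the explicit parametrisation and the separability hidden in (c)--(c') must be handled with care.
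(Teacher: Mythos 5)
Your proposal is correct, but it reaches the crucial identity $\gamma_{\X_0}\circ\sigma=\mathrm{id}_{\Y}$ by a genuinely different route. The paper's proof is two lines: it invokes the dual statement of \autoref{thm:sigma-gamma-id} --- whose own proof runs through the rank inequality of \autoref{thm:rk-ineq} for the generalized conormal morphism and the kernel identification $\sQ\spcheck=\ker\Psi=\ker\gamma^*\Phi$ of \autoref{thm:QR-subset-psi:cor} --- and then quotes \autoref{cor-mainthm} for the identification of $\sigma(y)$ with $\overline{\gamma_X^{-1}(y)}$, while $m_0=M-\dim(\Y)$ is inherited from the proof of \autoref{thm:mainthm-2a}. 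You instead prove $\gamma_{\X_0}(\sigma(y))=y$ by a double inclusion: the easy half $\sigma(y)\subseteq\gamma_{\X_0}(\sigma(y))\subseteq y$ by dualizing \autoref{thm:im-tgamma-sigma-U0} (legitimate, since by \autoref{thm:dual-exp-shr-def} the map $\sigma$ is the expanding map of $\bar\Y$ and it is separable by (c')), and the hard half $y=\TT_{x'}X\subseteq\gamma_{\X_0}(\sigma(y))$ by the explicit parametrization of \autoref{sec:param-expa-maps}: the transverse derivative vectors $\sum_i\eta^i f^j_{i,z^e}Z_j$ are indeed annihilated by every element of $\ker\phi_{\sigma(y)}$, and separability of $\pi_0$ makes them, together with the plane $\sigma(y)$, span $\TT_{x'}X$. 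That computation is in substance the one the paper performs later in \autoref{thm:proj-UX-PN}\ref{thm:proj-UX-PN:b}, so your argument buys independence from the rank machinery of \autoref{sec:imag-separ-gauss-1} at the cost of re-deriving a Section 4 fact; your dimension count extracting $m_0=M-\dim(\Y)$ directly from condition (b) is likewise a clean alternative. Two small points to tighten: separability of $\pi_0$ should be justified by noting that a subextension of a finitely generated separably generated extension is separably generated (this is how it follows from separability of $\sigma^*\pi_0$); and $\Gamma(X)\to X$ is birational simply because the fiber over a smooth point $x$ is the single point $(\TT_xX,x)$ --- your parenthetical reason describes injectivity of the Gauss map over $\Y$, which is neither needed nor available at that stage.
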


In this setting,
we will call $(\X_0, X)$ the \emph{maximal} developable parameter space (see \autoref{thm:def-X_0}).

\begin{ex}\label{thm:exa-expshr:2}
  Let $X \subset \PP^4$ be the $3$-fold given
  in \ref{thm:exa-expshr-c} of \autoref{thm:exa-expshr}.
  Here, the above $\X_0$ is obtained as
  the surface $\X \subset \Gr(1, \PP^4)$ in \ref{thm:exa-expshr-0},
  which is equal to the image of $\Y$ under $\sigma$
  as in \ref{thm:exa-expshr-b}.
  By \ref{thm:exa-expshr-e}, we can directly verify that
  the diagram \ref{eq:maximal-developable-ruling} is commutative.
\end{ex}

In \autoref{sec:line-gener-fibers} we will show
the implication (a) $ \Rightarrow $ (b) of \autoref{thm:mainthm-2},
which leads to the linearity of general fibers of separable Gauss maps
(\autoref{cor-mainthm}).
In \autoref{sec:imag-separ-gauss-1} we will show
(c') $ \Rightarrow $ (a),
and complete the proof of \autoref{thm:mainthm-2}.
Here the implication  (c) or (c') $ \Rightarrow $ (a) gives a generalization of the characterization 
of Gauss images (\autoref{thm:cor-sep-gauss-image}).
We note that both implications (a) $ \Rightarrow $ (b) and (c') $ \Rightarrow $ (a)
will be discussed in the same framework,
given in \autoref{sec:expa-map-subv} (see \autoref{thm:c-to-a-dual:rem}).

\begin{rem}\label{thm:b-ac}
  (b) $ \Rightarrow $ (a) of \autoref{thm:mainthm-2} holds, as follows.
  Let $\tilde\gamma: X \dashrightarrow \Gamma(X)$
  be the rational map defined by $x \mapsto (\gamma(x), x)$.
  Since $\sigma^*U_{\X_0} \rightarrow \Y$ is separable
  and since $\tilde\gamma$ is birational,
  the composite map $\gamma$ is separable.
  In addition, (b) $ \Rightarrow $ (c) holds,
  since $\sigma^*\pi_0$ is identified with the birational projection
  $\Gamma(X) \rightarrow X$ under the assumption.
  On the other hand, (c) $ \Rightarrow $ (c') holds, since,
  if $\sigma^*\pi_0: \sigma^*U_{\X_0} \rightarrow \PN$ is separable and generically finite onto its image,
  then $\sigma^*U_{\X_0} \dashrightarrow U_{\X_0}$ is separable,
  and then so is $\sigma$.
\end{rem}

\subsection{Separable Gauss maps and shrinking maps}
\label{sec:line-gener-fibers}

In this subsection, we consider
the Gauss map $X \dashrightarrow \Gr(\dim(X), \PN)$ of
a quasi-projective smooth subvariety $X \subset  \PN$,
which coincides with the expanding map of $X$,
as in \autoref{thm:expanding-map-Gauss-map}.
We denote by $\gamma$ the map,
and by $\Y \subset \Gr(\dim(X), \PN)$ the closure of the image of $\gamma$.
In the setting of \autoref{sec:comp-expand-maps} with $m = 0$,
we have a natural homomorphism $\xi: \gamma^*\sQ_{\Y}\spcheck \rightarrow T_X$
in the following commutative diagram with exact rows and columns:
\begin{equation*}
  \xymatrix{    & 0 \ar[d] & 0 \ar[d]
    \\
    & \sO_X(-1) \ar[d] \ar@{=}[r]& \sO_X(-1) \ar[d]
    \\
    0 \ar[r] & \gamma^*\sQ_{\Y}\spcheck \ar[r] \ar[d]_{\xi}
    &  H^0(\PN, \sO(1))\spcheck \otimes \sO_X \ar[r] \ar[d] & \gamma^*\sS_{\Y}\spcheck \ar[r] \ar@{=}[d] & 0
    \\
    0 \ar[r] & T_X(-1) \ar[r]\ar[d]
    & T_{\PN}(-1)|_X \ar[r] \ar[d] & N_{X/\PN}(-1) \ar[r] & 0 \makebox[0pt]{\,,}
    \\
    &0 & 0
  }\end{equation*}
where the middle column sequence is induced from the Euler sequence of $\PN$.
We note that the diagram yields $\ker(\xi) = \sO_X(-1)$.

Recall that $\Psi$ is a homomorphism given in the diagram~\ref{eq:Psi-factor} in \autoref{sec:comp-expand-maps}.

\begin{prop}\label{thm:two-kernels}
  Assume that $m = 0$.
  Then we have an equality,
  \[
  \ker \Psi|_{X\spcirc} = \ker d\gamma(-1) \circ \xi|_{X\spcirc}
  \]
  for a certain open subset $X\spcirc \subset X$.
\end{prop}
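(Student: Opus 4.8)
The claim is local and pointwise, so I will verify $\ker\Psi_{\xo}=\ker(d_{\xo}\gamma\circ\xi_{\xo})$ at a general point $\xo\in X$ in the parametrization of \autoref{sec:param-expa-maps} with $m=0$, where $\gamma$ is the Gauss map (\autoref{thm:expanding-map-Gauss-map}) and $\mpl=\dim X=:n$. On the standard chart $\gamma^*\sQ_{\Y}\spcheck$ has frame $q_0,q_1,\dots,q_n$, and reading off the coordinates of $\gamma(x)$ from \ref{eq:desc-expanding-map}, the inclusion $\gamma^*\sQ_{\Y}\spcheck\hookrightarrow H^0(\PN,\sO(1))\spcheck\otimes\sO_X$ sends $q_0\mapsto Z_0+\sum_{\RNmu}(f^{\mu}_0-\sum_{\RNnu}g^{\mu}_{\nu}f^{\nu}_0)Z_{\mu}$ and $q_{\nu}\mapsto Z_{\nu}+\sum_{\RNmu}g^{\mu}_{\nu}Z_{\mu}$ (as in \autoref{thm:desc-Go}\ref{thm:desc-Go:Go}). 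The first step is to identify $\xi$. I will check that the tautological subsheaf $\ker\xi=\sO_X(-1)$ is generated locally by $q_0+\sum_{\RNnu}f^{\nu}_0q_{\nu}$, since its image is exactly the cone point $Z_0+\sum_{\RNj}f^{j}_0Z_j$ of \ref{eq:desc-Go:UGm}; and that, through the Euler sequence, $\xi(q_{\nu})$ is the vector $u_{\nu}:=\partial_{w^{\nu}}+\sum_{\RNmu}g^{\mu}_{\nu}\partial_{w^{\mu}}$ in the affine coordinates $w^j=Z^j/Z^0$. The relation \ref{eq:f-mu-i-ze} gives $\partial x/\partial z^e=\sum_{\RNnu}f^{\nu}_{0,z^e}u_{\nu}$, so each $u_{\nu}$ lies in $T_xX$, and since the covectors $\phi_x(\zeta^{\nu})$ are independent the matrix $F:=(f^{\nu}_{0,z^e})_{\nu,e}$ is invertible; thus $\{u_{\nu}\}$ is a frame of $T_xX$.

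Writing a local section as $v=\sum_{\RNlambda}c^{\lambda}q_{\lambda}$ and putting $d^{\nu}:=c^{\nu}-c^0f^{\nu}_0$, I obtain $\xi(v)=\sum_{\RNnu}d^{\nu}u_{\nu}$. Substituting this into the explicit formulas \ref{eq:expanding-map-repre-dgamma} for $d_x\gamma$ and \ref{eq:expanding-map-repre-phi-x} for $\Psi_x$ (as in the proof of \autoref{thm:rk-d-gamma-and-psi}) reduces both kernel conditions to linear algebra in $F$ and in the matrices $P^{\mu}:=(g^{\mu}_{\nu,z^e})_{\nu,e}$. One finds that $v\in\ker\Psi_{\xo}$ if and only if $(P^{\mu})^{\mathsf{T}}d=0$ for all $\mu$, whereas $\xi(v)\in\ker d_{\xo}\gamma$ if and only if $P^{\mu}\beta=0$ for all $\mu$, where $\beta=(F^{\mathsf{T}})^{-1}d$ records $\xi(v)$ in the frame $\partial/\partial z^e$. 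The twist by $\sO(-1)$ is harmless here, as both kernels are subsheaves of $\gamma^*\sQ_{\Y}\spcheck$.

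These two conditions agree because of a symmetry forced by \ref{eq:f-mu-i-ze}. Differentiating $f^{\mu}_{0,z^e}=\sum_{\RNnu}g^{\mu}_{\nu}f^{\nu}_{0,z^e}$ with respect to $z^{e'}$, antisymmetrizing in $(e,e')$ and cancelling the terms $\sum_{\nu}g^{\mu}_{\nu}f^{\nu}_{0,z^ez^{e'}}$ by equality of mixed partials, yields $\sum_{\RNnu}f^{\nu}_{0,z^e}g^{\mu}_{\nu,z^{e'}}=\sum_{\RNnu}f^{\nu}_{0,z^{e'}}g^{\mu}_{\nu,z^e}$, i.e. $FP^{\mu}$ is symmetric, $FP^{\mu}=(P^{\mu})^{\mathsf{T}}F^{\mathsf{T}}$. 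Hence $(P^{\mu})^{\mathsf{T}}=FP^{\mu}(F^{\mathsf{T}})^{-1}$ and $(P^{\mu})^{\mathsf{T}}d=FP^{\mu}\beta$, so invertibility of $F$ gives $(P^{\mu})^{\mathsf{T}}d=0$ for all $\mu$ if and only if $P^{\mu}\beta=0$ for all $\mu$, which is the desired equality of kernels. I expect the symmetry of $FP^{\mu}$ to be the crux: it is the integrability consequence of the equality of second derivatives and is exactly what forces the a priori distinct left- and right-kernel conditions to coincide, the remainder being the frame bookkeeping and the Euler-sequence description of $\xi$.
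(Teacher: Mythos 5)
Your proof is correct, and it is essentially the un-normalized form of the paper's own argument: both reduce to a pointwise verification at a general point in the parametrization of \autoref{sec:param-expa-maps}, both describe $\xi$ via the Euler sequence (kernel spanned by $q_0+\sum_{\nu}f^{\nu}_0q_{\nu}$, with $\xi(q_{\nu})$ the vectors $u_{\nu}$), and in both the crux is equality of mixed second partials applied to \ref{eq:f-mu-i-ze}. The difference is execution. The paper chooses the local parameters to be $z^e:=f^e_0$ --- legitimate precisely because your matrix $F$ is invertible --- which makes $F$ the identity and $g^{\mu}_{\nu}=f^{\mu}_{0,z^{\nu}}$, so each $P^{\mu}$ becomes the Hessian $(f^{\mu}_{0,z^{\nu},z^{e}})$ and is symmetric outright; then $\Psi_{\xo}$ and $d_{\xo}\gamma\circ\xi_{\xo}$ are represented by \emph{literally the same} matrix, and no separate kernel comparison is needed. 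You instead carry a general frame and extract the twisted symmetry relation by differentiating \ref{eq:f-mu-i-ze} and antisymmetrizing; that relation is exactly the invariant content of the paper's coordinate normalization, and your intertwining identity then matches the two kernel conditions under $\xi$. What your version buys is that the mechanism is visible at every general point rather than only at the center of normalized coordinates; what the paper's buys is brevity. One bookkeeping remark: with your stated convention $F=(f^{\nu}_{0,z^e})_{\nu,e}$ (rows indexed by $\nu$), the coefficient vector of $\xi(v)$ in the frame $\partial/\partial z^e$ is $\beta=F^{-1}d$, and the symmetric matrix is $F^{\mathsf{T}}P^{\mu}=(P^{\mu})^{\mathsf{T}}F$, rather than $\beta=(F^{\mathsf{T}})^{-1}d$ and $FP^{\mu}$ symmetric as you wrote; these two transposition slips are consistent with each other and cancel, so your final equivalence $(P^{\mu})^{\mathsf{T}}d=0$ for all $\mu$ if and only if $P^{\mu}\beta=0$ for all $\mu$ stands unchanged.
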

\begin{proof}
  It is sufficient to prove
  \begin{equation*}
    \ker \Psi \otimes k(\xo) = \ker (d\gamma(-1) \circ \xi) \otimes k(\xo)
  \end{equation*}
  for a general point $\xo \in X$.
  In the setting of \autoref{sec:param-expa-maps},
  changing coordinates on $\PN$,
  we can assume $\xo = (1: 0 : \dots: 0)$ and $\gamma(\xo) = (Z^{\dim (X)+1} = \dots = Z^{N} = 0)$.
  Then, by taking $z^1, \dots, z^{\dim(X)}$ as $f_0^1, \dots, f_0^{\dim(X)}$,
  the original embedding $X \hookrightarrow \PN$ can be locally parametrized by
  $(1: z^1: \dots: z^{\dim(X)}: f_0^{\dim(X)+1}: \dots: f_0^{N})$.
  We find
  \[
  \phi_x(\zeta^1) = dz^1 \otimes \eta^0,\,
  \phi_x(\zeta^2) = dz^2 \otimes \eta^0,\,
  \cdots,\, \phi_x(\zeta^{\dim(X)}) = dz^{\dim(X)} \otimes \eta^0
  \]
  in \ref{eq:repre-phi_x-zeta}.
  Hence we have $g_{\nu}^{\mu} = f_{0, z^{\nu}}^{\mu}$ in \ref{eq:f-mu-i-ze}.
  As in \ref{eq:expanding-map-repre-dgamma},
  the linear map
  $d_{\xo}\gamma: T_{\xo}X \rightarrow T_{\gamma(\xo)}\Gr(\dim(X),\PN)$ is represented by
  \begin{align*}
    \frac{\partial}{\partial z^{e}}
    &\mapsto
    \sum_{\substack{1 \leq \nu \leq \dim(X)\\ \dim(X)+1 \leq \mu \leq N}}
    f_{0, z^{\nu}, z^{e}}^{\mu}(\xo) \cdot s_{\mu} \otimes q^{\nu}
    &    (\RNe).
  \end{align*}
  It follows from \ref{eq:expanding-map-repre-phi-x} that
  $\Psi_{\xo}: \Qp\spcheck \rightarrow \Hom(T_xX, \Sp\spcheck)$ is represented by
  \[
  \Psi_{\xo}(q_0) = 0,\;
  \Psi_{\xo}(q_{\nu}) = \sum_{\substack{1 \leq e \leq \dim(X)\\ \dim(X)+1 \leq \mu \leq N}}
  f_{0, z^{\nu}, z^{e}}^{\mu}(\xo) \cdot dz^e \otimes s_{\mu}
  \quad (1 \leq \nu \leq \dim(X)).
  \]
  Since $\xi_{\xo} : \Qp\spcheck \rightarrow T_{\xo}X$ is obtained by
  $\xi_{\xo}(q_0) = 0$ and $\xi_{\xo}(q_{\nu}) = \partial / \partial z^{\nu}$ with $1 \leq \nu \leq \dim(X)$,
  the linear maps $d_{\xo}\gamma \circ \xi_{\xo}$ and $\Psi_{\xo}$ can be identified;
  in particular, their kernels coincide.
\end{proof}

\begin{thm}\label{thm:mainthm-2a}
  The implication \textnormal{(a) $ \Rightarrow $ (b)}
  of \autoref{thm:mainthm-2} holds.
\end{thm}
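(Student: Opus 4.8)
The plan is to prove the two sides of (b) equal as subvarieties of $\GM \times \PN$ by first obtaining one inclusion from the duality of \autoref{sec:comp-expand-maps} and then matching dimensions. Throughout I replace $X$ by its smooth locus, which affects neither $\Gamma(X)$ nor $\Y$, so that the Gauss map is the expanding map with $m = 0$. Since $m = 0$, the universal family $U_X \subset X \times \PN$ is the diagonal and $U_X \cong X$, and the rational map $\tilde\gamma \colon U_X \dashrightarrow \Y \times \PN$ of \autoref{thm:im-tgamma-sigma-U0} sends $x$ to $(\gamma(x), x) = (\TT_x X, x)$; thus $\Gamma(X) = \overline{\tilde\gamma(U_X)}$. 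As $\sigma$ factors through $\X_0$ and $U_{\X_0} = U_{\Gmz}|_{\X_0}$, we have $\sigma^*U_{\X_0} = \sigma^*U_{\Gmz}$, so the separability of $\gamma$ together with \autoref{thm:im-tgamma-sigma-U0} yields the inclusion $\Gamma(X) \subseteq \sigma^*U_{\X_0}$.

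It remains to see that the two have the same dimension. The projection $\Gamma(X) \to X$ to $\PN$ is birational, whence $\dim \Gamma(X) = M$; and the projection $\sigma^*U_{\X_0} \to \Y$ has general fibre the $m_0$-plane $\sigma(y) \subset \PN$, so $\sigma^*U_{\X_0}$ is irreducible (a $\PP^{m_0}$-bundle over an open subset of $\Y$) of dimension $\dim \Y + m_0$. Hence the desired equality $\Gamma(X) = \sigma^*U_{\X_0}$ reduces to the numerical identity $m_0 = M - \dim \Y$.

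This identity is the heart of the argument and comes from comparing two computations of $\rk \ker \Psi$. On one hand, separability of $\gamma$ gives $\ker \gamma^*\Phi = \ker \Psi$ by \autoref{thm:kernel-rem}; since $\ker \Phi$ is $\sigma^*\sQ_{\Gmz}\spcheck$ of rank $m_0 + 1$ and pullback along the dominant map $\gamma$ preserves generic rank, we get $\rk \ker \Psi = m_0 + 1$. On the other hand, \autoref{thm:two-kernels} identifies $\ker \Psi$ with $\ker(d\gamma(-1) \circ \xi)$. Here $\xi$ is generically surjective onto $T_X(-1)$ with kernel $\sO_X(-1)$ of rank $1$, while separability forces $d\gamma$ to have generic rank $\dim \Y$, so $\ker d\gamma$ has rank $M - \dim \Y$. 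Therefore $\ker(d\gamma(-1) \circ \xi) = \xi^{-1}(\ker d\gamma(-1))$ has rank $1 + (M - \dim \Y)$, and comparing with the first computation gives $m_0 + 1 = 1 + M - \dim \Y$, i.e. $m_0 = M - \dim \Y$.

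With the dimensions matched, $\Gamma(X)$ is an irreducible closed subvariety of the irreducible variety $\sigma^*U_{\X_0}$ of the same dimension $M = \dim \Y + m_0$, so the two coincide, establishing (b). I expect the main obstacle to be exactly the rank bookkeeping of the third paragraph: translating the shrinking integer $m_0$ of the Gauss image (read off from $\ker \Phi$) into the fibre dimension $M - \dim \Y$ of the Gauss map (read off through $\xi$ and $d\gamma$), which is precisely what the paired statements \autoref{thm:kernel-rem} and \autoref{thm:two-kernels} are designed to supply.
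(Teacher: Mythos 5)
Your proof is correct and follows essentially the same route as the paper: the inclusion $\Gamma(X) \subseteq \sigma^*U_{\X_0}$ from \autoref{thm:im-tgamma-sigma-U0}, the identity $m_0 = M - \dim\Y$ obtained by comparing $\ker\gamma^*\Phi$ (via \autoref{thm:kernel-rem}) with $\ker(d\gamma(-1)\circ\xi)$ (via \autoref{thm:two-kernels}), and the concluding dimension count. Your write-up merely makes explicit some steps the paper leaves terse, such as the irreducibility of $\sigma^*U_{\X_0}$ and the identification $\sigma^*U_{\X_0} = \sigma^*U_{\Gmz}$.
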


\begin{proof}
  Let $X \subset \PN$ be a projective variety,
  let $\Y \subset \Gr(\dim(X),\PN)$ be the closure of the image of $X$ under $\gamma$,
  and let $\Y \dashrightarrow \Gmz$ be the shrinking map with $m_0 := \dim(X)^{-}$.
  We apply the previous argument to the quasi-projective variety $X^{sm}$.

  Assume that $\gamma$ is separable. 
  Then $\ker \gamma^*\Phi|_{X\spcirc} = \ker \Psi|_{X\spcirc}$ as in \autoref{thm:kernel-rem}.
  Therefore \autoref{thm:two-kernels} implies
  \[
  \ker \gamma^*\Phi |_{X\spcirc} = \ker (d\gamma(-1) \circ \xi) |_{X\spcirc},
  \]
  where the right hand side is of rank $\dim(X) - \dim (\Y) + 1$ because of the separability of $\gamma$.
  This implies $m_0 = \dim(X) - \dim (\Y)$.

  On the other hand, it follows from \autoref{thm:im-tgamma-sigma-U0} that
  $\Gamma(X) \subset \sigma^*U_{\X_0}$.
  In fact, $\Gamma(X)$ and $\sigma^*U_{\X_0}$ coincide, since both have the same dimension, $\dim(X)$.
\end{proof}

Now, we give the proof of \autoref{mainthm}; more precisely, we have:
\begin{cor}\label{cor-mainthm}
  Assume that the Gauss map $\gamma: X \dashrightarrow \Y$ is separable,
  and let $y \in \Y$ be a general point.
  Then the $m_0$-plane $\sigma(y) \subset \PN$ is contained in $X$.
  Moreover, the fiber $\gamma^{-1}(y) \subset X^{sm}$ coincides with $\sigma(y) \cap X^{sm}$,
  whose closure is equal to $\sigma(y)$.
\end{cor}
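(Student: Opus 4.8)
The corollary refines \autoref{thm:mainthm-2a} by giving the geometric meaning of the shrinking map: for a separable Gauss map, the plane $\sigma(y)$ is exactly (the closure of) the fiber of $\gamma$ over $y$, and it lies inside $X$. The plan is to extract this from the equality $\Gamma(X) = \sigma^*U_{\X_0}$ established in (a) $\Rightarrow$ (b), together with \autoref{thm:QR-subset-psi:cor} and \autoref{thm:im-tgamma-sigma-U0}, which already contain the relevant fiber-wise statements. I want to argue on general points and read off the three assertions: (1) $\sigma(y) \subset X$; (2) $\gamma^{-1}(y) = \sigma(y) \cap X^{sm}$; and (3) the closure of this fiber is all of $\sigma(y)$.

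First I would fix a general point $y \in \Y$ and consider the fiber of $\sigma^*U_{\X_0} \to \Y$ over $y$. By \autoref{thm:mainthm-2-cor}, the point $\sigma(y) \in \X_0$ corresponds to the closure of the fiber $\gamma^{-1}(y)$, and under the projection $\sigma^*\pi_0$ the fiber over $y$ maps to the $m_0$-plane $\sigma(y) \subset \PN$. Since (a) holds, \autoref{thm:mainthm-2a} gives $\Gamma(X) = \sigma^*U_{\X_0}$, so the fiber of $\Gamma(X) \to \Y$ over $y$ equals the fiber of $\sigma^*U_{\X_0} \to \Y$ over $y$. The fiber of $\Gamma(X) \to \Y$ over $y$ is, by definition of $\Gamma(X)$, precisely the set of contact points $x \in X^{sm}$ with $\gamma(x) = y$, i.e.\ the fiber $\gamma^{-1}(y)$; and its closure under $\sigma^*\pi_0$ is the $m_0$-plane $\sigma(y)$. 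This identification simultaneously shows that $\gamma^{-1}(y)$ is an open dense subset of $\sigma(y)$ and that its closure is $\sigma(y)$, giving assertions (2) and (3). The containment $\gamma^{-1}(y) \subset \sigma(y) \cap X^{sm}$ is immediate since contact points lie in $X^{sm}$; the reverse containment follows because both sides are open dense in $\sigma(y)$ (a general point of $\sigma(y)$ is a general contact point).

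For the linearity and the inclusion $\sigma(y) \subset X$ in (1), I would invoke \autoref{thm:im-tgamma-sigma-U0}, which gives the chain $x \subset \sigma\circ\gamma(x) \subset \gamma(x)$ of linear subvarieties for general $x$; taking $y = \gamma(x)$, the middle term is $\sigma(y)$, a genuine $m_0$-plane. Since $\sigma(y)$ is the closure of $\gamma^{-1}(y)$ and every point of $\gamma^{-1}(y)$ lies in $X$, the $m_0$-plane $\sigma(y)$ is contained in $\overline{X} = X$; this yields the containment $\sigma(y) \subset X$. The dimension count $m_0 = \dim(X) - \dim(\Y)$ from \autoref{thm:mainthm-2a} confirms that $\sigma(y)$ has the expected dimension of a general fiber, so $\sigma(y)$ is not merely contained in but is the full fiber closure.

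\textbf{Main obstacle.} The delicate point is the genericity bookkeeping: the equality $\Gamma(X) = \sigma^*U_{\X_0}$ holds as varieties, but to conclude the fiberwise equality $\gamma^{-1}(y) = \sigma(y) \cap X^{sm}$ for a \emph{general} $y$ I must ensure the two projections to $\Y$ agree generically over $\Y$ and that the fiber of $\Gamma(X) \to \Y$ over a general $y$ is genuinely the (reduced) fiber $\gamma^{-1}(y)$ of the Gauss map rather than a larger contact locus. Separability of $\gamma$ is exactly what guarantees that a general fiber is reduced and that $\dim \gamma^{-1}(y) = \dim(X) - \dim(\Y) = m_0$, matching the $m_0$-plane $\sigma(y)$; I would therefore need to state carefully that the open subset $X^\circ$ produced by \autoref{thm:QR-subset-psi:cor} and \autoref{thm:im-tgamma-sigma-U0} meets the general fiber in a dense open set, so that the closure of $\gamma^{-1}(y)$ really fills out $\sigma(y)$. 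This is the only place where one must combine separability, the dimension equality, and the density of $X^\circ$ rather than simply quoting the variety-level equality.
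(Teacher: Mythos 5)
Your route is the same as the paper's: invoke \autoref{thm:mainthm-2a} to get $\Gamma(X) = \sigma^*U_{\X_0}$ and compare fibers over a general $y \in \Y$. However, two steps as written do not hold up. First, your opening appeal to \autoref{thm:mainthm-2-cor} is circular: in the paper that corollary is proved \emph{after} \autoref{cor-mainthm}, its proof quotes \autoref{cor-mainthm} for exactly the statement you borrow (that $\sigma(y)$ corresponds to the closure of the fiber $\gamma_X^{-1}(y)$), and it moreover rests on the full equivalence of \autoref{thm:mainthm-2}, which is only completed later. Fortunately the citation is removable: the only fact you need at that point --- that the fiber $F_y$ of $\sigma^*U_{\X_0} \rightarrow \Y$ maps onto the $m_0$-plane $\sigma(y)$ under $\sigma^*\pi_0$ --- holds by construction of the pulled-back universal family, with no appeal to \autoref{thm:mainthm-2-cor}.

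Second, and more substantively, your justification of the reverse inclusion $\sigma(y) \cap X^{sm} \subset \gamma^{-1}(y)$, namely that ``both sides are open dense in $\sigma(y)$,'' is not valid reasoning: two dense open subsets of an irreducible variety need not coincide, so this kind of argument can only give equality away from a lower-dimensional subset, whereas the corollary asserts exact set-theoretic equality (and the open-denseness of $\gamma^{-1}(y)$ in $\sigma(y)$ is itself part of what is being proved). The missing ingredient, which is what the paper's commutative diagram encodes, is that $\gamma$ restricted to $X^{sm}$ is a \emph{morphism}, so its graph is closed in $\GM \times X^{sm}$; hence $\Gamma(X) \cap (\GM \times X^{sm})$ is precisely the graph, and any point $(y, x') \in \Gamma(X) = \sigma^*U_{\X_0}$ with $x' \in X^{sm}$ must satisfy $\gamma(x') = y$. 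Applying this to $F_y = \{y\} \times \sigma(y)$ gives $\sigma(y) \cap X^{sm} \subset \gamma^{-1}(y)$ on the nose; equivalently, $\gamma^{-1}(y)$ is closed in $X^{sm}$ (a fiber of a morphism), so $\overline{\gamma^{-1}(y)} \cap X^{sm} = \gamma^{-1}(y)$. With that fix, together with the genericity bookkeeping you correctly flag at the end (no component of the fiber of $\Gamma(X) \rightarrow \Y$ over a general $y$ lies in the boundary of the graph), your argument becomes the paper's proof.
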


This implies that the Gauss map is separable
if and only if its general fiber is scheme-theoretically
(an open subscheme of) a linear subvariety of $\PN$.
The latter condition immediately implies the former one,
since a fiber is reduced if it is scheme-theoretically
a linear subvariety.

\begin{proof}[Proof of \autoref{cor-mainthm}]
  Let $y \in \Y$ be a general point,
  and denote by $F_y$ the fiber of $\sigma^*U_{\X_0} \rightarrow \Y$ at $y$.
  We recall that
  the image $\sigma^*\pi_0(F_y)$ is equal to the $m_0$-plane $\sigma(y) \subset \PN$.
  If $\gamma$ is separable, then \autoref{thm:mainthm-2a}
  implies $\Gamma(X) = \sigma^*U_{\X_0}$, and then the following commutative diagram holds:
  \[
  \xymatrix{    \Gamma(X) \ar[d] \ar@{=}[r] & \sigma^*U_{\X_0} \ar[d] \ar[dl]_{\sigma^*\pi_0}
    \\
    X \ar@{-->}[r]_{\gamma} & \Y \makebox[0pt]{.}
  }  \]
  In particular, the $m_0$-plane $\sigma(y) = \sigma^*\pi_0(F_y)$
  is contained in $X$. On the other hand,
  it follows that $(\sigma^*\pi_0)^{-1}(\gamma^{-1}(y)) = F_y \cap (\sigma^*\pi_0)^{-1}(X^{sm})$.
  Since $\sigma^*\pi_0$ is surjective,
  we have $\gamma^{-1}(y) = \sigma(y) \cap X^{sm}$,
  where the right hand side is an open dense subset of $\sigma(y) \subset X$.
\end{proof}

Combining \cite[I, 2.8. Corollary]{Zak} and \autoref{cor-mainthm},
we have:

\begin{cor}\label{thm:cor-Gamma-birat}
  If the projective variety $X \subset \PN$ is smooth (and is non-linear),
  then the separable Gauss map $\gamma$ is in fact birational onto its image.
\end{cor}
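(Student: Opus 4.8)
The plan is to combine the finiteness statement of Zak \cite[I, 2.8. Corollary]{Zak} with the explicit fiber description supplied by \autoref{cor-mainthm}. Since $X$ is smooth and non-linear, Zak's corollary guarantees that the Gauss map $\gamma$ is finite; in particular a general fiber $\gamma^{-1}(y)$ is zero-dimensional. On the other hand, $\gamma$ is assumed separable, so \autoref{cor-mainthm} applies and tells us that for a general point $y \in \Y$ the fiber $\gamma^{-1}(y)$ equals $\sigma(y) \cap X^{sm}$, whose closure is precisely the $m_0$-plane $\sigma(y) \subset \PN$. The entire argument then rests on reconciling these two descriptions of the same general fiber.

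First I would record that the closure of the general fiber is a linear subvariety of dimension $m_0$. Confronting this with Zak's finiteness of $\gamma$ forces $m_0 = 0$: an $m_0$-plane is zero-dimensional exactly when $m_0 = 0$. Consequently $\sigma(y)$ is a single point of $\PN$, and the general fiber $\gamma^{-1}(y)$ consists of that one point. (Here I am using that a general fiber of $\gamma$ is both zero-dimensional and has linear closure of dimension $m_0$, so the two dimensions must agree.)

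Next I would upgrade ``one point set-theoretically'' to ``birational onto its image.'' Because $\gamma$ is separable, the field extension $K(X)/K(\gamma(X))$ is separably generated, so its inseparable degree is $1$; equivalently, by the characterization recalled before \autoref{thm:kernel-rem}, a general fiber of $\gamma$ is reduced. A reduced zero-dimensional fiber consisting of a single point has length one, so the separable degree of $\gamma$ onto its image is also $1$. Therefore $\deg \gamma = 1$, and $\gamma$ is birational onto $\Y$, which is exactly the assertion of \autoref{thm:cor-Gamma-birat}.

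There is no serious obstacle here, since both inputs are already in hand; the only point requiring a little care is the passage from a single-point general fiber to genuine birationality, where one must invoke separability to rule out a purely inseparable contribution to the degree. The geometric content, as noted in the surrounding text, is that a general embedded tangent space of a smooth $X$ with separable Gauss map is tangent to $X$ at exactly one point.
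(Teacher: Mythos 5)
Your proposal is correct and takes essentially the same route as the paper, whose entire proof is the one-line instruction to combine Zak's finiteness theorem \cite[I, 2.8. Corollary]{Zak} with \autoref{cor-mainthm}. The details you supply --- finiteness forcing $m_0 = 0$ so the general fiber is a single point, and separability (equivalently, reducedness of the general fiber) ruling out any inseparable contribution to the degree --- are exactly the steps the paper leaves implicit, and you have filled them in correctly.
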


\begin{rem}\label{thm:ch-p-non-lin}
  In positive characteristic, the Gauss map $\gamma$ can be \emph{inseparable}
  (Wallace \cite[\textsection 7]{Wallace}),
  and then,
  in contrast to the characteristic zero case,
  a general fiber $F$ of $\gamma$ can be \emph{non-linear}.
  Several authors gave examples where $F$ of an inseparable $\gamma$
  is not a linear subvariety
  (Kaji \cite[Example 4.1]{Kaji1986} \cite{Kaji1989}, Rathmann \cite[Example~2.13]{Rathmann},
  Noma \cite{Noma2001},
  Fukasawa \cite{Fukasawa2005} \cite{Fukasawa2006}).
\end{rem}

\begin{rem}\label{thm:dimX-1-gamma-bir}
  If $\dim X = 1$ and the Gauss map $\gamma$ is separable, then $\gamma$ is birational.
  This fact was classically known 
  for plane curves in terms of \textit{dual curves}
  (for example, see 
  \cite[p. 310]{Kleiman1976}, \cite[\textsection 9.4]{Iitaka}),
  and was shown for any curve by Kaji \cite[Corollary 2.2]{Kaji1989}.
\end{rem}

\subsection{Generalized conormal morphisms}
\label{sec:imag-separ-gauss-1}

We denote by
$\gamma$ the expanding map from
a closed subvariety $\X \subset \Gm$ to $\Gmp$,
and by $\Y \subset \Gmp$ the closure of the image of $\X$ under $\gamma$.
We consider the generalized conormal morphism
$\gamma^*\bpi: \gamma^*V_{\Y} \rightarrow \Pv$
given in \autoref{thm:defn-conormal}.
Let $Y$ be the image of $\gamma^*V_{\Y}$ under $\gamma^*\bpi$,
which is
a subvariety of $\Gr(N-1, \PN) = \Pv$.

We denote by $\sigma_Y = \sigma_{Y/\Gr(N-1, \PN)}$ the shrinking map from $Y$ to $\Gr(N-1-\dim Y, \PN)$.
As in Remarks \ref{thm:expanding-map-Gauss-map} and \ref{thm:dual-exp-shr-def},
the map $\sigma_Y$
is identified with
the Gauss map $Y \dashrightarrow \Gr(\dim Y, \Pv)$ which sends $y \in Y$ to $\TT_yY \subset \Pv$.
Denoting by $A^* \subset \Pv$ the set of hyperplanes containing a linear subvariety $A \subset \PN$,
we have $\sigma_Y(y)^* = \TT_yY$.
\begin{thm}\label{thm:c-to-a-dual}
  Let $\X$ be as above.
  Assume that
  $\gamma^*\bpi$ is separable and
  its image $Y$ is of dimension $N-m-1$,
  and assume that $\gamma$ is separable.
  Then the shrinking map
  $\sigma_{Y}$ of $Y$
  is separable and
  the closure of its image is equal to $\X$.
\end{thm}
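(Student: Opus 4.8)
The plan is to carry the whole statement into the dual projective space $\Pv$, where, by the discussion preceding the theorem, $\sigma_Y$ becomes the Gauss map of $Y \subset \Pv$, with $\sigma_Y(y)^* = \TT_y Y$ for general $y$. Conceptually, this theorem is the dual mirror of \autoref{thm:mainthm-2a}. Via \autoref{thm:dual-exp-shr-def}, the subvariety $\X \subset \Gm$ corresponds to $\bar\X \subset \Gr(N-m-1,\Pv)$, the separable expanding map $\gamma$ corresponds to a separable shrinking map of $\bar\X$, and — this is the key identification — the generalized conormal morphism $\gamma^*\bpi\colon \gamma^*V_\Y \to \Pv$ is identified with the projection to $\Pv$ of the universal family pulled back along that shrinking map. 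In particular $Y$, the image of $\gamma^*\bpi$, is swept out by the $(N-\mpl-1)$-planes $\gamma(x)^* \subset \Pv$ as $x$ ranges over $\X$, and the hypothesis $\dim Y = N-m-1$ fixes the dimension of the resulting swept-out subvariety.

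First I would pin down the image of $\sigma_Y$. Fix a general $y \in Y$ lying over a general $x \in \X$, so that $y \in \gamma(x)^*$, i.e. $y$ is a hyperplane containing the $\mpl$-plane $\gamma(x)$. Since $\gamma^*\bpi(\ell_x) = \gamma(x)^*$ and $\gamma(x)^*$ is a linear subspace contained in $Y$, the fibre directions of $\gamma^*\bpi$ already give $\gamma(x)^* \subseteq \TT_y Y$ at a smooth point; dualizing, $\sigma_Y(y) \subseteq \gamma(x)$. To sharpen this inclusion to $\sigma_Y(y) = x$, I would differentiate the hyperplane parametrization \ref{eq:hyp-pl-defpl-2} of \autoref{sec:param-expa-maps} in the $z^e$-directions and read off the remaining tangent directions of $Y$ at $y$; combined with the chain $x \subseteq \sigma\circ\gamma(x) \subseteq \gamma(x)$ of \autoref{thm:im-tgamma-sigma-U0}, this identifies the intersection of all hyperplanes in $\TT_y Y$ with the original $m$-plane $x$. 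Since every $x \in \X$ contributes its plane $\gamma(x)^*$ to $Y$ and $\gamma^*\bpi$ dominates $Y$, letting $x$ vary shows that the closure of the image of $\sigma_Y$ is precisely $\X$.

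For separability of $\sigma_Y$, I would compute the rank of $d\sigma_Y$ at a general point using the matrix description of \autoref{thm:rk-d-gamma-and-psi}, now read in the dual. The two separability hypotheses enter here: separability of $\gamma$ yields $\ker\gamma^*\Phi = \ker\Psi$ as in \autoref{thm:kernel-rem}, which controls the fibre structure of $\gamma^*\bpi$, while separability of $\gamma^*\bpi$ guarantees that the generic fibre of $\gamma^*V_\Y \to Y$ is reduced, so that $\TT_y Y$ has the full dimension $N-m-1$. Feeding these into the rank formula shows that $d\sigma_Y$ is surjective onto $T_{\sigma_Y(y)}\Gm$ at a general $y$, which is the required separability. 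This step runs exactly parallel to the kernel computations in the proof of \autoref{thm:mainthm-2a}, with the roles of expanding and shrinking maps interchanged.

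The main obstacle is the simultaneous bookkeeping in the middle step: one must use both separability hypotheses together with the exact value $\dim Y = N-m-1$ to force, on the one hand, that the extra $z^e$-tangent directions shrink $\sigma_Y(y)$ from $\gamma(x)$ down to exactly $x$ and not to some intermediate plane, and, on the other hand, that no tangent directions are lost to the possibly positive-dimensional fibres of $\gamma^*\bpi$. Once the tangent-space computation is organized through the parametrization of \autoref{sec:param-expa-maps} and the composition inclusions of \autoref{thm:im-tgamma-sigma-U0}, both the image identification and the separability should fall out in one stroke.
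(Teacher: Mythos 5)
Your skeleton (work in $\Pv$, identify $\sigma_Y$ with the Gauss map of $Y$, note that the linear spaces $\gamma(x)^* \subset Y$ force $\gamma(x)^* \subseteq \TT_yY$, hence $\sigma_Y(y) \subseteq \gamma(x)$) starts along the paper's lines, but the step that carries all the content --- sharpening to $\sigma_Y(y) = x$ --- is not actually proved, and the ingredients you name for it are the wrong ones. What makes this step work in the paper (\autoref{thm:rk-ineq}\ref{item:rk-ineq:b}) is a specific vanishing: at a general point $v_o$ lying over $\xo$ one has $f_i^j(\xo) = 0$, so in the description \ref{eq:para-dgammapi-0} every vector in $\im(d_{v_o}\gamma^*\bpi)$ has zero component along $\partial/\partial \bar Z_0, \dots, \partial/\partial \bar Z_m$, i.e.\ the image of the differential lies in the tangent space of $\xo^*$. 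Separability of $\gamma^*\bpi$ is then used to conclude that $T_yY$ equals that image, whence $\TT_yY \subseteq \xo^*$, and the hypothesis $\dim Y = N-m-1$ upgrades this inclusion to the equality $\TT_yY = \xo^*$, i.e.\ $\sigma_Y(y) = \xo$. Your proposal never isolates this vanishing, and it assigns the wrong role to separability of $\gamma^*\bpi$: you say it guarantees that ``$\TT_yY$ has the full dimension $N-m-1$,'' but that is automatic at a smooth point of an $(N-m-1)$-dimensional variety; what separability actually buys is that $\TT_yY$ is no \emph{bigger} than the span of the image of the differential. Moreover, the tool you invoke to pin down $x$ --- the chain $x \subseteq \sigma\circ\gamma(x) \subseteq \gamma(x)$ of \autoref{thm:im-tgamma-sigma-U0} --- concerns the shrinking map $\sigma = \sigma_{\Y/\Gmp}$ of $\Y$ inside the Grassmannian, not $\sigma_Y = \sigma_{Y/\Pv}$; it says nothing about $\TT_yY$, so it cannot close this step. (The statement about $\sigma\circ\gamma$ that the paper does use is \autoref{thm:sigma-gamma-id}, and it enters only to complete the commutative diagram, not to identify $\sigma_Y(y)$.)

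The second gap is the separability of $\sigma_Y$. You propose a direct rank computation of $d\sigma_Y$ via \autoref{thm:rk-d-gamma-and-psi} ``read in the dual,'' but that lemma presupposes a local parametrization of the variety whose expanding/shrinking map is being differentiated, and you have no such parametrization of $Y$ --- producing one is precisely the difficulty. The paper avoids this entirely: once $\sigma_Y(\gamma^*\bpi(v)) = x$ is established for general $v$, the composite $\sigma_Y \circ \gamma^*\bpi$ coincides with the projection $\gamma^*V_{\Y} \to \X$, which is separable and dominant; since a subextension of a separably generated function-field extension is again separably generated, both the separability of $\sigma_Y$ and the identification of the closure of its image with $\X$ follow at once, with no computation of $d\sigma_Y$. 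Finally, a framing correction: this theorem is not the ``dual mirror'' of \autoref{thm:mainthm-2a} (which is (a) $\Rightarrow$ (b)); by \autoref{thm:c-to-a-dual:rem} it is the dual of the implication (c') $\Rightarrow$ (a), so the kernel computations of \autoref{thm:two-kernels} used for \autoref{thm:mainthm-2a} are not the relevant parallel here.
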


\begin{rem}\label{thm:c-to-a-dual:rem}
  By considering the dual of the above statement,
  it follows that \autoref{thm:c-to-a-dual} is equivalent to
  ``(c') $\Rightarrow$ (a)'' of \autoref{thm:mainthm-2}.
  This is because, in the setting of \autoref{thm:mainthm-2}, we have
  $\sigma_{\Y/\GM} = \gamma_{\bar \Y/ \Gr(N-M-1, \Pv)}$,
  $\gamma_{X/\PN} = \sigma_{\bar X/ \Gr(N-1, \Pv)}$, and so on (see \autoref{thm:dual-exp-shr-def}).
\end{rem}

The following result is essential for the proof of \autoref{thm:c-to-a-dual},
and is indeed a generalization of the Monge-Segre-Wallace criterion.
Here we recall that
$\Psi_x: \Qp\spcheck \rightarrow \Hom(T_x\X, \Sp\spcheck)$ is the linear map given in
\ref{eq:Psi-factor}, \ref{eq:expanding-map-repre-phi-x} in \autoref{sec:comp-expand-maps}.

\begin{prop}\label{thm:rk-ineq}
  Let $\X \subset \Gm$, let $v \in \gamma^*V_{\Y}$ be a general point,
  and let  $x \in \X$ be the image of $v$ under $\gamma^*V_{\Y} \rightarrow \X$.
  Then the following holds:
  \begin{enumerate}
  \item\label{item:rk-ineq:a}
    $\rk d_{v}\gamma^*\bpi - (N-\mpl-1) \leq \rk d_x\gamma$
    and
    $\rk d_{v}\gamma^*\bpi - (N-\mpl-1) \leq \rk \Psi_x$.
  \item\label{item:rk-ineq:rk0}

    If $\rk d_{v}\gamma^*\bpi = N-\mpl-1$, then $\rk d_x \gamma = 0$.

  \item\label{item:rk-ineq:b}
    If $\gamma^*\bpi$ is separable,
    then $\TT_{\gamma^*\bpi(v)} Y \subset x^*$ in $\Pv$.
  \end{enumerate}
\end{prop}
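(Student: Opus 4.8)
The plan is to compute the differential $d_{v}\gamma^*\bpi$ explicitly in the local coordinates of \autoref{sec:param-expa-maps} and then read off all three statements from one computation. Near the general point $v$, the fibre $\ell_x$ of $\gamma^*V_{\Y}\to\X$ through $v$ is cut out by the homogeneous coordinates $(s_{\mpl+1}:\dots:s_{N})$, and $\gamma^*\bpi$ is given by \ref{eq:hyp-pl-defpl-2}; I will write its coefficients, grouped by the blocks $Z^i$ (for $\RNi$), $Z^{\nu}$ (for $\RNnu$), $Z^{\mu}$ (for $\RNmu$), as $C_i,C_{\nu},C_{\mu}$. Differentiating in the fibre directions $\partial/\partial s_{\mu}$ and the base directions $\partial/\partial z^{e}$ is routine, and the one essential simplification is that, after substituting \ref{eq:f-mu-i-ze}, the base derivative of $C_i$ collapses to $-\sum_{\RNnu}D^{e}_{\nu}f^{\nu}_i(x)$, where I set $D^{e}_{\nu}:=\sum_{\RNmu}s_{\mu}g^{\mu}_{\nu,z^{e}}(x)$, while $\partial_{z^{e}}C_{\nu}=D^{e}_{\nu}$ and $\partial_{z^{e}}C_{\mu}=0$.

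From this I extract the rank. The fibre directions map isomorphically onto the embedded tangent space of the linear variety $\gamma(x)^{*}\subset\Pv$ (the restriction $\gamma^*\bpi|_{\ell_x}$ is linear in $(s_{\mu})$ with image $\gamma(x)^{*}$, as recorded after \ref{eq:hyp-pl-defpl-2}), so they contribute exactly $N-\mpl-1$ to $\rk d_{v}\gamma^*\bpi$; and working modulo these, the vector produced by $\partial/\partial z^{e}$ reduces to $\bigl((-\sum_{\nu}D^{e}_{\nu}f^{\nu}_i)_i,(D^{e}_{\nu})_{\nu}\bigr)$, which is determined injectively by its $\nu$-block $(D^{e}_{\nu})_{\nu}$. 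Hence $\rk d_{v}\gamma^*\bpi=(N-\mpl-1)+\rk(D^{e}_{\nu})_{e,\nu}$. For \ref{item:rk-ineq:a} I observe that each column of $(D^{e}_{\nu})_{e,\nu}$ is the $(s_{\mu})$-weighted combination over $\mu$ of the columns $g^{\mu}_{\nu,z^{e}}(x)$ in the matrix of \autoref{thm:rk-d-gamma-and-psi}\ref{thm:rk-d-gamma-and-psi:a}, so its column space lies in that of the $d_x\gamma$ matrix, giving $\rk(D^{e}_{\nu})\le\rk d_x\gamma$; the transposed observation against the matrix of \autoref{thm:rk-d-gamma-and-psi}\ref{thm:rk-d-gamma-and-psi:b} gives $\rk(D^{e}_{\nu})\le\rk\Psi_x$. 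For \ref{item:rk-ineq:rk0}, the hypothesis forces $\rk(D^{e}_{\nu})=0$, so $\sum_{\mu}s_{\mu}g^{\mu}_{\nu,z^{e}}(x)=0$ for all $\nu,e$; since $v$ general makes $(s_{\mu})$ general, every $g^{\mu}_{\nu,z^{e}}(x)$ vanishes, whence $\rk d_x\gamma=0$ by \autoref{thm:rk-d-gamma-and-psi}.

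For \ref{item:rk-ineq:b} I will use the description \ref{eq:hyp-pl-defpl} of $x^{*}$ as $\cP(W_x)$, where $W_x\subset H^{0}(\PN,\sO(1))$ is spanned by the vectors having $Z^i$-coefficient $f^{j}_i(x)$ and $Z^{j'}$-coefficient $-\delta_{jj'}$ (for $\RNj$). A direct check, choosing the weights on these spanning vectors dictated by the $Z^{\mu}$- and $Z^{\nu}$-blocks, shows that the coefficient vector of $\gamma^*\bpi(v)$ itself, and every tangent vector obtained above by differentiating in the $s_{\mu}$- and $z^{e}$-directions, all lie in $W_x$. Thus the point $\gamma^*\bpi(v)$ together with the affine cone over the image of $d_{v}\gamma^*\bpi$ lies inside $W_x$. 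Here separability of $\gamma^*\bpi$ is exactly what closes the argument: it guarantees that $\TT_{\gamma^*\bpi(v)}Y$ is spanned by $\gamma^*\bpi(v)$ and the image of the differential (rather than being strictly larger), so $\TT_{\gamma^*\bpi(v)}Y\subset\cP(W_x)=x^{*}$.

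The main obstacle will be the differential computation itself, and specifically recognizing that the cancellation forced by \ref{eq:f-mu-i-ze} is what makes both the base-direction vector and the tangent space of $Y$ land cleanly in the single linear space $x^{*}$; once the quantity $D^{e}_{\nu}$ is isolated, the three conclusions are essentially linear algebra against the matrices of \autoref{thm:rk-d-gamma-and-psi}. A secondary subtlety is the careful bookkeeping of the projectivization — quotienting out the radial direction along the fibre and the direction of $\gamma^*\bpi(v)$ — which must be done precisely in order to obtain the exact constant $N-\mpl-1$ rather than a mere inequality.
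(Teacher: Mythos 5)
Your proposal is correct and follows essentially the same route as the paper: you compute $d_v\gamma^*\bpi$ from \ref{eq:hyp-pl-defpl-2} using the cancellation forced by \ref{eq:f-mu-i-ze}, and your matrix $(D^e_\nu)_{e,\nu}$ with $D^e_\nu=\sum_\mu s_\mu g^\mu_{\nu,z^e}(x)$ is exactly the paper's matrix $G_{d_v\gamma^*\bpi}$, giving the same rank formula $\rk d_v\gamma^*\bpi = N-\mpl-1+\rk(D^e_\nu)$ and the same comparison against \autoref{thm:rk-d-gamma-and-psi} for parts \ref{item:rk-ineq:a} and \ref{item:rk-ineq:rk0}. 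The only (harmless) difference is presentational: for part \ref{item:rk-ineq:b} the paper adapts coordinates so that $f^j_i(\xo)=0$ and reads off $\TT_{v_o'}Y\subset(Z_0=\dots=Z_m=0)=\xo^*$ in an affine chart of $\Pv$, whereas you phrase the same containment invariantly via the subspace $W_x$ of coefficient vectors, with separability playing the identical role of forcing $\TT_{\gamma^*\bpi(v)}Y$ to be spanned by the point and the image of the differential.
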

To prove \autoref{thm:rk-ineq},
we will describe the linear map $d_{v_o}\gamma^*\bpi$ for 
a general point $v_o \in \gamma^*V_{\Y}$, as follows.
Under the setting of \autoref{sec:param-expa-maps},
the morphism $\gamma^*\bpi$ is expressed as
\ref{eq:hyp-pl-defpl-2},
where we have $V_{\Gomp} \simeq \Gomp \times \PP^{N-\mpl-1}$
as in \autoref{thm:desc-Go}\ref{thm:desc-Go:VGm}.
Let $v_o = (\xo, s_o) \in \gamma^*V_{\Y}$ with $\xo \in \X$ and $s_o \in \PP^{N-\mpl-1}$.
Changing homogeneous coordinates on $\PN$, we can assume that
\[
\xo = (Z^{m+1} = \dots = Z^{N} = 0) \subset v_o' := \gamma^*\bpi(v_o) = (Z^{N} = 0) \, \text{ in } \PN.
\]
We regard $(s_{\mpl+1}: \dots: s_{N})$ and $(Z_0: \dots: Z_{N})$ as
homogeneous coordinates on $\PP^{N-\mpl-1} = \cP(\Sp)$ and $\Pv$.
Then, since $v_o' = (Z^{N}=0)$, we have
\[
s_o = (s_{\mpl+1} = \dots = s_{N-1} = 0) \in \PP^{N-\mpl-1}.
\]
For affine coordinates
$\bar s_{\mu} := s_{\mu}/s_{N}$
on $\set{s_{N} \neq 0} \subset \PP^{N-\mpl-1}$,
we regard $z^1, \dots, z^{\dim (\X)}, \bar s_{\mpl+1}, \dots, \bar s_{N-1}$
as a system of regular parameters of
$\sO_{\gamma^*V_{\Y}, v_o}$.
In addition, we set $\bar s_{N} := 1$, and
take affine coordinates $\bar Z_{\alpha} := Z_{\alpha}/Z_N$ on $\set{Z_N \neq 0} \subset \Pv$.

Now, for a general point $v = (x,s) \in \gamma^*V_{\Y}$ near $v_o$,
which is expressed as $((f^j_i)_{i,j}, (\bar s_{\mpl+1}, \cdots, \bar s_{N-1}))$,
it follows from
\ref{eq:hyp-pl-defpl-2}
that
the linear map $d_v \gamma^*\bpi: T_{v} \gamma^*V_{\Y} \rightarrow T_{v'}\Pv$ is represented by
\begin{equation}\label{eq:para-dgammapi-0}
  \begin{aligned}
    \frac{\partial}{\partial z^{e}}
    &\mapsto
    \sum_{\RNi}
    \sum_{\substack{\RNnu\\\RNmu}}
    - \bar s_{\mu} g^{\mu}_{\nu,z^e} f^{\nu}_i \cdot \frac{\partial}{\partial \bar Z_i}
    + \sum_{\RNnu} \sum_{\RNmu} \bar s_{\mu} g^{\mu}_{\nu, z^e} \cdot \frac{\partial}{\partial \bar Z_{\nu}},
    \\
    \frac{\partial}{\partial \bar s}_{\bar\mu}
    &\mapsto
    \sum_{\substack{\RNi}} (f^{\bar\mu}_i +
    \sum_{\RNnu} - g^{\bar\mu}_{\nu} f^{\nu}_i) \cdot \frac{\partial}{\partial \bar Z_i}
    + \sum_{\substack{\RNnu}} g^{\bar\mu}_{\nu} \cdot \frac{\partial}{\partial \bar Z_{\nu}}
    - \frac{\partial}{\partial \bar Z_{\bar\mu}},
  \end{aligned}
\end{equation}
for $\RNe$ and $\mpl+1 \leq \bar\mu \leq N-1$.
Here the $N-\mpl-1$ elements
$d_{v}\gamma^*\bpi(\partial / \partial \bar s_{\mpl+1}), \dots,
d_{v}\gamma^*\bpi(\partial / \partial \bar s_{N-1})$
are linearly independent,
since each of them has
${\partial}/{\partial \bar Z_{\bar\mu}}$ as its tail term.
Moreover, setting a $\dim(\X) \times (\mpl - m)$ matrix
\[
G_{d_{v}\gamma^*\bpi}
:=
\begin{bmatrix}
  \sum_{\mu} \bar s_{\mu} g^{\mu}_{m+1, z^1}(x)& \cdots &
  \sum_{\mu} \bar s_{\mu} g^{\mu}_{\mpl, z^1}(x)
  \\
  \vdots && \vdots
  \\
  \sum_{\mu} \bar s_{\mu} g^{\mu}_{m+1, z^{\dim(\X)}}(x) & \cdots &
  \sum_{\mu} \bar s_{\mu} g^{\mu}_{\mpl, z^{\dim(\X)}}(x)
\end{bmatrix},
\]
we have
\begin{equation*}\label{eq:rk-gconormal}
  \rk d_{v} \gamma^*\bpi = N-\mpl-1 + \rk G_{d_{v}\gamma^*\bpi}.
\end{equation*}

\begin{proof}[Proof of \autoref{thm:rk-ineq}]
  \begin{inparaenum}
  \item 
    From \autoref{thm:rk-d-gamma-and-psi}\ref{thm:rk-d-gamma-and-psi:a},
    it follows that $\rk G_{d_{v_o} \gamma^*\bpi} \leq \rk d_{\xo}\gamma$.
    In addition, from \autoref{thm:rk-d-gamma-and-psi}\ref{thm:rk-d-gamma-and-psi:b},
    considering the transpose of the matrix,
    we have $\rk G_{d_{v_o} \gamma^*\bpi} \leq \rk \Psi_{\xo}$.

  \item
    Assume that $\rk d_{\xo}\gamma > 0$. Then it follows from
    \autoref{thm:rk-d-gamma-and-psi}\ref{thm:rk-d-gamma-and-psi:a}
    that $g^{\mu}_{\nu, z^e}(\xo) \neq 0$ for some $\mu, \nu, e$.
    Hence
    $G_{d_{v} \gamma^*\bpi} \neq 0$ for some $v$
    with $v \mapsto \xo$ under $\gamma^*V_{\Y} \rightarrow \X$.
    This implies that $\rk d_{v}\gamma^*\bpi > N-\mpl-1$.

  \item 
    Since $f_i^j(\xo) = 0$, the description~\ref{eq:para-dgammapi-0}
    implies that
    $\im(d_{v_o}\gamma^*\bpi)$
    is contained in the vector subspace of $T_{v_o'}\Pv$ spanned by
    $\partial /\partial \bar Z_{m+1}, \cdots, \partial /\partial \bar Z_{N-1}$.
    If $\gamma^*\bpi$ is separable,
    then  $\TT_{v_o'} Y \subset (Z_0 = \dots = Z_{m} = 0)$ in $\Pv$,
    where the right hand side is equal to $\xo^*$.
  \end{inparaenum}
\end{proof}

Recall that $\sigma: \Y \dashrightarrow \Gr(m_0, \PN)$ is the shrinking map with
$m_0 := (\mpl)^{-}$.

\begin{cor}\label{thm:sigma-gamma-id}
  Assume that
  $\gamma^*\bpi$ is separable and
  its image is of dimension $N-m-1$,
  and assume that $\gamma$ is separable.
  Then $m_0 = m$
  and $\sigma \circ \gamma|_{\Xo}$ is an identity map of a certain open subset $\Xo \subset \X$.
\end{cor}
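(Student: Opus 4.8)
The plan is to deduce the corollary directly from \autoref{thm:QR-subset-psi:cor}, which already delivers both conclusions, namely $m_0 = m$ and that $\sigma\circ\gamma|_{\Xo}$ is an identity map, once one knows that $\gamma$ is separable and that $\Psi_x$ has rank $\mpl - m$ for a general point $x \in \X$. Separability of $\gamma$ is part of our hypotheses, so the whole argument reduces to pinning down the rank of $\Psi_x$ at $\mpl - m$. I would obtain this as an equality of two matching bounds.

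First I would record the upper bound $\rk \Psi_x \le \mpl - m$, which needs no separability assumption. By \autoref{thm:QR-subset-psi} we have $\sQ_{\Xo}\spcheck \subset \ker \Psi|_{\Xo}$, and $\sQ_{\X}\spcheck$ is a subbundle of rank $m+1$ sitting inside the source bundle $\gamma^*\sQ_{\Y}\spcheck$ of $\Psi$, which has rank $\mpl + 1$; hence $\rk \Psi_x \le (\mpl+1) - (m+1) = \mpl - m$ at a general $x$. (The inclusion $\sQ_{\X}\spcheck \subset \gamma^*\sQ_{\Y}\spcheck$ of subspaces of $H^0(\PN,\sO(1))\spcheck$ reflects the inclusion $x \subset \gamma(x)$ of \autoref{thm:basic-inclusion}.)

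Next I would produce the matching lower bound from the hypotheses on $\gamma^*\bpi$ together with \autoref{thm:rk-ineq}\ref{item:rk-ineq:a}. Since $\gamma^*\bpi$ is separable, its differential is surjective onto the tangent space of the image $Y$ at a general point $v \in \gamma^*V_{\Y}$, so $\rk d_{v}\gamma^*\bpi = \dim Y = N - m - 1$ by the dimension hypothesis. The second inequality of \autoref{thm:rk-ineq}\ref{item:rk-ineq:a} then gives
\[
\rk \Psi_x \ge \rk d_{v}\gamma^*\bpi - (N-\mpl-1) = (N-m-1) - (N-\mpl-1) = \mpl - m.
\]
Combining the two bounds yields $\rk \Psi_x = \mpl - m$ for general $x$, and invoking \autoref{thm:QR-subset-psi:cor} finishes the proof.

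I do not expect a genuine obstacle here: all the substantive content is already isolated in \autoref{thm:rk-ineq} and \autoref{thm:QR-subset-psi}, and the step that requires care is merely the bookkeeping showing that the lower bound coming from separability of $\gamma^*\bpi$ and the upper bound coming from $\sQ_{\X}\spcheck \subset \ker\Psi$ coincide at exactly the value $\mpl - m$ demanded by \autoref{thm:QR-subset-psi:cor}. The one point to verify en route is that a general $v \in \gamma^*V_{\Y}$ lies over a general $x \in \X$ under $\gamma^*V_{\Y} \rightarrow \X$, so that the rank computations at $v$ and at $x$ may be used in tandem; this is immediate because the fibers of that projection are linear subvarieties.
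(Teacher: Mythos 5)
Your proposal is correct and follows essentially the same route as the paper: both obtain the lower bound $\mpl-m \leq \rk \Psi_x$ from \autoref{thm:rk-ineq}\ref{item:rk-ineq:a} combined with $\rk d_{v}\gamma^*\bpi = N-m-1$ at a general point $v$, and then conclude via \autoref{thm:QR-subset-psi:cor}. The only inessential difference is the source of the matching upper bound: the paper reads $\rk \Psi_x \leq \mpl-m$ off the $(\mpl-m)\times(N-\mpl)\cdot\dim(\X)$ matrix of \autoref{thm:rk-d-gamma-and-psi}\ref{thm:rk-d-gamma-and-psi:b}, whereas you deduce it from the inclusion $\sQ_{\Xo}\spcheck \subset \ker\Psi|_{\Xo}$ of \autoref{thm:QR-subset-psi}.
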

\begin{proof}
  Since $\rk d_{v} \gamma^*\bpi = N-m-1$, it follows from \autoref{thm:rk-ineq}\ref{item:rk-ineq:a}
  that $\mpl-m \leq \rk \Psi_x$ (the equality indeed holds,
  due to \autoref{thm:rk-d-gamma-and-psi}\ref{thm:rk-d-gamma-and-psi:b}).
  Then \autoref{thm:QR-subset-psi:cor} implies the result.
\end{proof}

\begin{proof}[Proof of \autoref{thm:c-to-a-dual}]
  \autoref{thm:sigma-gamma-id} implies that
  $m_0 = m$ and that $\sigma \circ \gamma|_{\Xo}$ is an identity map of $\Xo$.
  On the other hand, since $Y$ is of dimension $N-m-1$, in the statement of
  \autoref{thm:rk-ineq}\ref{item:rk-ineq:b}, we have
  $\TT_{\gamma^*\bpi(v)}Y = x^*$ in $\Pv$.
  Since
  $\sigma_{Y}$
  is identified with the Gauss map
  $Y \dashrightarrow \Gr(\dim Y, \Pv)$,
  it follows that $\sigma_Y(\gamma^*\bpi(v)) = x$ in $\PN$.
  Now the diagram

  \[
  \xymatrix{    \gamma^*V_{\Y} \ar@/^1pc/[rr]^{\gamma^*\bpi} \ar@{-->}[r] \ar[d] & V_{\Y} \ar[r]_{\bpi} \ar[d] & Y \ar@{-->}[d]^{\sigma_{Y}}
    \\
    \X \ar@{-->}[r]^{\gamma} \ar@/_1pc/[rr]_{\mathrm{id}} & \Y \ar@{-->}[r]^{\sigma} & \X
  }  \]
  is commutative. In particular $\sigma_{Y}$ is separable, since $\gamma^*V_{\Y} \rightarrow \X$ is.
\end{proof}

\begin{proof}[Proof of \autoref{thm:mainthm-2}]
  (b) $ \Rightarrow $ (c) $ \Rightarrow $ (c') follows from \autoref{thm:b-ac}.
  (a) $ \Rightarrow $ (b) follows from \autoref{thm:mainthm-2a}.
  (c') $ \Rightarrow $ (a) follows from \autoref{thm:c-to-a-dual} and \autoref{thm:c-to-a-dual:rem}.
\end{proof}
\begin{proof}[Proof of \autoref{thm:mainthm-2-cor}]
  $\gamma_{\X_0} \circ \sigma$ is identity map due to the dual statement of \autoref{thm:sigma-gamma-id}.
  As in \autoref{cor-mainthm}, for general $y$, $\sigma(y)$ corresponds
  to the closure of the fiber $\gamma_X^{-1}(y)$.
  Thus the diagram~\ref{eq:maximal-developable-ruling} is commutative.
\end{proof}

From the equivalence (c) $ \Leftrightarrow $ (a), we have:

\begin{cor}\label{thm:cor-sep-gauss-image}
  Let $\sigma$ be the shrinking map
  from a closed subvariety $\Y \subset \GM$ to $\GMm$.
  Then $\Y$ is the closure of a image of a separable Gauss map if and only if
  $\Mm = M-\dim \Y$ holds and
  $\sigma^*U_{\GMm} \rightarrow \PN$ is separable and generically finite onto its image.
\end{cor}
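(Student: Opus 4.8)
The plan is to read off this corollary from the equivalence \textnormal{(a) $\Leftrightarrow$ (c)} of \autoref{thm:mainthm-2}, supplemented by \autoref{thm:mainthm-2-cor}. The first point to settle is that the variety $\sigma^*U_{\GMm}$ occurring in the statement is literally the variety $\sigma^*U_{\X_0}$ of the main theorem. Indeed, writing $\X_0 \subset \GMm$ for the closure of the image of $\sigma$, the map $\sigma$ factors through $\X_0$, and by definition $U_{\X_0} = U_{\GMm}|_{\X_0}$; hence the closure in $\Y \times \PN$ of the pull-back of $U_{\GMm}$ along $\sigma$ agrees with that of $U_{\X_0}$. In particular the projection $\sigma^*U_{\GMm} \to \PN$ is exactly the morphism $\sigma^*\pi_0$, so the separability and finiteness conditions in the two formulations coincide.

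For the ``only if'' direction I would argue as follows. Suppose $\Y$ is the closure of the image of a separable Gauss map $\gamma_X$ of some $X \subset \PN$. Since the image lies in $\GM = \Gr(M,\PN)$, the variety $X$ is necessarily $M$-dimensional, so $X$ and $\Y$ satisfy the hypotheses of \autoref{thm:mainthm-2} and condition (a) holds. \autoref{thm:mainthm-2-cor} then yields $\Mm = m_0 = M - \dim\Y$, while the implication (a) $\Rightarrow$ (c) gives that $\sigma^*\pi_0 = \sigma^*U_{\GMm} \to \PN$ is separable and generically finite onto its image, as required.

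For the ``if'' direction, assume $\Mm = M - \dim\Y$ and that $\sigma^*U_{\GMm} \to \PN$ is separable and generically finite onto its image; let $X \subset \PN$ denote this image. The key verification is that $\dim X = M$, which is where the numerical hypothesis is used: a general fiber of $\sigma^*U_{\GMm} \to \Y$ is the $\Mm$-plane $\sigma(y) \subset \PN$, so $\dim\sigma^*U_{\GMm} = \dim\Y + \Mm = M$, and generic finiteness forces $\dim X = M$. Thus $X$ is an $M$-dimensional closed subvariety for which condition (c) of \autoref{thm:mainthm-2} is satisfied, and the implication (c) $\Rightarrow$ (a) shows that $\gamma_X$ is separable with image closure $\Y$; that is, $\Y$ is the closure of an image of a separable Gauss map.

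The essential work has already been done in \autoref{thm:mainthm-2} and \autoref{thm:mainthm-2-cor}, so I expect no serious obstacle here. The only genuinely new ingredients are the identification $\sigma^*U_{\GMm} = \sigma^*U_{\X_0}$ and the dimension count in the reverse direction; the mild point to keep in mind is that one must remain inside the range $0 < M < N$ of the main theorem (the boundary cases being trivial) in order to apply the equivalence.
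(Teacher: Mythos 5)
Your proposal is correct and follows exactly the paper's route: the paper deduces this corollary directly from the equivalence (a) $\Leftrightarrow$ (c) of \autoref{thm:mainthm-2} (together with $m_0 = M - \dim\Y$ from \autoref{thm:mainthm-2-cor}), which is precisely what you do. The details you supply---the identification $\sigma^*U_{\GMm} = \sigma^*U_{\X_0}$, and the dimension count $\dim \sigma^*U_{\GMm} = \dim\Y + \Mm = M$ needed to place the image $X$ within the hypotheses of the main theorem---are exactly the bookkeeping the paper leaves implicit.
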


\begin{rem}
  In the case where $m=0$,
  \autoref{thm:rk-ineq}\ref{item:rk-ineq:b}
  gives the statement of the Monge-Segre-Wallace criterion \cite[(2.4)]{HK}, \cite[I-1(4)]{Kleiman1986}.
\end{rem}

\begin{rem}
  For $\X \subset \Gm$, in the diagram of \autoref{thm:defn-conormal},
  $\gamma^*\bpi$ can be inseparable even if $\gamma$ is separable.
  The reason is as follows:
  If $m = 0$,
  then $\gamma^*\bpi$ coincides with the conormal map $C(X) \rightarrow \Pv$ in the original sense.
  Then, as we mentioned in \autoref{sec:introduction},
  Kaji \cite{Kaji2003} and Fukasawa \cite{Fukasawa2006-3} \cite{Fukasawa2007}
  gave  examples of non-reflexive
  varieties (i.e., $\gamma^*\bpi$'s are inseparable) whose Gauss maps are birational.
  This implies the assertion.

  Considering the dual of the above statement,
  in the setting for \autoref{thm:mainthm-2},
  we find that
  $\sigma^*\pi_0$ can be inseparable even if $\sigma$ is separable;
  in other words, separability of $\sigma$ is not sufficient
  to give an equivalent condition for separability of the Gauss map of $X$.
\end{rem}

\section{Developable parameter spaces}
\label{sec:developability}

In this section,
we set $\pi = \pi_{\X}$ to be the projection
\[
\pi : U_{\X} := U_{\Gm}|_{\X}\rightarrow \PN
\]
for a closed subvariety $\X \subset \Gm$,
and set $X := \pi(U_{\X})$ in $\PN$.

\begin{defn}\label{thm:def-developable}
  We say that $(\X,X)$
  is \emph{developable}
  if $X = \pi(U_{\X})$ and,
  for general $x \in \X$,
  the embedded tangent space $\TT_{x'}X$ is the same
  for any smooth points $x' \in X$ lying in the $m$-plane $x \subset \PN$,
  i.e.,
  the Gauss map $\gamma_{X}$ of $X$ is constant on $x \cap X^{sm}$
  (cf. \cite[2.2.4]{FP}).
  We also say that $\X$ is \emph{developable} if
  $(\X, \pi(U_{\X}))$ is developable.
  The variety $X$ is said to be \emph{uniruled (resp. ruled)} by $m$-planes
  if $\pi$ is generically finite (resp. generically bijective).
\end{defn}

Note that,
in the case where $\gamma_{X}$ is separable,
there exists a developable parameter space $(\X,X)$ of $m$-planes with $m > 0$
if and only if the dimension of the image of $\gamma_{X}$ is less than $\dim X$;
this follows from existence of the \emph{maximal} developable parameter space (see \autoref{thm:def-X_0}).

\begin{ex}\label{thm:exa-expshr:3}
  We take $\X \subset \Gr(1, \PP^4)$ and $X \subset \PP^4$ to be the surface and $3$-fold
  given in \autoref{thm:exa-expshr} (see also \autoref{thm:exa-expshr:2}).
  Then $(\X, X)$ is developable due to \ref{thm:exa-expshr-e};
  indeed, it is maximal.
\end{ex}

\subsection{Expanding maps and developable parameter spaces}
\label{sec:expand-maps-devel}

\begin{prop}\label{thm:proj-UX-PN}
  Let $\gamma = \gamma_{\X}: \X \dashrightarrow \Gmp$ be the expanding map of $\X \subset \Gm$.
  We recall that
  $d_u\pi: T_{u} U_{\X} \rightarrow T_{u'}\PN$ is the linear map of Zariski tangent spaces
  at $u \in U_{\X}, u' = \gamma(u) \in \PN$.
  Then the following holds:
  \begin{enumerate}
  \item 
    $\rk d_{u}\pi \leq m+\dim(\X)$ and $\rk d_{u}\pi \leq \mpl$
    for general $u \in U_{\X}$.
  \item \label{thm:proj-UX-PN:R+1}
    If $\rk d_{u}\pi = m$ for general $u \in U_{\X}$,
    then $\X$ is a point.

  \item\label{thm:proj-UX-PN:b}
    Assume that
    $\pi$ is separable,
    and let $x \in \X$ be a general point.
    Then
    the $\mpl$-plane $\gamma(x) \subset \PN$ is spanned by
    $\dim(X)$-planes $\gamma_{X}(u')$ with smooth points $u' \in X$ lying in the $m$-plane $x$.
  \end{enumerate}
\end{prop}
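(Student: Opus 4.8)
The plan is to read off $d_u\pi$ from the explicit parametrization of $\pi: U_\X \to \PN$ in \autoref{thm:desc-Go}\ref{thm:desc-Go:UGm}, combined with the local coordinates on $\X$ from \autoref{sec:param-expa-maps}. Fix a general $u = (\xo, u') \in U_\X$, work in the chart $\eta^0 \neq 0$ with affine fibre coordinates $\bar\eta^k = \eta^k/\eta^0$ (and $\bar\eta^0 = 1$), parameters $z^1, \dots, z^{\dim\X}$ on $\X$, and affine coordinates $W^1, \dots, W^N$ on $\PN$ in the chart $Z^0 \neq 0$. Then \ref{eq:desc-Go:UGm} sends $u$ to $W^k = \bar\eta^k$ ($\RNi[k]$, $k\geq 1$) and $W^j = \sum_i \bar\eta^i f^j_i$ ($\RNj$), and I would compute
\begin{align*}
  d_u\pi\Bigl(\tfrac{\partial}{\partial\bar\eta^k}\Bigr)
  &= \tfrac{\partial}{\partial W^k} + \sum_{\RNj} f^j_k\,\tfrac{\partial}{\partial W^j},
  & d_u\pi\Bigl(\tfrac{\partial}{\partial z^e}\Bigr)
  &= \sum_{\RNj}\Bigl(\sum_i \eta^i f^j_{i,z^e}\Bigr)\tfrac{\partial}{\partial W^j}.
\end{align*}
The first $m$ vectors are independent because of their distinct leading terms $\partial/\partial W^k$, while the images of the $\partial/\partial z^e$ lie in the span of the $\partial/\partial W^j$ with $\RNj$; row reduction then gives $\rk d_u\pi = m + \rk M$, where $M = M(x,\eta)$ is the $\dim\X \times (N-m)$ matrix with entries $\sum_i \eta^i f^j_{i,z^e}(x)$.

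Part (a) now follows from two bounds on $\rk M$. Trivially $\rk M \leq \dim\X$, giving $\rk d_u\pi \leq m+\dim\X$. For the bound $\rk M \leq \mpl - m$, I would observe that the map $S \to T_x\X\spcheck$ sending $\zeta^j$ to the $j$-th column of $M$ is the composite of $\phi_x: S \to \Hom(T_x\X, Q)$ (see \ref{eq:repre-phi_x-zeta}) with the linear map $\Hom(T_x\X, Q) \to T_x\X\spcheck$ that contracts the $Q$-factor against the functional determined by $u'$; its image, of dimension $\rk M$, therefore has dimension at most $\dim\phi_x(S) = \mpl-m$. This yields $\rk d_u\pi \leq \mpl$. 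For part (b), if $\rk d_u\pi = m$ for general $u$ then $\rk M \equiv 0$, so $\sum_i \eta^i f^j_{i,z^e}(x) = 0$ for general $(x,\eta)$; letting $\eta$ vary forces $f^j_{i,z^e}(x) = 0$ for all $i,j,e$ at a general $x$. By \ref{eq:repre-TxX} this makes the differential of the embedding $\X \hookrightarrow \Gm$ vanish at a smooth point, which is impossible unless $\dim\X = 0$ (the same mechanism ruling out $\mpl=m$ in the lemma after Step~1). Hence $\X$ is a point.

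For part (c), assume $\pi$ separable. Then for general $u = (\xo, u')$ the point $u' = \pi(u)$ is a smooth point of $X$ and $d_u\pi$ is surjective onto the affine tangent space, so $\gamma_X(u') = \TT_{u'}X = \lin{u', \im d_u\pi}$. Working at $x = \xo$ with the normalization \ref{eq:xo}, \ref{eq:gxo}, the relations \ref{eq:f-mu-i-z-o} give $f^\mu_{i,z^e}(\xo) = 0$ for $\RNmu$, so in the formula above only the indices $\RNnu$ survive in the $\partial/\partial z^e$ images; together with $u' \in \xo$ this shows $\TT_{u'}X \subseteq \gamma(\xo)$. Finally, as $u'$ ranges over the $m$-plane $\xo$, the points $u'$ span $\xo$, and the surviving directions $\sum_{\RNnu}(\sum_i \eta^i f^\nu_{i,z^e}(\xo))\,\partial/\partial W^\nu$ span the remaining $\mpl - m$ directions of $\gamma(\xo)$: a nonzero relation $\sum_\nu c_\nu\sum_i\eta^i f^\nu_{i,z^e}(\xo) = 0$ holding for all $e,\eta$ would give $\sum_\nu c_\nu\phi_{\xo}(\zeta^\nu) = 0$, contradicting the linear independence of $\phi_{\xo}(\zeta^{m+1}), \dots, \phi_{\xo}(\zeta^{\mpl})$ recorded in Step~1. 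Hence $\lin{\TT_{u'}X} = \gamma(\xo)$, proving (c).

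The main obstacle I anticipate is bookkeeping rather than any deep input: the whole argument rests on correctly matching the evaluation/contraction against the point $u'$ with the bundle map $\phi_x$, which simultaneously drives the rank bound $\rk M \leq \mpl - m$ in (a) and, dually, the spanning statement in (c). Keeping the $Q$-factor of $\phi_x$ aligned with the fibre coordinates $\eta^i$, and tracking the coordinate normalization so that the surviving indices are exactly $\RNnu$, is where care is most needed; once the factorization through $\phi_x$ is set up cleanly, each of (a), (b), (c) reduces to a one-line linear-algebra consequence.
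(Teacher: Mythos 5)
Your proposal is correct and follows essentially the same route as the paper: the same local computation giving $\rk d_u\pi = m + \rk F_{d_u\pi}$, the same two rank bounds via \ref{eq:repre-TxX} and the factorization through $\phi_x$ for (a), the same vary-$\eta$ argument for (b), and for (c) the same two steps of containment $\gamma_X(u')\subseteq\gamma(\xo)$ via \ref{eq:f-mu-i-z-o} plus a spanning argument contradicting the linear independence of $\phi_{\xo}(\zeta^{m+1}),\dots,\phi_{\xo}(\zeta^{\mpl})$. The only cosmetic difference is that in (c) the paper normalizes the putative hyperplane to $L=(Z^{\mpl}=0)\cap\gamma(\xo)$ by a coordinate change, whereas you carry general coefficients $c_\nu$; the substance is identical.
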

To show \autoref{thm:proj-UX-PN}, we will first describe the linear map $d_{u}\pi$, as follows.
As in \autoref{thm:desc-Go}\ref{thm:desc-Go:UGm},
we have $U_{\Gom} \simeq \Gom \times \PP^{m}$.
Let $u_o = (\xo, \eta_o) \in U_{\X}$ be a general point with $\xo \in \X \cap \Gom$ and $\eta_o \in \PP^{m}$.
Changing homogeneous coordinates on $\PN$, we can assume that
$\xo = (Z^{m+1} = \dots = Z^{N} = 0) \subset \PN$ and $u_o' := \pi(u_o) = (1: 0: \cdots: 0) \in \PN$.
Then we have
\[
\eta_o = (\bar\eta^1 = \dots = \bar\eta^{m} = 0) \in \PP^{m}.
\]
We can also assume $\gamma(\xo) = (Z^{\mpl+1} = \dots = Z^{N} = 0)$.
From the expression~\ref{eq:desc-Go:UGm}, the projection  $\pi: U_{\X} \rightarrow \PN$ sends
an element $u = (x, \eta) \in U_{\X}$ near $u_o$, which is described as
$((f_i^j)_{i,j}, (\eta^0: \dots: \eta^{m}))$,
to the point
\[
\sum_{\RNi} \eta^i \cdot Z_i  + \sum_{\RNij} \eta^i f_i^j \cdot Z_j \in \PN.
\]
Let us take affine coordinates $\bar\eta^i := \eta^i/\eta^0$
on $\set{\eta^0 \neq 0} \subset \PP^{m}$, and $\bar Z^{\alpha} := Z^{\alpha}/Z^0$
on $\set{Z^0 \neq 0} \subset \PN$.
Then we regard $z^1, \dots, z^{\dim(\X)}, \bar\eta^1, \dots, \bar\eta^{m}$
as a system of regular parameters
of $\sO_{U_{\X}, u_o}$. We set $\bar\eta^0 := 1$.

Now, for general $u = (x, \eta) \in U_{\X}$ near $u_o$,
the linear map $d_u\pi: T_{u} U_{\X} \rightarrow T_{u'}\PN$ is represented by
\begin{equation}\label{eq:du-pi-TU-TPN}
  \begin{aligned}
    \frac{\partial}{\partial \bar\eta^{\bar\imath}} &\mapsto 
    \frac{\partial}{\partial \bar Z^{\bar\imath}}  + \sum_{\RNj}  f_{\bar\imath}^j \cdot \frac{\partial}{\partial \bar Z^j} & (1 \leq {\bar\imath} \leq m),
    \\
    \frac{\partial}{\partial z^{e}} &\mapsto 
    \sum_{\RNij} \bar\eta^i f_{i,z^e}^j \cdot \frac{\partial}{\partial \bar Z^j} & (\RNe).
  \end{aligned}
\end{equation}
Here the $m$ elements $d_u\pi(\partial/\partial \bar\eta^{1}), \dots, d_u\pi(\partial/\partial \bar\eta^{m})$ are linearly independent.
For a point $u \in U_{\X}$ near $u_o$ such that $u \mapsto x$ under
$U_{\X} \rightarrow \X$,
setting a $\dim(\X) \times (N-m)$ matrix
\begin{equation*}  F_{d_u\pi}:=
  \begin{bmatrix}
    \sum_i \bar\eta^i f_{i, z^{1}}^{m+1}(x) & \dots & \sum_i \bar\eta^i f_{i, z^{1}}^{N}(x)
    \\
    \vdots && \vdots
    \\
    \sum_i \bar\eta^i f_{i, z^{\dim(\X)}}^{m+1}(x) & \dots & \sum_i \bar\eta^i f_{i, z^{\dim(\X)}}^{N}(x)
  \end{bmatrix},
\end{equation*}
we have
\[
\rk d_{u}\pi = \rk F_{d_u\pi} +m.
\]

\begin{proof}[Proof of \autoref{thm:proj-UX-PN}]
  \begin{inparaenum}
  \item 
    From \ref{eq:repre-TxX} in \autoref{sec:param-expa-maps},
    we have $\rk F_{d_{u_o}\pi} \leq \dim(\X)$.
    From \ref{eq:repre-phi_x-zeta}, we have $\rk F_{d_{u_o}\pi} \leq \rk \phi_{\xo} = \mpl-m$.
    Thus the assertion follows.

  \item 
    If $\dim \X > 0$, then $f_{i,z^e}^j(\xo) \neq 0$ for some $i,j,e$.
    It follows that
    $F_{d_u\pi} \neq 0$ for some $u$ with $u \mapsto \xo$.
    This implies that
    $\rk d_u \pi > m$.

  \item
    Changing coordinates, we have that
    $\gamma(\xo) = (Z^{\mpl+1} = \dots = Z^{N} = 0)$ in $\PN$.
    Then the equality \ref{eq:f-mu-i-z-o} implies that,
    for each $u \in U_{\X}$ near $u_o$ with $u \mapsto \xo$, we have
    \[
    F_{d_u\pi}(x_o)=
    \begin{bmatrix}
      \begin{pmatrix}
        \sum_i \bar\eta^i f_{i, z^{1}}^{m+1}(\xo) & \dots & \sum_i \bar\eta^i f_{i, z^{1}}^{\mpl}(\xo)
        \\
        \vdots && \vdots
        \\
        \sum_i \bar\eta^i f_{i, z^{\dim(\X)}}^{m+1}(\xo) & \dots & \sum_i \bar\eta^i f_{i, z^{\dim(\X)}}^{\mpl}(\xo)
      \end{pmatrix}
      & \bm 0
    \end{bmatrix}.
    \]
    In addition, we recall that $f^j_i(\xo) = 0$.

    Now, we find an inclusion of linear varieties $\gamma_{X}(\pi(u)) \subset \gamma(\xo)$ in $\PN$,
    as follows:
    Considering the description~\ref{eq:du-pi-TU-TPN},
    we have that $\im(d_u\pi)$ is contained in the vector subspace of $T_{\pi(u)}\PN$
    spanned by ${{\partial}/{\partial \bar Z^{1}}}, \cdots, {{\partial}/{\partial \bar Z^{\mpl}}}$.
    Since $\pi$ is separable, $\gamma_X(\pi(u))$
    is contained in $\gamma(\xo) = (Z^{\mpl+1} = \dots = Z^{N} = 0)$.

    Suppose that there exists an $(\mpl-1)$-plane
    $L \subset \PN$ contained in the $\mpl$-plane $\gamma(\xo)$, such that
    $\gamma_{X}(u') \subset L$ holds for each smooth point $u' \in X$ lying in the $m$-plane $\xo$.
    Then we find a contradiction, as follows:
    Changing coordinates, we can assume
    \[
    L = (Z^{\mpl} = 0) \cap \gamma(\xo).
    \]
    Since $\pi$ is separable and since $\gamma_{X}(\pi(u)) \subset L$
    for each $u \in U_{\X}$ with $u \mapsto \xo$,
    considering the above matrix $F_{d_u\pi}$ and
    the description~\ref{eq:du-pi-TU-TPN} of $d_u\pi$,
    we have $f^{\mpl}_{i, z^e} (\xo) = 0$
    for each $i,e$. Then $\phi_x(\xi^{\mpl}) = 0$ due to \ref{eq:repre-phi_x-zeta}.
    This contradicts
    that a basis of the vector space $\phi_x(S)$ consists of $\phi_x(\xi^{m+1}), \cdots, \phi_x(\xi^{\mpl})$.
  \end{inparaenum}
\end{proof}

\begin{cor}
  In the setting of \autoref{thm:rk-ineq},
  if the maps $\gamma$ and $\bpi$ are separable,
  then we have $\TT_{\gamma^*\bpi(v)}Y \subset (\sigma \circ \gamma (x))^* \subset x^*$ in $\Pv$.
\end{cor}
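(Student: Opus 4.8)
The plan is to split the assertion into the two inclusions $\TT_{\gamma^*\bpi(v)}Y \subset (\sigma\circ\gamma(x))^*$ and $(\sigma\circ\gamma(x))^* \subset x^*$, the latter being routine. Since $\gamma$ is separable, \autoref{thm:im-tgamma-sigma-U0} gives the chain $x \subset \sigma\circ\gamma(x) \subset \gamma(x)$ of linear subvarieties of $\PN$; passing to the sets of hyperplanes containing them reverses the inclusions and yields $(\sigma\circ\gamma(x))^* \subset x^*$ at once. This already recovers and refines \autoref{thm:rk-ineq}\ref{item:rk-ineq:b}, so the real content is the first inclusion. I would first record that separability of $\gamma$ and of $\bpi$ forces $\gamma^*\bpi$ to be separable: the map $\gamma^*V_{\Y} \to V_{\Y}$ is the base change of $\gamma$ along the projective bundle $V_{\Y} \to \Y$, hence separable, and $\gamma^*\bpi$ is its composite with the separable $\bpi$. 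Consequently, for general $v$ the differential $d_v\gamma^*\bpi$ spans the embedded tangent space, so it suffices to show that $\im(d_v\gamma^*\bpi)$ consists of hyperplane-directions containing $\sigma\circ\gamma(x)$.

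For this first inclusion I would exploit the fibre-bundle structure $\gamma^*V_{\Y} \to \X$, whose fibre over $x$ is $\ell_x = \gamma(x)^*$ and which $\gamma^*\bpi$ maps isomorphically onto the linear subvariety $\gamma(x)^* \subset \Pv$ (as recorded just after \ref{eq:hyp-pl-defpl-2}). Thus $T_v\gamma^*V_{\Y}$ splits into a vertical part, tangent to $\ell_x$, and a horizontal part coming from moving $x$ in $\X$. The vertical directions map into $\gamma(x)^* \subseteq (\sigma\circ\gamma(x))^*$, the containment holding because $\sigma\circ\gamma(x) \subset \gamma(x)$. The horizontal directions correspond, under $\gamma^*V_{\Y} \to V_{\Y} \to \Y$, to motions of $y = \gamma(x)$ inside $\Y$; and by the very definition of the shrinking map (\autoref{thm:def-shr-map}), $\sigma(y) = \cP(\ker\Phi_y)$ is precisely the sub-plane of $y$ fixed to first order under every such motion. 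Hence a hyperplane containing $y$ and moving with $y$ continues, to first order, to contain $\sigma(y) = \sigma\circ\gamma(x)$, so the horizontal image also lies in $T_{v'}(\sigma\circ\gamma(x))^*$. Combining the two parts gives $\TT_{\gamma^*\bpi(v)}Y \subset (\sigma\circ\gamma(x))^*$.

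To make the ``fixed to first order'' step rigorous I would fall back on the explicit local description used in the proof of \autoref{thm:rk-ineq}, namely formula \ref{eq:para-dgammapi-0} evaluated at the general base point $v_o$. There both $f^j_i(\xo)=0$ and $g^\mu_\nu(\xo)=0$, so $\im(d_{v_o}\gamma^*\bpi)$ is spanned by the vectors $\partial/\partial\bar Z_{\bar\mu}$ with $\mpl+1\leq\bar\mu\leq N-1$ together with the vectors $\sum_{\nu}(\sum_\mu\bar s_\mu g^\mu_{\nu,z^e}(\xo))\,\partial/\partial\bar Z_{\nu}$ with $m+1\leq\nu\leq\mpl$. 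Identifying $\sigma\circ\gamma(\xo) = \cP(\ker\Psi_{\xo})$ via \autoref{thm:QR-subset-psi} and \ref{eq:expanding-map-repre-phi-x}, one checks that the span of these $\nu$-directions is annihilated by $\ker\Psi_{\xo}$, which translates exactly into $\TT_{v_o'}Y \subset (\sigma\circ\gamma(\xo))^*$. I expect this bookkeeping---matching the $\bar s$-weighted combination spanning the horizontal part of $d\gamma^*\bpi$ against the kernel of $\Psi_{\xo}$---to be the only delicate point; everything else follows directly from \autoref{thm:rk-ineq}, \autoref{thm:im-tgamma-sigma-U0}, and the separability of $\gamma^*\bpi$.
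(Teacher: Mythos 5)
Your proposal is correct, but it reaches the main inclusion by a genuinely different route than the paper. The paper's proof consists of two citations and no new computation: the inclusion $(\sigma\circ\gamma(x))^*\subset x^*$ comes from \autoref{thm:im-tgamma-sigma-U0} exactly as in your first paragraph, while $\TT_{\gamma^*\bpi(v)}Y\subset(\sigma\circ\gamma(x))^*$ is obtained by applying the \emph{dual} statement of \autoref{thm:proj-UX-PN}\ref{thm:proj-UX-PN:b} to $\bpi$ and the plane $\gamma(x)$: viewing $\bpi$ as the projection of the universal family of $\bar\Y\subset\Gr(N-\mpl-1,\Pv)$ (\autoref{thm:dual-exp-shr-def}), separability of $\bpi$ alone forces every tangent space $\TT_{v'}Y$ at a smooth point $v'\in Y$ lying in $\gamma(x)^*$ to be contained in (indeed, such tangent spaces span) $(\sigma\circ\gamma(x))^*$. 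So in the paper separability of $\gamma$ enters only through the second inclusion, whereas you use it twice: once there, and once to identify $\sigma\circ\gamma(\xo)$ with $\cP(\ker\Psi_{\xo})$. Your alternative --- first proving that $\gamma^*\bpi$ is separable (base change of $\gamma$ along the bundle $V_{\Y}\rightarrow\Y$, then transitivity of separably generated extensions; this is fine), then refining the computation \ref{eq:para-dgammapi-0} by retaining the $g^{\mu}_{\nu,z^e}$-terms and pairing them against $\ker\Psi_{\xo}$ --- is sound: the linear forms cutting out $(\cP\ker\Psi_{\xo})^*$ in the coordinates $\bar Z_{\alpha}$ are exactly $\sum_{\lambda}c_{\lambda}\bar Z_{\lambda}$ with $(c_{\lambda})\in\ker\Psi_{\xo}$, and your image vectors annihilate them because $\sum_{\nu}c_{\nu}g^{\mu}_{\nu,z^e}(\xo)=0$ for every $\mu$ and $e$. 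One citation should be repaired: the identification $\sigma\circ\gamma(\xo)=\cP(\ker\Psi_{\xo})$ is not given by \autoref{thm:QR-subset-psi} (which only yields $\xo\subset\cP(\ker\Psi_{\xo})$) but by \autoref{thm:kernel-rem}, and that is precisely where separability of $\gamma$ is needed; alternatively, the factorization \ref{eq:Psi-factor} gives $\ker\gamma^*\Phi\subseteq\ker\Psi$ unconditionally, hence $\sigma\circ\gamma(\xo)\subseteq\cP(\ker\Psi_{\xo})$ and therefore $(\cP\ker\Psi_{\xo})^*\subseteq(\sigma\circ\gamma(\xo))^*$, which is all your argument requires. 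As for what each approach buys: the paper's is shorter, recycles an already-proved spanning statement, and isolates the minimal hypothesis behind each inclusion; yours is self-contained at the level of the local parametrization, avoids dualizing \autoref{thm:proj-UX-PN}, and records as a by-product the separability of the generalized conormal morphism $\gamma^*\bpi$, a fact the paper never needs to state.
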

\begin{proof}
  From \autoref{thm:im-tgamma-sigma-U0}, we have $x \subset \sigma \circ \gamma (x)$ in $\PN$.
  By applying the dual statement of \autoref{thm:proj-UX-PN}\ref{thm:proj-UX-PN:b} to $\bpi$ and $\gamma(x)$,
  the inclusion $\TT_{\gamma^*\bpi(v)}Y \subset (\sigma \circ \gamma (x))^*$ holds.
\end{proof}

We have the following criterion for developability (cf. \cite[2.2.4]{FP}),
where recall that $\mpl$ is an integer given with
the expanding map $\gamma_{\X}: \X \dashrightarrow \Gmp$.

\begin{cor}\label{thm:expand-develop}
  Assume that
  $\pi$ is separable.
  Then $\dim(X) = \mpl$ if and only if
  $(\X,X)$ is developable.
  In this case,
  the following commutative diagram holds:
  \begin{equation}\label{thm:expand-develop:diagram}
    \begin{split}
      \xymatrix{        U_{\X} \ar[r]^{\pi} \ar[d] & X \ar@{-->}[d]^{\gamma_{X}}
        \\
        \X \ar@{-->}[r]_{\gamma_\X} & \Gmp \makebox[0pt]{\,.}
      }    \end{split}
  \end{equation}
\end{cor}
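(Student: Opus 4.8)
The plan is to read off both implications directly from \autoref{thm:proj-UX-PN}\ref{thm:proj-UX-PN:b}, which presents the $\mpl$-plane $\gamma_{\X}(x)$ as the span of the embedded tangent spaces $\gamma_X(u')$ taken over the smooth points $u' \in X$ lying on the $m$-plane $x$. First I would observe that separability of $\pi$ gives $\rk d_u\pi = \dim X$ for general $u \in U_{\X}$, since the image of $d_u\pi$ is then exactly $T_{\pi(u)}X$; feeding this into \autoref{thm:proj-UX-PN}(a) upgrades the a priori bound to the uniform inequality $\dim X \leq \mpl$. In particular each $\gamma_X(u')$ appearing in \autoref{thm:proj-UX-PN}\ref{thm:proj-UX-PN:b} is a $\dim(X)$-plane contained in the $\mpl$-plane $\gamma_{\X}(x)$, so the whole question reduces to comparing these two dimensions.

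For the implication $\dim X = \mpl \Rightarrow$ developable, I would argue that if $\dim X = \mpl$ then each $\gamma_X(u')$, being a $\mpl$-plane contained in the $\mpl$-plane $\gamma_{\X}(x)$, must coincide with $\gamma_{\X}(x)$. Hence $\gamma_X$ is constant on $x \cap X^{sm}$ for general $x$, which is precisely developability of $(\X,X)$ in the sense of \autoref{thm:def-developable}; moreover the common value is $\gamma_{\X}(x)$, which is exactly the commutativity of the square \ref{thm:expand-develop:diagram} (note that here $\gamma_X$ lands in $\Gr(\dim X, \PN) = \Gmp$). Conversely, if $(\X,X)$ is developable then all the $\gamma_X(u')$ coincide with a single $\dim(X)$-plane $T$; since by \autoref{thm:proj-UX-PN}\ref{thm:proj-UX-PN:b} their span equals $\gamma_{\X}(x)$, we obtain $\gamma_{\X}(x) = T$, and comparing the dimension of the $\mpl$-plane $\gamma_{\X}(x)$ with that of the $\dim(X)$-plane $T$ forces $\dim X = \mpl$.

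The argument is essentially dimension bookkeeping layered on top of \autoref{thm:proj-UX-PN}, so I do not expect a serious obstacle; the only points requiring care are the two identifications made at the outset, namely that separability of $\pi$ promotes $\rk d_u\pi \leq \dim X$ to an equality, and that the phrase ``spanned by'' in \autoref{thm:proj-UX-PN}\ref{thm:proj-UX-PN:b} already supplies the inclusions $\gamma_X(u') \subseteq \gamma_{\X}(x)$ that drive both directions. Once these are in place, the equivalence and the commutative diagram follow simultaneously from the single equality $\gamma_X(u') = \gamma_{\X}(x)$, which holds for general $u$ exactly when the two dimensions agree.
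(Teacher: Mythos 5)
Your proposal is correct and takes essentially the same route as the paper: the paper's own proof likewise reduces everything to \autoref{thm:proj-UX-PN}\ref{thm:proj-UX-PN:b}, observing that $\gamma_{\X}(x) = \gamma_X(u')$ holds exactly when the $\dim(X)$-plane $\gamma_X(u')$ has dimension $\mpl$, which is the same dimension bookkeeping you carry out. Your write-up merely spells out both directions (and the commutativity of the diagram) that the paper compresses into two lines.
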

\begin{proof}
  In \autoref{thm:proj-UX-PN}\ref{thm:proj-UX-PN:b},
  $\gamma_{\X}(x) = \gamma_X(u')$ holds if and only if
  the linear subvariety $\gamma_X(u') \subset \PN$
  is of dimension $\mpl$.
  Thus the assertion follows.
\end{proof}

In the case where $\pi$ is generically finite, $\dim (X) = \dim (\X)+m$; hence we also have:
\begin{cor}\label{thm:crit-developable}
  Assume that $\pi$ is separable and generically finite.
  Then we have $\dim(\X) = \mpl-m$ if and only if $\X$ is developable.
\end{cor}

\begin{ex}\label{thm:exa-expshr:4}
  In the setting of \autoref{thm:exa-expshr:3},
  we can also verify that $(\X, X)$ is developable by using
  \autoref{thm:crit-developable}
  (without calculation in \ref{thm:exa-expshr-e} of \autoref{thm:exa-expshr});
  this is because, we have $\mpl -m = 2$ in \ref{thm:exa-expshr-a}, which implies that
  the equality ``$\dim(\X) = \mpl-m$'' holds. In a similar way, one can show that the space $\X \subset \Gr(1, \PP^5)$ in \autoref{thm:exa2-expshr} is developable.
\end{ex}

\begin{defn}\label{thm:def-X_0}
  \mbox{}
  Let $X \subset \PN$ be a projective variety
  whose Gauss map $\gamma_{X}$ is separable.
  Then we set
  \[
  \X_0 \subset \Gmz
  \]
  to be the closure of the space which parametrizes
  (closures of) general fibers of $\gamma_X$,
  and call $(\X_0, X)$
  the \emph{maximal} developable parameter space.

  \begin{inparaenum}
  \item
    From \autoref{thm:mainthm-2-cor}, $\X_0$ can be obtained as
    the closure of the image of $X$ under the composite map
    $\sigma_{\Y} \circ \gamma_{X}$.
    In particular, the projection $\pi_0: U_{\X_0} \rightarrow X$
    is birational and the expanding map $\gamma_{\X_0}$ is birational.

  \item 
    For any developable $(\X,X)$ with $\X \subset \Gm$,
    there exists a dominant rational map $\X \dashrightarrow \X_0$
    through which $\gamma_\X: \X \dashrightarrow \Y$ factors.
    (This is because, for each $x \in \X$,
    we have an inclusion $x \cap X^{sm} \subset \gamma_X^{-1}(\gamma_{\X}(x))$ in $\PN$.
    Indeed, since $\gamma_{\X_0} \circ \sigma_\Y = id$,
    the map $\X \dashrightarrow \X_0$ is given by $\sigma_\Y\circ \gamma_\X$.)
  \end{inparaenum}
\end{defn}

\begin{rem}\label{thm:X-sep-ruled}
  Let $\X \subset \Gm$ be a subvariety such that $(\X,X)$ is developable.

  \begin{inparaenum}
  \item 
    Assume that $\pi$ is separable and generically finite,
    and assume that $\gamma_{\X}$ is generically finite.
    Then $\pi$ is indeed birational
    (i.e, $X$ is separably ruled by $m$-planes).
    The reason is as follows:
    From the diagram~\ref{thm:expand-develop:diagram},
    for general $x \in \X$,
    since $m$ is equal to the dimension of the fiber of
    $\gamma_X^{-1}(\gamma_{\X}(x))$,
    the $m$-plane $x \subset \PN$ is set-theoretically equal to an irreducible component of
    the closure of the fiber $\gamma_X^{-1}(\gamma_{\X}(x))$.
    This implies that $\pi$ is generically injective, and hence is birational.

  \item 
    Assume that $\pi$ and $\gamma_{\X}$ are separable and generically finite.
    Then $\X$ is equal to the parameter space $\X_0$
    given in
    \autoref{thm:def-X_0}.
    The reason is as follows:
    If $\gamma_{\X}$ is separable, then so is $\gamma_X$.
    It follows from \autoref{cor-mainthm} that
    the closure of the fiber
    $\gamma_X^{-1}(\gamma_{\X}(x))$ is irreducible,
    and hence is equal to the $m$-plane $x$.
    Thus $\X = \X_0$.

  \end{inparaenum}
\end{rem}

For example, in the following situation, the maximal developable parameter space for the dual variety of
$X \subset \PN$ can be obtained:

\begin{prop}\label{thm:ref-finite-proj}
  Let $\gamma_X: X \dashrightarrow \Y \subset \Gr(\dim(X),\PN)$ be the Gauss map,
  and let $Y := \bpi(V_{\Y})$ in $\Pv$, the dual variety of $X$.
  If $X$ is reflexive and $\bpi$ is generically finite,
  then $(\Y,Y)$ is the maximal developable parameter space
  with the birational projection
  $\bpi: V_{\Y} \rightarrow Y$, and then the following diagram is commutative:
  \[
  \xymatrix{    \gamma^*V_{\Y} \ar@{-->}[r] \ar[d] & V_{\Y} \ar[r]^{\bpi} \ar[d] & Y \ar@{-->}[d]^{\sigma_Y}
    \\
    X \ar@{-->}[r]^{\gamma_X} & \Y \ar@{-->}[r]^{\sigma_{\Y}} & \X_0,
  }  \]
  where note that
  the shrinking map $\sigma_Y: Y \dashrightarrow \X_0$ 
  is identified with
  the Gauss map
  $\gamma_{Y/\Pv}: Y \dashrightarrow \Gr(N-m_0-1, \Pv)$.
\end{prop}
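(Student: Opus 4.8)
The plan is to regard the dual variety $Y = \bpi(V_\Y) \subset \Pv$ as a projective variety in its own right, to run the duality machinery of \autoref{sec:comp-expand-maps} on it, and to glue the resulting square to the conormal square already produced for $X$. First I would extract separability from reflexivity: by the Monge--Segre--Wallace criterion (\autoref{thm:rk-ineq}\ref{item:rk-ineq:b} with $m=0$), reflexivity of $X$ means that the conormal morphism $\gamma^*\bpi\colon \gamma^*V_\Y \to \Pv$ is separable, and since reflexive varieties have separable Gauss maps the map $\gamma_X$ is separable as well. Because $\gamma^*\bpi$ factors through $\bpi$ via the dominant projection $\gamma^*V_\Y \to V_\Y$, separability of the composite descends to the quotient $\bpi$; thus $\bpi$ is separable, and generically finite by hypothesis.

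With $\gamma_X$ separable I may invoke \autoref{thm:mainthm-2} in case (a) together with \autoref{thm:mainthm-2-cor}. This yields $m_0 = \dim X - \dim\Y$, the birationality of the shrinking map $\sigma_\Y = \gamma_{\X_0}^{-1}\colon \Y \dashrightarrow \X_0$, and the identification of $\sigma_\Y(T)$ with the closure of the Gauss fibre $\gamma_X^{-1}(T)$; this is the left-hand square of the target diagram. Next I would pass to the dual parameter space: under \autoref{thm:dual-exp-shr-def}, the identification $\Gr(\dim X,\PN) \simeq \Gr(N-\dim X-1,\Pv)$ turns $\Y$ into a subvariety $\bar\Y \subset \Gr(N-\dim X-1,\Pv)$, turns $V_\Y$ into its universal family, turns $\bpi$ into the projection $\pi$ of the parameter space $(\bar\Y, Y)$, and turns $\sigma_\Y$ into the expanding map $\gamma_{\bar\Y}$, which is therefore birational.

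Now the developable-parameter-space lemmas apply to $(\bar\Y, Y)$. The integer attached to $\gamma_{\bar\Y}$ is $\mpl = N - m_0 - 1$, so $\mpl - (N-\dim X-1) = \dim X - m_0 = \dim\Y = \dim\bar\Y$; since $\bpi$ is separable and generically finite, \autoref{thm:crit-developable} gives that $(\bar\Y, Y)$ is developable. Then \autoref{thm:X-sep-ruled} settles two claims at once: part (i) gives that $\bpi$ is birational, and part (ii) gives that $\bar\Y$ is the maximal developable parameter space of $Y$. Hence $(\Y, Y)$ is the maximal developable parameter space and $\bpi\colon V_\Y \to Y$ is birational.

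It remains to fill in the right-hand square and to identify $\sigma_Y$. By \autoref{thm:dual-exp-shr-def} and \autoref{thm:expanding-map-Gauss-map} the shrinking map $\sigma_Y$ is the Gauss map $\gamma_{Y/\Pv}\colon Y \dashrightarrow \Gr(\dim Y, \Pv) \simeq \Gr(m_0,\PN)$, as already recorded before \autoref{thm:c-to-a-dual}. For commutativity I would take a general $(T,H) \in V_\Y$ with $T = \gamma_X(x)$ and $H \supset T$: the left-then-down route sends it to $\sigma_\Y(T) = \overline{\gamma_X^{-1}(T)}$, while the right-then-down route sends it to $\sigma_Y(H) = \TT_H Y$ read as an $m_0$-plane in $\PN$. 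The one genuinely geometric input, biduality $Y^* = X$, identifies $\TT_H Y$ with the dual of the contact locus of $H$, and for reflexive $X$ that contact locus is exactly the linear Gauss fibre $\overline{\gamma_X^{-1}(T)}$ (\autoref{cor-mainthm}); so the two routes agree, and their common image, being the maximal developable parameter space located above, is $\X_0$. The main obstacle is precisely this biduality step, together with keeping the two dual identifications (of $\sigma_\Y$ with $\gamma_{\bar\Y}$ and of $\sigma_Y$ with $\gamma_{Y/\Pv}$) and the integers $m_0, \mpl, \dim\Y, \dim Y$ mutually consistent; once the separability hypotheses are verified, the remainder is a formal assembly of \autoref{thm:mainthm-2-cor}, \autoref{thm:crit-developable}, and \autoref{thm:X-sep-ruled}.
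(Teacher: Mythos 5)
Your proposal is correct and follows essentially the same route as the paper's own proof: Monge--Segre--Wallace gives separability of $\gamma^*\bpi$ and hence of $\bpi$, \autoref{thm:mainthm-2-cor} gives $m_0=\dim X-\dim\Y$ and birationality of $\sigma_\Y$, the dimension count $\dim\bar\Y=(N-m_0-1)-(N-\dim X-1)$ feeds \autoref{thm:crit-developable} to give developability of $(\Y,Y)$, and the dual of \autoref{thm:X-sep-ruled} yields maximality and birationality of $\bpi$. The only divergence is the right-hand square, whose commutativity the paper reads off from the diagram \ref{thm:expand-develop:diagram} of \autoref{thm:expand-develop}, while you re-derive it by hand via biduality and contact loci; your argument is valid but this extra geometric input is not needed.
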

\begin{proof}
  Since $X$ is reflexive, $\gamma^*\bpi$ is separable due to the Monge-Segre-Wallace criterion,
  and so is $\bpi$.
  Let $M := \dim X$.
  Since $\dim \Y = M-m_0 = (N-m_0-1)-(N-M-1)$,
  it follows from \autoref{thm:crit-developable} that $(\Y,Y)$ is developable
  and that the diagram is commutative.
  Since $\gamma_X$ is separable, $\sigma_{\Y}$ is birational (see \autoref{thm:mainthm-2-cor}).
  Hence, considering the dual statement of \autoref{thm:X-sep-ruled}, we have the assertion.
\end{proof}

\begin{rem}\label{thm:one-dim-ref}
  Suppose that $\Y$ is of dimension one. Then $\bpi$ is always separable and generically finite
  (see \autoref{thm:curve-sep-uniruling} below).
  In this case, $X$ is reflexive if and only if $\gamma_X$ is separable.
\end{rem}

\subsection{One-dimensional developable parameter space}
\label{sec:one-dimensional-base}

In this subsection, we assume that $\X \subset \Gm$ is a projective curve.
As above, we denote by $\pi = \pi_{\X}: U_{\X} \rightarrow \PN$ the projection,
and by $X := \pi(U_{\X})$ in $\PN$.
Here separability of $\pi$ always holds;
this is deduced from \cite{Kaji1992},
and can be also shown, as follows:
\begin{lem}\label{thm:curve-sep-uniruling}
  Let $\X$ be as above. Then
  $\pi$ is separable and generically finite.
\end{lem}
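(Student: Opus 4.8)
The plan is to deduce both conclusions from the generic rank of the differential $d_u\pi$, which is already determined by the set-up of \autoref{thm:proj-UX-PN}. Since $U_{\X}\rightarrow\X$ has $m$-dimensional fibres and $\X$ is a curve, we have $\dim U_{\X}=m+1$, so it suffices to show that $\rk d_u\pi=m+1$ for general $u\in U_{\X}$. Granting this, the factorization $\pi\colon U_{\X}\rightarrow X\hookrightarrow\PN$ gives $\rk d_u\pi\leq\dim X\leq\dim U_{\X}=m+1$ for general $u$ (as $u'=\pi(u)$ is then a smooth point of $X$), whence $\dim X=\dim U_{\X}$ and $\pi$ is generically finite; moreover $\rk d_u\pi=\dim X$ for general $u$ means exactly that $d_u\pi$ is generically surjective onto $T_{u'}X$, i.e.\ that $\pi$ is separable, by the characterization of separability recalled in \autoref{sec:comp-expand-maps}.

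To compute the generic rank I would invoke \autoref{thm:proj-UX-PN} directly. Its first assertion bounds $\rk d_u\pi\leq m+\dim(\X)=m+1$, while the linear independence of the $m$ vectors $d_u\pi(\partial/\partial\bar\eta^{1}),\dots,d_u\pi(\partial/\partial\bar\eta^{m})$ noted there gives $\rk d_u\pi\geq m$ for every $u$. It remains only to exclude the generic value $m$; but if $\rk d_u\pi=m$ for general $u$, then \autoref{thm:proj-UX-PN}\ref{thm:proj-UX-PN:R+1} would force $\X$ to be a point, contradicting that $\X$ is a curve. Hence the generic rank is $m+1$, as required.

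The single point with any content is this last nonvanishing, and it is purely a consequence of $\X\hookrightarrow\Gm$ being a closed immersion. At a general (smooth) point $\xo$ the differential \ref{eq:repre-TxX} of the immersion is injective, so $\sum_{\RNij}f^{j}_{i,z^{1}}(\xo)\,\eta^i\otimes\zeta_j\neq0$ and some $f^{j_0}_{i_0,z^{1}}(\xo)\neq0$. Then the $j_0$-th entry $\sum_i\bar\eta^{i}f^{j_0}_{i,z^{1}}(\xo)$ of the matrix $F_{d_u\pi}$ is a nonzero affine-linear form in the fibre coordinates $\bar\eta^{1},\dots,\bar\eta^{m}$, hence is nonzero for general $\eta$; thus $F_{d_u\pi}\neq0$ and $\rk d_u\pi=\rk F_{d_u\pi}+m\geq m+1$ for general $u$. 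This is precisely the computation appearing in the proof of \autoref{thm:proj-UX-PN}\ref{thm:proj-UX-PN:R+1}, so citing that item is the most economical presentation.

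I do not expect a genuine obstacle here: the hard work---the explicit parametrization of $d_u\pi$ and the formula $\rk d_u\pi=\rk F_{d_u\pi}+m$---is inherited from \autoref{thm:proj-UX-PN}. The only care needed is with quantifiers: one must verify $F_{d_u\pi}\neq0$ for a \emph{general} $u$, not merely for some $u$. This holds because, for each general $\xo$, the locus of bad $\eta$ in the fibre $\PP^{m}$ is the proper closed subset cut out by the nonzero form $\sum_i\bar\eta^{i}f^{j_0}_{i,z^{1}}(\xo)$, so the bad locus in $U_{\X}$ is a proper closed subset.
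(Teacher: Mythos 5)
Your proposal is correct and follows essentially the same route as the paper: the paper's proof likewise notes $\dim U_{\X}=m+1$, invokes \autoref{thm:proj-UX-PN}\ref{thm:proj-UX-PN:R+1} to rule out generic rank $m$ (hence $\rk d_u\pi\geq m+1$ since the rank is always at least $m$), and concludes $\rk d_u\pi=m+1$, which gives separability and generic finiteness. Your additional unpacking---the deduction of the two conclusions from full rank, and the nonvanishing of $F_{d_u\pi}$ for general $u$---just makes explicit what the paper delegates to \autoref{thm:proj-UX-PN} and to semicontinuity of rank.
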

\begin{proof}
  Note that $U_{\X}$ is of dimension $m+1$.
  Since $\X$ is a curve,
  it follows from \autoref{thm:proj-UX-PN}\ref{thm:proj-UX-PN:R+1} that $\rk d_u \pi \geq m+1$.
  Thus $\rk d_u \pi = m+1$, which implies that
  $\pi$ is separable and generically finite.
\end{proof}

Let us consider the expanding map $\gamma: \X \dashrightarrow \Gr(\mpl_{\gamma}, \PN)$ 
and shrinking map $\sigma: \X \dashrightarrow \Gr(\mm_{\sigma}, \PN)$ of $\X$,
with integers $\mpl_{\gamma}$ and $\mm_{\sigma}$ ($\mm_{\sigma} < m < \mpl_{\gamma}$).

\begin{lem}\label{thm:curve-2m-lem}
  $\mpl_{\gamma}+\mm_{\sigma} = 2m$.
\end{lem}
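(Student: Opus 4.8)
The plan is to show that both $\mpl_{\gamma}-m$ and $m-\mm_{\sigma}$ are equal to the rank of one and the same object attached to the (one-dimensional) tangent space of $\X$ at a general point, so that the two numbers coincide and the asserted equality $\mpl_{\gamma}+\mm_{\sigma}=2m$ follows at once. First I would fix a general smooth point $\xo\in\X$ and work in the standard open subset $\Gom\subset\Gm$ of \autoref{thm:desc-Go}, parametrizing $\X$ around $\xo$ by a single regular parameter $z$ with functions $f^j_i$ (so $f^j_i(\xo)=0$). Since $\X$ is a curve, $T_{\xo}\X$ is one-dimensional, and by \ref{eq:repre-TxX} its image under $T_{\xo}\X\hookrightarrow T_{\xo}\Gm=Q\otimes S\spcheck$ is spanned by the single tensor
\[
w:=\sum_{\RNij} f^j_{i,z}(\xo)\cdot \eta^i\otimes\zeta_j .
\]
The key observation is that the expanding homomorphism and the shrinking homomorphism are obtained from $w$ by contracting in the two different tensor slots.

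Concretely, by \ref{eq:repre-phi_x-zeta} the map $\phi_{\xo}$ sends $\zeta^j\mapsto\bigl(\sum_i f^j_{i,z}(\xo)\,\eta^i\bigr)\otimes dz$; that is, after the identification $\Hom(T_{\xo}\X,Q)\cong Q$ given by the basis $dz$ of $T_{\xo}\X\spcheck$, the map $\phi_{\xo}$ is exactly $w$ regarded as the homomorphism $S\to Q$ determined by $w\in Q\otimes S\spcheck=\Hom(S,Q)$. Hence $\rk\phi_{\xo}=\rk w$, and as recorded after \ref{eq:repre-phi_x-zeta} this equals $\mpl_{\gamma}-m$. Performing the analogous computation for $\Phi$ on the $\sQ\spcheck$-side (or, equivalently, invoking the identification $\sigma_{\X/\Gm}=\gamma_{\bar\X/\Gr(N-m-1,\Pv)}$ of \autoref{thm:dual-exp-shr-def}), one finds that $\Phi_{\xo}$ sends $\eta_i\mapsto\bigl(\sum_j f^j_{i,z}(\xo)\,\zeta_j\bigr)\otimes dz$, i.e. $\Phi_{\xo}$ is $w$ regarded as the homomorphism $Q\spcheck\to S\spcheck$ determined by $w\in Q\otimes S\spcheck=\Hom(Q\spcheck,S\spcheck)$. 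Thus $\rk\Phi_{\xo}=\rk w$ as well; and since $\dim(\ker\Phi_{\xo})=\mm_{\sigma}+1$ while $\sQ\spcheck$ has rank $m+1$, this rank is $m-\mm_{\sigma}$.

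The two homomorphisms $S\to Q$ and $Q\spcheck\to S\spcheck$ induced by $w$ are mutually transpose, so they share the same rank; equivalently, both ranks equal that of the matrix $(f^j_{i,z}(\xo))_{i,j}$ and of its transpose. Therefore $\mpl_{\gamma}-m=\rk w=m-\mm_{\sigma}$, which rearranges to $\mpl_{\gamma}+\mm_{\sigma}=2m$. The one point requiring care, and the step I would treat as the main obstacle, is verifying that the shrinking homomorphism $\Phi_{\xo}$ really is the second contraction of the very tensor $w$ appearing in $\phi_{\xo}$, since the text records an explicit local formula for $\phi$ but not for $\Phi$. I would settle this either by repeating the derivation leading to \ref{eq:repre-phi_x-zeta} verbatim on the $\sQ\spcheck$-side, or via the duality of \autoref{thm:dual-exp-shr-def}, under which the universal sub- and quotient bundles exchange roles and $\phi$ passes precisely to the transpose map; once this identification is in place, the equality of ranks for a curve is immediate because the tangent direction contributes only the single tensor $w$.
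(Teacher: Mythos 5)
Your proof is correct and is essentially the paper's own argument: the paper works with the matrix $F=(f^j_{i,z})_{i,j}$, notes via \ref{eq:repre-phi_x-zeta} that $\rk F=\dim(\phi_x(S))=\mpl_{\gamma}-m$, and asserts ``in the same way'' that $\rk F=m-\mm_{\sigma}$ --- which is precisely your observation that $\phi_{\xo}$ and $\Phi_{\xo}$ are the two mutually transpose contractions of the single tensor $w$ spanning the image of the one-dimensional tangent space. The step you flag as the main obstacle is exactly what the paper leaves implicit, and either of your proposed ways to settle it (redoing the derivation on the $\sQ\spcheck$-side, or invoking \autoref{thm:dual-exp-shr-def}) is sound.
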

\begin{proof}
  In the setting of \autoref{sec:param-expa-maps}, we consider the matrix
  \[
  F =
  \begin{bmatrix}
    f_{0,z}^{m+1} & \dots & f_{0,z}^{N}
    \\
    \vdots && \vdots
    \\
    f_{m,z}^{m+1} & \dots & f_{m,z}^{N}
  \end{bmatrix},
  \]
  where note that, since $\X$ is of dimension one,
  the system of parameters of $\sO_{\X,x}$ consists of one element $z$.
  Recalling the formula~\ref{eq:repre-phi_x-zeta},
  we have $\rk F = \dim(\phi_x(S)) = \mpl_{\gamma}-m$.
  In the same way, we have $\rk F = m-\mm_{\sigma}$.
  Thus the assertion follows.
\end{proof}

\begin{cor}\label{thm:curve-developability}
  The following are equivalent:
  \begin{enumerate}
  \item $\X$ is developable.
  \item $\mpl_{\gamma} = m+1$.
  \item $\mm_{\sigma} = m-1$.
  \end{enumerate}
\end{cor}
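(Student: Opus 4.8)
The plan is to deduce all three equivalences directly from the two results established immediately above, namely \autoref{thm:curve-sep-uniruling} and \autoref{thm:curve-2m-lem}, combined with the developability criterion \autoref{thm:crit-developable}. The key structural observation is that, since $\X$ is a projective curve, \autoref{thm:curve-sep-uniruling} guarantees that the projection $\pi$ is separable and generically finite; this means the standing hypothesis of \autoref{thm:crit-developable} is automatically in force, and it is exactly this that unlocks the whole argument. I would therefore organize the proof as a short chain (a) $\Leftrightarrow$ (b) $\Leftrightarrow$ (c).

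First I would prove (a) $\Leftrightarrow$ (b). Because $\pi$ is separable and generically finite, \autoref{thm:crit-developable} applies verbatim and tells us that $\X$ is developable if and only if $\dim(\X) = \mpl_{\gamma} - m$. Since $\X$ is a curve, $\dim(\X) = 1$, so the right-hand condition reads $\mpl_{\gamma} - m = 1$, that is, $\mpl_{\gamma} = m+1$. This yields (a) $\Leftrightarrow$ (b) at once, with no additional computation.

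Next I would establish (b) $\Leftrightarrow$ (c) using \autoref{thm:curve-2m-lem}, which asserts $\mpl_{\gamma} + \mm_{\sigma} = 2m$. Substituting $\mpl_{\gamma} = m+1$ gives $\mm_{\sigma} = 2m - (m+1) = m-1$, and conversely $\mm_{\sigma} = m-1$ forces $\mpl_{\gamma} = m+1$. Hence (b) $\Leftrightarrow$ (c), which completes the equivalence of all three conditions.

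I do not anticipate any substantive obstacle: the corollary is essentially a formal consequence of the structural results already proved for curves. The one point that genuinely requires checking is that the separability-and-generic-finiteness hypothesis needed to invoke \autoref{thm:crit-developable} actually holds; but for a curve $\X$ this is precisely the statement of \autoref{thm:curve-sep-uniruling}, so no extra argument is necessary and the two-step chain above suffices.
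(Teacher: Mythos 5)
Your proof is correct and follows exactly the paper's own route: (a) $\Leftrightarrow$ (b) via \autoref{thm:crit-developable} (with its hypothesis supplied by \autoref{thm:curve-sep-uniruling}, which the paper leaves implicit from the surrounding discussion), and (b) $\Leftrightarrow$ (c) via the identity $\mpl_{\gamma}+\mm_{\sigma}=2m$ of \autoref{thm:curve-2m-lem}. Your explicit verification of the separability and generic-finiteness hypothesis is a welcome bit of care, but otherwise the argument is the same.
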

\begin{proof}
  The equivalence (a) $\Leftrightarrow$ (b) follows from \autoref{thm:crit-developable}.
  The equivalent (b) $ \Leftrightarrow $ (c) follows from \autoref{thm:curve-2m-lem}.
\end{proof}

Recall that $\gamma^*\bpi$ is the generalized conormal morphism
given in \autoref{thm:defn-conormal}.

\begin{lem}\label{thm:gamma-iff-gammabarpi}
  Assume that $\gamma$ is generically finite.
  Then, $\gamma$ is separable if and only if so is
  $\gamma^*\bpi: \gamma^*V_{\Gr(\mpl_{\gamma}, \PN)} \rightarrow \Pv$.
\end{lem}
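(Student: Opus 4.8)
The plan is to reduce the statement to a rank computation at a general point, using crucially that $\dim(\X)=1$. Write $\Y\subset\Gmp$ for the closure of $\gamma(\X)$ and $Y:=\gamma^*\bpi(\gamma^*V_\Y)\subset\Pv$ for the image of the generalized conormal morphism. First I would record that $\dim Y=N-\mpl$. Indeed, $\gamma^*V_\Y$ is a $\PP^{N-\mpl-1}$-bundle over the curve $\X$, hence irreducible of dimension $N-\mpl$, and on each fibre $\ell_x$ the morphism $\gamma^*\bpi$ restricts to an isomorphism onto the $(N-\mpl-1)$-plane $\gamma(x)^*$ (this is built into $V_{\Gm}=\cP(\sS_{\Gm})$ being the universal family of $\Gr(N-\mpl-1,\Pv)$). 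Since $\gamma$ is generically finite and $\X$ is a curve, $\Y$ is a curve and $\gamma$ is non-constant, so the dual plane $\gamma(x)^*$ genuinely moves with general $x$. If $\dim Y$ were only $N-\mpl-1$, then for general $x$ the irreducible $\gamma(x)^*\subseteq Y$ would have the same dimension as the irreducible $Y$, forcing $\gamma(x)^*=Y$, hence (by duality) $\gamma(x)$ constant --- a contradiction. Thus $\dim Y=N-\mpl=\dim\gamma^*V_\Y$, so $\gamma^*\bpi$ is generically finite and is separable if and only if $\rk d_v\gamma^*\bpi=N-\mpl$ for general $v$.

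Next I would translate the separability of each map into the nonvanishing of the functions $g^{\mu}_{\nu,z}$ from \autoref{sec:param-expa-maps}. Because $\X$ and $\Y$ are both curves and $\gamma$ is dominant, $\gamma$ is separable exactly when $\rk d_x\gamma=1$ for general $x$; by \autoref{thm:rk-d-gamma-and-psi}\ref{thm:rk-d-gamma-and-psi:a} with $\dim(\X)=1$ this single row has rank $1$ if and only if $g^{\mu}_{\nu,z}(x)\neq0$ for some $\mu,\nu$. For the conormal morphism I would invoke the identity $\rk d_v\gamma^*\bpi=(N-\mpl-1)+\rk G_{d_v\gamma^*\bpi}$ established just before \autoref{thm:rk-ineq}, where for a curve $G_{d_v\gamma^*\bpi}$ is the $1\times(\mpl-m)$ row whose $\nu$-entry is the linear form $\sum_{\mu}\bar s_{\mu}\,g^{\mu}_{\nu,z}(x)$ in the fibre coordinates $\bar s_{\mu}$.

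Then the equivalence falls out. If $\gamma$ is separable, fix a general $x$ with $g^{\mu_0}_{\nu_0,z}(x)\neq0$; the $\nu_0$-entry of $G_{d_v\gamma^*\bpi}$ is a nonzero linear form in $\bar s$, so for a general point $v=(x,\bar s)$ of $\gamma^*V_\Y$ we get $G_{d_v\gamma^*\bpi}\neq0$, hence $\rk d_v\gamma^*\bpi=N-\mpl=\dim Y$, i.e. $\gamma^*\bpi$ is separable. Conversely, if $\gamma$ is inseparable, then $g^{\mu}_{\nu,z}(x)=0$ for all $\mu,\nu$ and general $x$, so $G_{d_v\gamma^*\bpi}=0$ for every such $v$ and $\rk d_v\gamma^*\bpi=N-\mpl-1<\dim Y$, whence $\gamma^*\bpi$ is inseparable. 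Equivalently, one may read both inequalities off \autoref{thm:rk-ineq}\ref{item:rk-ineq:a} and \ref{item:rk-ineq:rk0} rather than re-examining $G_{d_v\gamma^*\bpi}$.

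The main obstacle is the dimension count $\dim Y=N-\mpl$ in the first step. The rank estimates of \autoref{thm:rk-ineq} immediately tie $\rk d_v\gamma^*\bpi$ to $\rk d_x\gamma$, but passing from ``$\rk d_v\gamma^*\bpi=N-\mpl-1$'' to ``$\gamma^*\bpi$ inseparable'' requires knowing that $Y$ attains the full dimension $N-\mpl$; without this, $\gamma^*\bpi$ could a priori be a separable map onto a lower-dimensional $Y$. So the real content is the duality argument showing that the moving family of dual planes $\gamma(x)^*$ sweeps out a variety of dimension one more than a single fibre.
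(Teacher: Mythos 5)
Your proof is correct, but it follows a genuinely different route from the paper's. The paper disposes of this lemma by a factorization: generic finiteness of $\gamma$ makes $\Y$ a curve, so the dual version of \autoref{thm:curve-sep-uniruling} already gives that $\bpi\colon V_{\Y}\to\Pv$ is separable and generically finite; since $\gamma^*\bpi$ factors as $\gamma^*V_{\Y}\dashrightarrow V_{\Y}\xrightarrow{\;\bpi\;}\Pv$, and the first arrow is a $\PP^{N-\mpl-1}$-bundle map over $\gamma$ (hence separable exactly when $\gamma$ is), the equivalence follows. Note that this factorization argument really uses that $\bpi$ is generically finite and not merely separable: separability of a composite does not in general descend to the top factor (e.g.\ $k(x)\supset k(x^p)\supset k$), but it does when the bottom extension $K(V_{\Y})/K(Y)$ is finite separable. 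Your route replaces all of this by explicit rank computations on $\gamma^*V_{\Y}$ itself: the moving-planes count giving $\dim Y=N-\mpl$ (so that $\gamma^*\bpi$ is generically finite), then the identity $\rk d_v\gamma^*\bpi=(N-\mpl-1)+\rk G_{d_v\gamma^*\bpi}$ combined with \autoref{thm:rk-d-gamma-and-psi}\ref{thm:rk-d-gamma-and-psi:a}, which for a curve reduces both separabilities to the (non)vanishing of the functions $g^{\mu}_{\nu,z}$; in effect you specialize \autoref{thm:rk-ineq} to the one-dimensional case, and you are right that the dimension count $\dim Y = N-\mpl$ is the one point not already supplied by that proposition. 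What the paper's proof buys is brevity and reuse of an existing lemma; what yours buys is that generic finiteness of $\gamma^*\bpi$ and the transfer of separability are made explicit at the level of differentials, with no appeal to the field-theoretic facts hidden in the paper's ``hence the assertion follows.''
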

\begin{proof}
  For $\Y \subset \Gr(\mpl_{\gamma}, \PN)$, the closure of the image of $\gamma$,
  we set $Y \subset \Pv$ to be the image of $V_{\Y}$ under $\bpi$.
  Since $\Y$ is of dimension one, $\bpi$ is separable and generically finite,
  due to \autoref{thm:curve-sep-uniruling}. Hence the assertion follows.
\end{proof}

Considering the dual of the above statement, we also have:

\begin{cor}\label{thm:sigma-iff-sigmapi}
  Assume that $\sigma$ is generically finite.
  Then $\sigma$ is separable if and only if so is $\sigma^*\pi: \sigma^*U_{\Gr(\mm_{\sigma}, \PN)} \rightarrow \PN$.
\end{cor}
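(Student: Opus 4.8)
The plan is to obtain this statement as the literal dual of \autoref{thm:gamma-iff-gammabarpi} under the Grassmann duality of \autoref{thm:dual-exp-shr-def}, in the same spirit as the earlier corollaries were deduced from their lemmas. First I would pass to the dual projective space: under the identification $\Gm \simeq \Gr(N-m-1,\Pv)$ the curve $\X$ corresponds to a curve $\bar\X \subset \Gr(N-m-1,\Pv)$, and by \autoref{thm:dual-exp-shr-def} the shrinking map $\sigma = \sigma_{\X/\Gm}$ is identified with the expanding map $\gamma_{\bar\X/\Gr(N-m-1,\Pv)} : \bar\X \dashrightarrow \Gr(N-\mm_{\sigma}-1,\Pv)$, where the target is read off via $\Gr(\mm_{\sigma},\PN) \simeq \Gr(N-\mm_{\sigma}-1,\Pv)$. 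Since these are literally the same rational map viewed through an isomorphism of the source and target varieties, $\sigma$ is generically finite (resp. separable) if and only if $\gamma_{\bar\X}$ is. In particular the generic-finiteness hypothesis of \autoref{thm:sigma-iff-sigmapi} transfers to the hypothesis of \autoref{thm:gamma-iff-gammabarpi} applied to the curve $\bar\X$.

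Next I would match the two conormal-type morphisms. Applying \autoref{thm:gamma-iff-gammabarpi} to $\bar\X \subset \Gr(N-m-1,\Pv)$, whose ambient projective space is $\Pv$ with dual $(\Pv)\spcheck = \PN$, gives that $\gamma_{\bar\X}$ is separable if and only if the generalized conormal morphism $\gamma_{\bar\X}^*\bpi : \gamma_{\bar\X}^* V_{\Gr(N-\mm_{\sigma}-1,\Pv)} \rightarrow (\Pv)\spcheck = \PN$ is separable. It then remains to identify this with $\sigma^*\pi$. By \autoref{thm:defn-conormal}, $V_{\Gr(N-\mm_{\sigma}-1,\Pv)} = \cP(\sS_{\Gr(N-\mm_{\sigma}-1,\Pv)})$ is the universal family of $\Gr(N-(N-\mm_{\sigma}-1)-1,(\Pv)\spcheck) = \Gr(\mm_{\sigma},\PN)$, sitting inside $\Gr(N-\mm_{\sigma}-1,\Pv)\times(\Pv)\spcheck$; under $\Gr(N-\mm_{\sigma}-1,\Pv) \simeq \Gr(\mm_{\sigma},\PN)$ and $(\Pv)\spcheck = \PN$ this is exactly $U_{\Gr(\mm_{\sigma},\PN)} \subset \Gr(\mm_{\sigma},\PN) \times \PN$, with $\bpi$ becoming the projection $\pi$ to $\PN$. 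Hence $\gamma_{\bar\X}^*\bpi$ is identified with $\sigma^*\pi$, and stringing together the three equivalences yields ``$\sigma$ separable $\iff$ $\sigma^*\pi$ separable.''

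The one delicate point, which I expect to be the main obstacle, is the bookkeeping in this last identification: passing to the double dual, the conormal family $\cP(\sS)$ of the dual Grassmann $\Gr(N-\mm_{\sigma}-1,\Pv)$ must be recognized as the tautological point-incidence family $U$ of $\Gr(\mm_{\sigma},\PN)$ rather than its $V$-family, because a hyperplane of $\Pv$ containing a given $(N-\mm_{\sigma}-1)$-plane is the same datum as a point of $\PN$ lying on the corresponding $\mm_{\sigma}$-plane. Once this incidence-reversal is set straight, $\bpi$ and $\pi$ are interchanged and the argument closes; all separability and generic-finiteness statements transfer without extra work, since each translation is an isomorphism of varieties and not a genuinely new morphism.
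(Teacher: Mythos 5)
Your proposal is correct and is essentially the paper's own argument: the paper derives this corollary in one line by "considering the dual of" \autoref{thm:gamma-iff-gammabarpi}, i.e., exactly the identification $\sigma_{\X/\Gm} = \gamma_{\bar\X/\Gr(N-m-1,\Pv)}$ of \autoref{thm:dual-exp-shr-def} together with the recognition that the $V$-family of the dual Grassmannian is the $U$-family of $\Gr(\mm_{\sigma},\PN)$ under the double-dual identification $(\Pv)\spcheck = \PN$. Your write-up just makes explicit the bookkeeping (including the incidence reversal $V \leftrightarrow U$, $\bpi \leftrightarrow \pi$) that the paper leaves implicit.
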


\begin{rem}\label{thm:curve-X-sep-ruled}
  \mbox{}
  \begin{inparaenum}
  \item If $(\X,X)$ is developable and $\gamma$ is generically finite,
    then $\pi$ is birational, due to \autoref{thm:curve-sep-uniruling} and \autoref{thm:X-sep-ruled}.
    Moreover, if $\gamma$ is separable, then we have $\X = \X_0$.

  \item 
    If $X \subset \PN$ is non-degenerate and is not a cone, then it follows from \autoref{thm:nondege-notcone}
    that $\gamma$ and $\sigma$ are generically finite.
  \end{inparaenum}
\end{rem}

Recall that $\gamma^i$ and $\sigma^i$ are composite maps given in
\autoref{sec:dual-one-dimens}.
We denote by
$T X = T^{1} X := \overline{\bigcup_{x \in X^{sm}} \TT_xX} \subset \PN$, the \emph{tangent variety},
and by $T^0X := X$, $T^iX := T(T^{i-1}X)$.

\begin{thm}\label{thm:curve-gamma-sigma}
  Let $\X \subset \Gm$ and ${\X'} \subset \Gr(m', \PN)$ be projective curves
  with projections $\pi_{\X}: U_{\X} \rightarrow X$ and $\pi_{\X'}: U_{\X'} \rightarrow X'$.
  Then, for an integer $\epsilon \geq 0$, the following are equivalent:
  \begin{enumerate}
  \item
    $({\X'},X')$ is developable, 
    $\gamma^{\epsilon} = \gamma^{\epsilon}_{\X'}$ is separable,
    $\gamma^{\epsilon}{\X'} = \X$,
    and $X'$ is non-degenerate and is not a cone.
  \item\label{item:curve-gamma-sigma:b}
    $({\X},X)$ is developable,
    $\sigma^{\epsilon} = \sigma^{\epsilon}_{\X}$ is separable,
    $\sigma^{\epsilon}\X = {\X'}$,
    and $X$ is non-degenerate and is not a cone.
  \end{enumerate}
  In this case, $m = m'+\epsilon$ and $X = T^{\epsilon}X'$.
\end{thm}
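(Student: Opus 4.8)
The plan is to induct on $\epsilon$, reducing to a single-step duality between the expanding map of a developable curve and the shrinking map of its image. By \autoref{thm:dual-exp-shr-def}, which interchanges $\gamma$ with $\sigma$, $\X$ with $\X'$, and $m$ with $m'$, the conditions (a) and (b) are exchanged, so it is enough to handle one direction of the single step. For $\epsilon = 0$ the maps $\gamma^0, \sigma^0$ are the identity, both conditions give $\X = \X'$, $m = m'$ and $X = T^0 X' = X'$, and the equivalence is trivial. Setting $\X'_i := \gamma^i \X'$, the equality $\gamma^\epsilon \X' = \X$ exhibits $\gamma^\epsilon \colon \X' \dashrightarrow \X$ as a dominant map of curves, hence generically finite; so is each factor $\gamma_{\X'_i}$, and every $\X'_i$ is again a curve.

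The heart is the following single step: if $\X' \subset \Gr(m',\PN)$ is a developable curve with $X'$ non-degenerate and not a cone and $\gamma = \gamma_{\X'}$ separable, then $\X := \gamma\X'$ is a developable curve, $\sigma_\X$ is separable and generically finite with $\sigma_\X \X = \X'$, and $X = TX'$ is non-degenerate and not a cone, with $m = m'+1$. Indeed $\pi_{\X'}$ is separable and generically finite by \autoref{thm:curve-sep-uniruling}, and developability gives $\mpl_\gamma = m'+1$ by \autoref{thm:curve-developability}, so $m = m'+1$. As $\gamma$ is separable and generically finite, \autoref{thm:gamma-iff-gammabarpi} makes $\gamma^*\bpi$ separable, and for a curve base the generic finiteness of $\gamma$ forces $\rk d_v \gamma^*\bpi$ to be maximal via \autoref{thm:rk-ineq}\ref{item:rk-ineq:rk0}, so its image $Y$ has dimension $N-m'-1$. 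Thus \autoref{thm:sigma-gamma-id} applies and gives $\mm_{\sigma_\X} = m'$ and $\sigma_\X \circ \gamma = \mathrm{id}$ on an open set; in particular $\sigma_\X \X = \X'$, and since the identity of a curve has nonzero differential, the factorization $\mathrm{id} = \sigma_\X \circ \gamma$ forces $d\sigma_\X \neq 0$, so $\sigma_\X$ is separable (and generically finite, being dominant between curves). As $\mm_{\sigma_\X} = m' = m-1$, \autoref{thm:curve-developability} shows $\X$ developable, and \autoref{thm:expand-develop} identifies $X = \pi_\X(U_\X)$ with $\bigcup_{x' \in X'} \TT_{x'} X' = TX'$, which is non-degenerate since $X' \subseteq X$.

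The one delicate point is that $X = TX'$ is not a cone, and I expect this cone-propagation to be the main obstacle. I would argue by projective duality: a non-degenerate $Z$ is a cone precisely when $Z^*$ is degenerate, and the tangent variety of a degenerate variety is degenerate. One has $(TX')^* \subseteq (X')^*$ directly, while the converse relation $(X')^* = T\bigl((TX')^*\bigr)$ is furnished, in this one-dimensional setting, by the generalized conormal morphism of \autoref{thm:defn-conormal} and the tangency statement \autoref{thm:rk-ineq}\ref{item:rk-ineq:b}. Were $X = TX'$ a cone, $X^* = (TX')^*$ would be degenerate, forcing $(X')^* = T(X^*)$ into the same hyperplane, so that $X'$ would be a cone, a contradiction. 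The care here is to extract the needed instance of $(X')^* = T(X^*)$ from the conormal machinery of \autoref{sec:imag-separ-gauss-1} rather than from the later corollary, so as not to argue circularly.

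Granting the single step, the induction closes by bookkeeping. Assume (a) for $\epsilon$. Since all $\X'_i$ are curves, $d\gamma^\epsilon = d\gamma^{\epsilon-1}_{\X'_1} \cdot d\gamma$ is a product of maps of one-dimensional tangent spaces, so separability of $\gamma^\epsilon$ gives separability of $\gamma$ and of $\gamma^{\epsilon-1}_{\X'_1}$. The single step applied to $\X'$ makes $\X'_1 = \gamma\X'$ a developable curve whose associated variety $TX'$ is non-degenerate and not a cone, with $\sigma_{\X'_1}\X'_1 = \X'$; the induction hypothesis applied to $\X'_1$ with $\epsilon - 1$ then yields $\X$ developable, $\sigma^{\epsilon-1}_\X$ separable with $\sigma^{\epsilon-1}_\X \X = \X'_1$, $X$ non-degenerate and not a cone, $m = m'+\epsilon$, and $X = T^{\epsilon-1}(TX') = T^\epsilon X'$. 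Finally $\sigma^\epsilon_\X = \sigma_{\X'_1} \circ \sigma^{\epsilon-1}_\X$, so $\sigma^\epsilon_\X \X = \sigma_{\X'_1}\X'_1 = \X'$ and separability composes along curves; this is (b). The implication (b) $\Rightarrow$ (a) is the dual under \autoref{thm:dual-exp-shr-def}.
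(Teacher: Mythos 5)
Your single step is, up to the cone condition, a correct mirror image of the paper's own argument: the paper reduces to $\epsilon=1$ and proves (b)\,$\Rightarrow$\,(a) explicitly (via \autoref{thm:curve-developability}, \autoref{thm:nondege-notcone}, \autoref{thm:sigma-iff-sigmapi} and \autoref{thm:mainthm-2-cor}), leaving the converse to symmetry, whereas you prove (a)\,$\Rightarrow$\,(b) from the same toolkit (\autoref{thm:curve-sep-uniruling}, \autoref{thm:gamma-iff-gammabarpi}, \autoref{thm:rk-ineq}, \autoref{thm:sigma-gamma-id}, \autoref{thm:expand-develop}) and leave (b)\,$\Rightarrow$\,(a) to duality. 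But you chose the direction in which the cone condition is the genuinely hard part, and your treatment of it fails in positive characteristic, which is the paper's setting. The criterion ``a non-degenerate $Z$ is a cone precisely when $Z^*$ is degenerate'' is false there: $Z^*$ degenerate only says that all embedded tangent spaces of $Z$ pass through a fixed point, i.e.\ that $Z$ is \emph{strange}, and strange varieties need not be cones (the smooth conic in characteristic $2$ is strange, has degenerate dual, and is not a cone). You invoke exactly the false direction when you pass from ``$(X')^*$ degenerate'' to ``$X'$ is a cone''. Moreover, the identity $(X')^* = T\bigl((TX')^*\bigr)$ you rely on is \autoref{thm:one-dim-dual-tan}, which the paper \emph{deduces from} the theorem being proved, and it cannot be extracted from \autoref{thm:rk-ineq}\ref{item:rk-ineq:b} at this stage: applying that tangency statement to $X^*=(TX')^*$ requires the conormal morphism of $\X$, equivalently (by \autoref{thm:gamma-iff-gammabarpi}) the map $\gamma^2_{\X'}$, to be separable, whereas hypothesis (a) with $\epsilon=1$ gives separability of $\gamma^1$ only.

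The gap can be closed with tools you have already verified, but along a different line. If $X=TX'$ were a cone with vertex $v$, projecting from $v$ and using non-degeneracy of $X$ shows that every $m$-plane $x\in\X$ contains $v$ (otherwise $X$ would be the $(m+1)$-plane $\lin{v,\pi_v(x)}$; here one needs $\dim X<N$, as in the paper's later applications). Hence $(X')^*=\overline{\bigcup_{x\in\X}x^*}$, which is precisely the image $Y$ of the generalized conormal morphism of $\X'$, lies in the hyperplane $v^*$. By \autoref{thm:rk-ineq}\ref{item:rk-ineq:b} together with the dimension count --- that is, by \autoref{thm:c-to-a-dual}, whose hypotheses you established --- the tangent spaces of $Y$ at general points are exactly the planes $(x')^*$, $x'\in\X'$; these then lie in $v^*$, so every $x'$ contains $v$ and $X'$ is a cone, a contradiction. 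Separately, your closing appeal to \autoref{thm:dual-exp-shr-def} for (b)\,$\Rightarrow$\,(a) is not purely formal: that remark identifies $\sigma$ with $\gamma$, but you must still check that ``developable'' and ``non-degenerate and not a cone'' for $X=\pi_\X(U_\X)$ correspond to the same conditions for the dual union $\overline{\bigcup_x x^*}$. Developability does correspond, by the numerical criterion of \autoref{thm:curve-developability}, but the cone/degeneracy exchange is once again the issue above. The paper sidesteps all of this by proving (b)\,$\Rightarrow$\,(a), the direction in which the cone condition propagates trivially ($X'$ a cone implies $X=TX'$ a cone).
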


\begin{proof}
  (b) $ \Rightarrow $ (a):
  It is sufficient to show the case $\epsilon = 1$.
  Since ${\X}$ is developable, it follows from \autoref{thm:curve-developability}
  that $m'=m_{\sigma}^{-}$ is equal to $m-1$.
  From \autoref{thm:nondege-notcone}, $\sigma_{\X}$ is generically finite.
  From \autoref{thm:sigma-iff-sigmapi}, $\sigma_{\X}^*\pi$ is separable.
  Applying \autoref{thm:mainthm-2-cor},
  we have that
  $\sigma_{\X} \circ \gamma_{\X'}$ gives an identity map of an open subset of ${\X'}$,
  and that ${\X'}$ is developable.
  In addition,
  $\X$ is equal to the image of the Gauss map $\gamma_{X'}$;
  hence the image of  $U_{\X} \rightarrow \PN$ is equal to $TX'$,
  which means that $X = TX'$.

  The converse (a) $ \Rightarrow $ (b) follows
  in the same way.
\end{proof}

In the statement of (a) of \autoref{thm:curve-gamma-sigma},
if $m'=0$ and $C:=\X' \subset \PN$,
then we regard $C$ itself as a developable parameter space (of $0$-planes).

We denote by $\Tan^{(i)}C$
the \emph{osculating scroll} (= \emph{osculating developable}) of order $i$ of a curve $C \subset \PN$
(see \cite[p.~76]{FP}, \cite[Definition~1.4]{Homma}, \cite[\textsection 3]{Piene1976}, for definition).
Here, $\Tan^{(1)}C = T^1C$ holds.
It is known that $\Tan^{(i)}C$ coincides with $T^{i}C$ 
if the characteristic is zero or satisfies some conditions (Homma \cite[\textsection 2]{Homma}).

\begin{cor}\label{thm:one-para=oscu}
  Assume one of the conditions \textnormal{(a)} and \textnormal{(b)} of \autoref{thm:curve-gamma-sigma},
  and assume that $m' = 0$, i.e., $C := {\X'}$ is a curve in $\PN$.
  Then the following holds:

  \begin{inparaenum}
    \setcounter{enumi}{2}

  \item 
    $C = \sigma^{m}\X$ and $\X = \gamma^{m}C$;
    in particular,
    $X = T^{m}C$.

  \item $T^{i}C = \Tan^{(i)}C$ for $0 < i \leq m+1$.

  \item 
    If $\gamma_{\X}$ is separable (equivalently, so is $\gamma_X$) and $m+1 < N$,
    then $\X$ is the closure of the space parametrizing general fibers of $\gamma_X$.
  \end{inparaenum}
\end{cor}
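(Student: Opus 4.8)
The plan is to handle the three assertions (c)--(e) separately, using throughout that \autoref{thm:curve-gamma-sigma} applies to $\X'=C$ with $m'=0$, so that $\epsilon=m$. Assertion (c) I expect to be immediate: since one of conditions (a), (b) is assumed and they are equivalent, both hold, and reading off their last clauses gives $\gamma^{m}C=\gamma^{\epsilon}\X'=\X$ and $\sigma^{m}\X=\sigma^{\epsilon}\X=\X'=C$, while the closing line of the theorem yields $X=T^{\epsilon}X'=T^{m}C$ (note $X'=\pi_{\X'}(U_{\X'})=C$ because $C\subset\Gr(0,\PN)=\PN$).

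For (d) I would show, by induction on $i$ in the range $0<i\le m+1$, that $\gamma^{i}C\subset\Gr(i,\PN)$ is exactly the parameter space of osculating $i$-planes of $C$. Granting this, $\Tan^{(i)}C$ is by definition the image in $\PN$ of the universal family over that space, so $\Tan^{(i)}C=\pi(U_{\gamma^{i}C})$; and $\pi(U_{\gamma^{i}C})=T^{i}C$ is provided by the chain in \autoref{thm:curve-gamma-sigma} and the $\epsilon=1$ iteration of its proof (with base value $\pi(U_C)=C$), the top case $i=m+1$ giving $\pi(U_{\gamma^{m+1}C})=TX=T^{m+1}C$ set-theoretically through developability of $\X$ and the diagram of \autoref{thm:expand-develop}. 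The base case $i=1$ is the recorded fact $\Tan^{(1)}C=T^{1}C$, with $\gamma C$ the family of tangent lines. For the inductive step I would invoke the local description of the expanding map from \autoref{sec:param-expa-maps}, by which $\gamma$ sends the moving osculating $(i-1)$-plane to the $i$-plane spanned by it and its first-order variation, i.e.\ the osculating $i$-plane; developability of $\gamma^{i-1}C$ (equivalently $\mpl=i$, via \autoref{thm:curve-developability}) ensures this plane is honestly $i$-dimensional.

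The hard part will be justifying the inductive step of (d) in positive characteristic. The osculating scroll $\Tan^{(i)}C$ is defined through jets (Hasse--Schmidt derivatives), while $T^{i}C$ is assembled from iterated first-order tangents, and the two can genuinely diverge when Frobenius-type degeneracies intervene --- this is exactly why Homma needs conditions on the characteristic in \cite{Homma}. What I must check is that the separability of $\gamma,\gamma^{2},\dots,\gamma^{m}$ (which descends from the separability of $\gamma^{m}$ supplied by condition (a), itself available through the equivalence with (b)) forces each expanding step to produce the genuine next osculating direction rather than a collapsed one, so that $\gamma^{i}C$ really is the osculating-$i$-plane parameter space. The index $i=m+1$ needs slight extra care, since separability of the $(m+1)$-st map is not among the hypotheses; there I would rely on the developability of $\X=\gamma^{m}C$ and on the set-theoretic identity $\pi(U_{\gamma^{m+1}C})=TX=T^{m+1}C$ rather than on separability.

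For (e) I would appeal to the structural results for developable curves. Under (a) or (b), $(\X,X)$ is developable and $X$ is non-degenerate and not a cone, so $\gamma_{\X}$ is generically finite by \autoref{thm:nondege-notcone}, and $\pi$ is separable and generically finite by \autoref{thm:curve-sep-uniruling}, hence birational by \autoref{thm:curve-X-sep-ruled}. Birationality of $\pi$ makes separability of $\gamma_{\X}$ and of $\gamma_{X}$ equivalent via the diagram of \autoref{thm:expand-develop}; assuming $\gamma_{\X}$ separable, \autoref{thm:curve-X-sep-ruled} then gives $\X=\X_0$, the maximal developable parameter space of \autoref{thm:def-X_0}. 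The role of the hypothesis $m+1<N$ is only to guarantee $\dim X=m+1<N$, so that $X\subsetneq\PN$ and the Gauss map $\gamma_X$, and with it $\X_0$, is genuinely defined.
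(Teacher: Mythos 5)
Your parts (c) and (e) are correct and essentially coincide with the paper's proof: (c) is read off directly from \autoref{thm:curve-gamma-sigma}, and (e) is exactly the appeal to \autoref{thm:curve-X-sep-ruled} (you spell out, via birationality of $\pi$ and the diagram \ref{thm:expand-develop:diagram}, the equivalence of separability of $\gamma_{\X}$ and $\gamma_X$ that the paper leaves implicit; that elaboration is sound).

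The genuine gap is in (d). Your plan is to prove, by induction and the local parametrization of \autoref{sec:param-expa-maps}, that $\gamma^{i}C$ is precisely the parameter space of osculating $i$-planes of $C$, and you yourself flag the inductive step as ``what I must check'' --- namely that separability forces each expanding step to produce the genuine next osculating direction rather than a collapsed one. That step is never carried out, and it is exactly the hard point: in positive characteristic the osculating scroll is defined via Hasse--Schmidt jets, while the expanding map only records first-order derivatives of the moving plane, and bridging the two is in essence the content of Homma's theorems, not something the separability hypotheses hand you for free. The paper does not attempt this direct comparison at all. Instead it argues through \emph{reflexivity}: for $0 \leq i < m$, the diagram \ref{thm:expand-develop:diagram} of \autoref{thm:expand-develop} shows that $\gamma_{T^iC} : T^iC \dashrightarrow \gamma^{i+1}C$ is separable (here separability of each single-step expanding map $\gamma_{\gamma^iC}$ does follow from separability of the composite $\gamma^m$, because all the spaces involved are curves, so the relevant field extensions are finite and separability passes to all intermediate pieces); since the image of this Gauss map is one-dimensional, \autoref{thm:one-dim-ref} converts its separability into reflexivity of $T^iC$; and then \cite[Corollary~2.3 and Theorem~3.3]{Homma} yield inductively $T^{i+2}C = \Tan^{(i+2)}C$. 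Together with the base case $\Tan^{(1)}C = T^1C$ this covers the whole range $0 < i \leq m+1$, including $i = m+1$, which in your approach required the separate ad hoc argument you sketch. To repair your proof you must either actually establish your claimed identification of $\gamma^iC$ with the osculating-plane space in characteristic $p$ (a substantial task, essentially reproving Homma's results) or reroute the argument through reflexivity of the intermediate tangent varieties as the paper does.
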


In the case where $X \subset \PN$ is a cone with maximal vertex $L$,
considering the linear projection from $L$
and using \autoref{thm:one-para=oscu},
we have that $X$ is a cone over an osculating scroll of order $m-\dim(L)-1$ of a certain curve
in $\PP^{N-\dim(L)-1}$
if $(\X,X)$ is developable and $\sigma^{m-\dim(L)-1}$ is separable.

\begin{proof}[Proof of \autoref{thm:one-para=oscu}]
  \begin{inparaenum}
    \setcounter{enumi}{2}

  \item The statement follows from \autoref{thm:curve-gamma-sigma};
    in particular, $\gamma^m$ is separable, $\X = \gamma^mC$, and $X = T^mC$.
    
  \item 
    For $0 \leq i < m$, it follows 
    from the diagram \ref{thm:expand-develop:diagram} of \autoref{thm:expand-develop} that
    $\gamma_{T^iC} : T^iC \dashrightarrow \gamma^{i+1}C$ is separable;
    then
    $T^iC$ is reflexive as in \autoref{thm:one-dim-ref}.
    Inductively, $T^{i+2}C = \Tan^{(i+2)}C$ follows from
    \cite[Corollary~2.3 and Theorem~3.3]{Homma}.
    
  \item If $\gamma_{\X}$ is separable, then $\X = \X_0$ as in \autoref{thm:curve-X-sep-ruled}.
  \end{inparaenum}
\end{proof}

Let us consider $X^* \subset \Pv$, the dual variety of $X \subset \PN$.
Then we have the following relation with dual varieties and tangent varieties.
\begin{cor}\label{thm:one-dim-dual-tan}
  Let $m, \epsilon$ be integers with $\epsilon > 0$ and $m+\epsilon+1 < N$,
  let $X \subset \PN$ be a non-degenerate projective variety of dimension $m+1$,
  and let $\X \subset \Gm$ be a projective curve such that $(\X,X)$ is developable.
  If $\gamma^{\epsilon+1} = \gamma^{\epsilon+1}_{\X}$ is separable,
  then we have $T^{\epsilon}((T^{\epsilon}X)^*) = X^*$ in $\Pv$.
\end{cor}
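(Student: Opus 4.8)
The plan is to write both sides of the asserted equality as the image under the projection $\pi$ of a universal family over one and the same parameter curve in a dual Grassmannian, using the two bookkeeping devices already in place: the tangent variety of a developable curve-parametrized variety is governed by the expanding map (\autoref{thm:expand-develop}), while passing to the dual variety is governed by the bar-correspondence $\Gm\simeq\Gr(N-m-1,\Pv)$ of \autoref{thm:dual-exp-shr-def}, which interchanges $\gamma$ and $\sigma$. Throughout I set $\X^{(0)}:=\X$ and $\X^{(i)}:=\gamma^{i}_{\X}(\X)\subset\Gr(m+i,\PN)$, and I write $\overline{(\,\cdot\,)}$ for the curve in the dual Grassmannian corresponding under \autoref{thm:dual-exp-shr-def}.

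First I would build the developable tower $T^{i}X=\pi(U_{\X^{(i)}})$ for $0\le i\le\epsilon$. Since $\X$ is a projective curve, $\pi$ is automatically separable and generically finite (\autoref{thm:curve-sep-uniruling}), so \autoref{thm:expand-develop} applies at each stage: developability of $(\X^{(i)},T^{i}X)$ forces $\mpl=m+i+1$ (\autoref{thm:curve-developability}), the Gauss map of $T^{i}X$ factors through $\gamma_{\X^{(i)}}$, and hence $T^{i+1}X=\pi(U_{\X^{(i+1)}})$. That each next stage is again developable, with separable expanding map, is supplied by repeated application of \autoref{thm:curve-gamma-sigma}; separability of the composite $\gamma^{\epsilon+1}_{\X}$ is inherited by every factor $\gamma_{\X^{(i)}}$, because a subextension of a separably generated field extension is separably generated. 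To meet the ``non-degenerate and not a cone'' hypothesis of \autoref{thm:curve-gamma-sigma}, I would dispose of the cone case by projecting from the maximal vertex, exactly as in the paragraph following \autoref{thm:one-para=oscu}.

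Next I would compute the dual of a developable variety. For a developable $(\mathcal Z,V)$ with $\mathcal Z\subset\Gr(k,\PN)$ a curve, a hyperplane is tangent to $V$ at a smooth point precisely when it contains one of the tangent $(k+1)$-planes $\gamma_{\mathcal Z}(z)$; hence $V^{*}=\pi(U_{\overline{\mathcal Z^{(1)}}})$, where $\mathcal Z^{(1)}:=\gamma_{\mathcal Z}(\mathcal Z)\subset\Gr(k+1,\PN)$ is the Gauss image, so $\overline{\mathcal Z^{(1)}}\subset\Gr(N-k-2,\Pv)$. Moreover $(\overline{\mathcal Z^{(1)}},V^{*})$ is again developable: under the bar-correspondence, developability of the tangent variety $TV$ (from the tower above) becomes the statement that $\gamma_{\overline{\mathcal Z^{(1)}}}=\sigma_{\mathcal Z^{(1)}}$ carries $\overline{\mathcal Z^{(1)}}$ back to $\overline{\mathcal Z}$. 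Applying this with $V=X$ yields $X^{*}=\pi(U_{\overline{\X^{(1)}}})$, and with $V=T^{\epsilon}X$ yields $(T^{\epsilon}X)^{*}=\pi(U_{\overline{\X^{(\epsilon+1)}}})$, developable over the curve $\mathcal B_{0}:=\overline{\X^{(\epsilon+1)}}\subset\Gr(N-m-\epsilon-2,\Pv)$, where the hypothesis $m+\epsilon+1<N$ guarantees nonempty fibers.

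Finally I would iterate the tangent construction $\epsilon$ times inside $\Pv$, starting from $(T^{\epsilon}X)^{*}$. Because expanding in $\Pv$ is shrinking in $\PN$ (\autoref{thm:dual-exp-shr-def}), and because $\sigma_{\X^{(i)}}\circ\gamma_{\X^{(i-1)}}=\mathrm{id}$ under the separability at hand (\autoref{thm:mainthm-2-cor}), the parameter curve of $T^{j}\bigl((T^{\epsilon}X)^{*}\bigr)$ is $\mathcal B_{j}=\overline{\X^{(\epsilon+1-j)}}$; in particular $\mathcal B_{\epsilon}=\overline{\X^{(1)}}$, so $T^{\epsilon}\bigl((T^{\epsilon}X)^{*}\bigr)=\pi(U_{\overline{\X^{(1)}}})=X^{*}$, as claimed. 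The main obstacle I anticipate is not this final index-matching but the first step: ensuring that developability and separability propagate through all $2\epsilon$ stages of the tangent/dual tower in positive characteristic, and handling the cone case so that \autoref{thm:curve-gamma-sigma} is genuinely applicable at each stage; once that is secured, the $\gamma\leftrightarrow\sigma$ duality collapses both sides onto the common curve $\overline{\X^{(1)}}$.
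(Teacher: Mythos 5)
Your proposal follows essentially the same route as the paper's proof: reduce to the non-cone case, realize the tangent tower $T^{i}X$ as $\pi(U_{\gamma^{i}\X})$ via \autoref{thm:curve-gamma-sigma} and \autoref{thm:expand-develop}, identify each dual $(T^{i}X)^{*}$ with the image of $V_{\gamma^{i+1}\X}\rightarrow\Pv$ (your $\pi(U_{\overline{\X^{(i+1)}}})$), and then use the $\gamma\leftrightarrow\sigma$ duality together with $\sigma^{\epsilon}(\gamma^{\epsilon+1}\X)=\gamma^{1}\X$ to collapse both sides onto the image of $V_{\gamma^{1}\X}\rightarrow\Pv$, exactly as the paper does. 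The one loose point is your justification of separability of each factor $\gamma_{\X^{(i)}}$: MacLane's subextension property alone only controls the bottom of a tower, so you should instead note that non-degeneracy forces every $\gamma_{\X^{(i)}}$ to be generically finite (\autoref{thm:nondege-notcone}), whence all extensions are finite and separability of the composite passes to every step --- a point the paper leaves equally implicit.
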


\begin{proof}
  We can assume that $X$ is not a cone.
  By definition, $X$ is the image of $U_{\X} \rightarrow \PN$.
  From \autoref{thm:curve-gamma-sigma},
  $T^{\epsilon}X$ is given by the image of
  $U_{\gamma^{\epsilon}\X} \rightarrow \PN$.
  Let $\Y := \gamma^{\epsilon+1}\X$,
  and let $Y$ be the image of $V_{\Y} \rightarrow \Pv$.
  Since $\sigma^{\epsilon} = \sigma^{\epsilon}_{\Y}$ is separable,
  considering the dual of the above statement,   we have that
  ${T^{\epsilon}Y}$ is given by the image of $V_{\sigma^{\epsilon}\Y} \rightarrow \Pv$.
  On the other hand, for each $0 \leq i \leq \epsilon$,
  from the diagram~\ref{thm:expand-develop:diagram},
  since $\gamma^{i+1}\X$ is equal to the image of
  the Gauss map $\gamma_{T^{i}X}$,
  the dual variety $(T^{i}X)^*$ is given by the image of $V_{\gamma^{i+1}\X} \rightarrow \Pv$
  (see \autoref{thm:defn-conormal}, \autoref{thm:ref-finite-proj}).
  In particular, $Y = (T^{\epsilon}X)^*$.
  From \autoref{thm:curve-gamma-sigma}, it follows that ${\sigma^{\epsilon}\Y} = \gamma^{1}\X$.
  Hence
  $T^{\epsilon}(Y)$ and $X^*$ coincide, since these
  are given by the image of $V_{\gamma^1\X} \rightarrow \Pv$.
\end{proof}

\vspace{1ex}

\end{document}